\numberwithin{equation}{section}
\begin{document}
\title{Elastic anisotropy of nematic liquid crystals in the two-dimensional Landau-de Gennes model}
\author{Yucen Han$^{1}$}
\author{Joseph Harris$^{1}$}
\author{Lei Zhang$^{2}$}
\author{Apala Majumdar$^{1}$}
\affiliation{$^1$ Department of Mathematics and Statistics, University of Strathclyde, G1 1XQ, United Kindom.\\
$^2$Beijing International Center for Mathematical Research, Center for Quantitative Biology, Peking University, Beijing 100871, China.}


\newtheorem{theorem}{Theorem}[section]
\newtheorem{proposition}{Proposition}[section]
\newtheorem{corollary}{Corollary}[section] 
\newtheorem{definition}{Definition}
\newtheorem{lemma}[proposition]{Lemma}

\theoremstyle{remark}
\newtheorem{step}{Step}
\newtheorem{remark}{Remark}

\theoremstyle{remark}
\newtheorem*{rem}{Remark}
\renewcommand{\thefigure}{\arabic{section}.\arabic{figure}}
\renewcommand{\thetable}{\arabic{section}.\arabic{table}}
\renewcommand{\theequation}{\arabic{section}.\arabic{equation}}

\setcounter{section}{0}

\newcommand{\R}{{\mathbb R}}
\newcommand{\D}{\mathrm{D}} 
\renewcommand\d{\mathrm{d}}
\newcommand\dd{\mathrm{d}}
\newcommand\nvec{\mathbf{n}}
\newcommand\e{\mathbf{e}}
\newcommand\m{\mathbf{m}}
\newcommand\p{\mathbf{p}}
\newcommand\vvec{\mathbf{v}}
\newcommand\rvec{\mathbf{r}}
\newcommand\w{\mathbf{w}}
\newcommand\x{\mathbf{x}}
\newcommand\xhat{\hat{\mathbf{x}}}
\newcommand\yhat{\hat{\mathbf{y}}}
\newcommand\zhat{\hat{\mathbf{z}}}
\newcommand\Qvec{\mathbf{Q}}
\newcommand\Gvec{\mathbf{H}}
\newcommand\Pvec{\mathbf{P}}
\newcommand\Ivec{\mathbf{I}}
\newcommand\E{\mathbf{E}}
\newcommand\pp{\partial}
\newcommand\tr{\mathrm{tr}}
\newcommand\Div{\mathrm{div}}
\newcommand\grad{\mathrm{grad}}

\newcommand{\abs}[1]{\left|#1\right|}

\newcommand\BBB{\color{blue}}

\begin{abstract}
    We study the effects of elastic anisotropy on the Landau-de Gennes critical points for nematic liquid crystals, in a square domain. The elastic anisotropy is captured by a parameter, $L_2$, and the critical points are described by three degrees of freedom. We analytically construct a symmetric critical point for all admissible values of $L_2$, which is necessarily globally stable for small domains i.e., when the square edge length, $\lambda$, is small enough. We perform asymptotic analyses and numerical studies to discover at least $5$ classes of these symmetric critical points - the $WORS$, $Ring^{\pm}$, $Constant$ and $pWORS$ solutions, of which the $WORS$, $Ring^+$ and $Constant$ solutions can be stable. Furthermore, we demonstrate that the novel $Constant$ solution is energetically preferable for large $\lambda$ and large $L_2$, and prove associated stability results that corroborate the stabilising effects of $L_2$ for reduced Landau-de Gennes critical points. We complement our analysis with numerically computed bifurcation diagrams for different values of $L_2$, which illustrate the interplay of elastic anisotropy and geometry for nematic solution landscapes, at low temperatures.
\end{abstract}
\pacs{}

\maketitle
\section{Introduction}
\label{sec:intro}
Nematic liquid crystals (NLCs) are quintessential examples of partially ordered materials that combine fluidity with the directionality of solids \cite{prost1995physics}. The nematic molecules are typically asymmetric in shape e.g., rod- or disc-shaped, and these molecules tend to align along certain locally preferred averaged directions, referred to as \emph{nematic directors} in the literature. Consequently, NLCs have a degree of long-range orientational order and direction-dependent physical, optical and rheological properties. It is precisely this anisotropy that makes them the working material of choice for a range of electro-optic devices such as the multi-billion dollar liquid crystal display industry \cite{lagerwallreview,wang2021modeling}.

There has been substantial recent interest in multistable NLC systems i.e., NLCs, confined to two-dimensional (2D), or three-dimensional (3D) geometries that can support multiple stable states without any external electric fields \cite{robinson2017molecular, kusumaatmaja2015free, luo2012multistability, canevari2017order, canevariharrismajumdarwang,han2019transition,yin2020construction}. Multistable NLC systems offer new prospects for device technologies, materials technologies, self-assembly processes and hydrodynamics. This paper is motivated by a bistable system reported in \cite{tsakonas2007multistable}. Here, the authors experimentally and numerically study NLCs inside periodic arrays of 3D wells, with a square cross-section, such that the well height is typically much smaller than the square cross-sectional length. Furthermore, the authors speculate that the structural characteristics only vary in the plane of the square cross-section, and are translationally invariant along the well-height, effectively reducing this to a 2D problem. Hence, the authors restrict attention to the bottom square cross-section of the well geometry, where the square edge length is denoted by $\lambda$ and is typically on the micron scale. The choice of boundary conditions is crucial for any confined NLC system and, in \cite{tsakonas2007multistable}, the authors impose tangent boundary conditions (TBCs) on the well surfaces i.e., the nematic directors, in the plane of the well surfaces, are constrained to be tangent to those well surfaces. However, this necessarily means that the nematic director has to be tangent to the square edges, creating defects at the vertices where the director is not defined. The authors observe two classes of stable NLC states in this geometry: the diagonal $D$ states, for which the nematic director aligns along one of the square diagonals and; the rotated $R$ states, for which the director rotates by $\pi$ radians between a pair of opposite square edges.

In \cite{kralj2014order}, \cite{luo2012multistability}, the authors model this square system within the celebrated continuum Landau-de Gennes (LdG) theory for NLCs. The LdG theory describes the state of nematic anisotropy by a macroscopic order parameter - the $\Qvec$-tensor order parameter \cite{prost1995physics}. From an experimental perspective, the $\Qvec$-tensor is measured in terms of NLC responses to external electric or magnetic fields, which are necessarily anisotropic in nature. Mathematically, the $\Qvec$-tensor order parameter is a symmetric, traceless, $3\times 3$ matrix whose eigenvectors represent the special directions of preferred molecular alignment, and the corresponding eigenvalues measure the degree of orientational order about these directions; more details are given in the next section. The nematic director can be identified with the eigenvector that has the largest positive eigenvalue. For a square domain with TBCs on the square edges, it suffices to work in a reduced LdG framework where the $\Qvec$-tensor only has three degrees of freedom, $q_1, q_2, q_3$. The degree of nematic order in the plane is captured by $q_1$ and $q_2$, whereas $q_3$ measures the out-of-plane order, such that positive (negative) $q_3$ implies that the nematic director lies out of the plane (in the plane) of the square, respectively. The TBCs naturally constrain $q_3$ to be negative on the square edges, but $q_3$ could be positive in the interior, away from the square edges, for energetic reasons. The LdG theory is a variational theory i.e., experimentally observable states can be modelled by local or global minimizers of an appropriately defined LdG free energy. In the simplest setting, the LdG energy has two contributions - a bulk energy that only depends on the eigenvalues of the $\Qvec$-tensor, and an elastic energy that penalises spatial inhomogeneities of the $\Qvec$-tensor. In these papers, the authors work with low temperatures that favour an ordered nematic state for a spatially homogeneous system i.e., the bulk energy attains its minimum at an ordered nematic state for low temperatures, and attains its minimum at a disordered isotropic phase for high temperatures. The elastic energy is typically a quadratic and convex function of $\nabla \Qvec$ and, in \cite{kralj2014order}, \cite{luo2012multistability}, the authors work with an isotropic elastic energy - the Dirichlet elastic energy. In a reduced LdG setting, the authors recover the stable $D$ and $R$ states for large $\lambda$ and, in \cite{kralj2014order}, they discover a novel stable Well Order Reconstruction Solution ($WORS$) for small $\lambda$. The $WORS$ is special because it exhibits a pair of mutually orthogonal defect lines, with no planar nematic order, along the square diagonals as will be described in Section~\ref{sec:qualitative}. In \cite{han2020pol}, the authors generalise this work to arbitrary 2D regular polygons and, in \cite{canevariharrismajumdarwang}, the authors study 3D wells, with an emphasis on novel mixed solutions which interpolate between two distinct $D$ solutions on the top and bottom well surfaces.

In this paper, we study the same problem of NLCs on a square domain with TBCs on the square edges, with an anisotropic elastic energy as opposed to the isotropic energy studied in \cite{kralj2014order} and \cite{luo2012multistability}. Notably, we take the elastic energy density to be $w(\nabla \Qvec) = | \nabla \Qvec |^2 + L_2 \left(\textrm{div} \Qvec \right)^2$, where $-1 < L_2$ is the anisotropy parameter. Physically speaking, positive $L_2$ implies that splay and twist deformations of the nematic director are energetically expensive compared to out-of-plane twist deformations i.e., we would expect the physically observable states to have positive $q_3$ in the square interior as $L_2$ increases. Therefore, we would expect to see competing effects of the TBCs on the square edges, which prefer in-plane  director orientation, and the preferred out-of-plane director orientation in the square interior, for larger values of $L_2$. We construct a critical point of the LdG energy, for any $L_2> -1$, for which $q_1 = 0$ on the square diagonals and $q_2=0$ on the coordinate axes such that the axes are parallel to the square edges. This symmetric critical point is globally stable for $\lambda$ small enough. The $WORS$ is a special case of this symmetric critical point with $q_2 \equiv 0$ on the square domain, for $L_2 = 0$. For $L_2 \neq 0$, this class of symmetric critical points cannot have $q_2$ identically zero on the domain, and this destroys the perfect cross symmetry of the $WORS$. We perform an asymptotic analysis for small $\lambda$ and small $L_2$, about the $WORS$, which is the globally stable branch in this regime for $L_2 = 0$. The anisotropy has a first order effect on $q_3$ i.e., $q_3$ is perturbed linearly by $L_2$, whereas $q_1$ and $q_2$ exhibit quadratic perturbations. We show that $q_3$ increases at the square centre for positive $L_2$, relative to its value for $L_2 = 0$, corroborating the trend of increasing $q_3$ with increasing $L_2$. The globally stable symmetric critical point for small $\lambda$ and small $L_2$, labelled as the $Ring^+$ solution, effectively smoothens out the $WORS$ and exhibits a stable central $+1$-degree point defect. We perform formal calculations to show that as $L_2 \to \infty$, energy minimizers (and consequently the symmetric critical point described above for small $\lambda$) approach the $Constant$ state with $(q_1, q_2, q_3) = (0, 0, s_+/3)$ away from the square edges. The special choice of $q_3 = s_+/3$ stems from energy minimality and the choice of TBCs on the square edges. Thus, there are three different classes of the symmetric critical point discussed above: the $WORS$, which only exists for $L_2= 0$; the $Ring^+$ solution, which only ceases to exist for $L_2$ large enough, and can be stable for moderate values of $\lambda$ and non-zero $L_2$ and; the $Constant$ solution, which exists for $L_2$ large enough and is always stable according to our heuristics and numerical calculations. Additionally, we also find two unstable classes of this symmetric critical point, both of which exist for moderate values of $\lambda$ and $L_2$. These are the $Ring^-$ solution which exhibits a central $-1$-degree point defect, and the novel $pWORS$ which exhibits an oscillating sequence of nematic point defects along the square diagonals. We provide asymptotic approximations for the novel $pWORS$ solution branch.

Whilst most of our work is restricted to the small $\lambda$-limit, we also provide rigorous results for energy minimizers in the $\lambda \to \infty$ limit. The competitors in the large $\lambda$-limit are the familiar $D$ and $R$ states, and the $Constant$ solution. Using Gamma-convergence arguments, we show that the $Constant$ solution has lower energy than the $D$ and $R$ states, for large enough $L_2$. We complement our analysis with numerical computations of bifurcation diagrams for five different values of $L_2$. As $L_2$ increases, we observe that the unique minimizer for small $\lambda$ changes from the $WORS$ ($L_2 = 0$), to the $Ring^+$ and then to the $Constant$ solution. As $L_2$ increases further, the $Ring^+$ and $Constant$ solutions retain stability in the reduced framework, over an increasing range of $\lambda$. We further prove this by performing an analysis of the corresponding second variation of the reduced LdG energy. It is interesting to note that whilst the $WORS$ and $Ring^+$ solution branches are connected to the $D$ and $R$ states, the $Constant$ solution branch appears to be disconnected from the $D$ and $R$ solution branches. Our notable findings concern the response of the NLC solution landscape for this model problem, to the elastic anisotropy $L_2$. We report (i) novel stable states ($Ring^+$ and $Constant$) for small $\lambda$, and (ii) enhanced multistability in the large $\lambda$-limit due to the competing $Constant$ and $Ring^+$ states, for large $L_2$. As $L_2$ increases, we expect that there are further, not necessarily energy-minimizing, LdG critical points with positive $q_3$, or out-of-plane nematic directors. Furthermore, $L_2$ has a stabilising effect with respect to certain classes of perturbations: planar perturbations and out-of-plane perturbations, and so we expect enhanced multistability as $L_2$ increases, for all values of $\lambda$.

A lot of open questions remain with regards to the interplay between $L_2$, $\lambda$ and temperature on NLC solution landscapes, but our work is an informative forward step in this direction. Our paper is organised as follows. We provide all the mathematical preliminaries in Section~\ref{sec:prelim}. We construct the symmetric critical points described above and prove their global stability for small $\lambda$ in Section~\ref{sec:qualitative}. In Section~\ref{sec:asymptotics}, we perform separate asymptotic studies in the small $\lambda$ and small $L_2$ limit; large $L_2$ limit; large $\lambda$-limit. In Section~\ref{sec:bifurcations}, we present bifurcation diagrams for the solution landscapes with five different values of $L_2$, accompanied by some rigorous stability results. We conclude with some perspectives in Section~\ref{sec:conclusions}. 

\section{Preliminaries}
\label{sec:prelim}
In this section, we review the Landau-de Gennes (LdG) continuum theory of nematic liquid crystals. Within this  framework, the nematic state is described by a macroscopic LdG order parameter - the $\Qvec$-tensor order parameter. The $\Qvec$-tensor is a symmetric, traceless, $3\times3$ matrix, which is a macroscopic measure of the nematic anisotropy. The eigenvectors of $\Qvec$ represent the averaged directions of preferred molecular alignment and the corresponding eigenvalues measure the degree of order about these eigen-directions. The $\Qvec$-tensor is said to be: (i) isotropic if $\Qvec=0$; (ii) uniaxial if $\Qvec$ has a pair of degenerate non-zero eigenvalues; and (iii) biaxial if $\Qvec$ has three distinct eigenvalues. A uniaxial $\Qvec$-tensor can be written as $\Qvec_u=s\left(\nvec\otimes\nvec-\Ivec/3\right)$,
where $\Ivec$ is the $3\times3$ identity matrix, $\nvec \in S^2$ is the distinguished eigenvector with the non-degenerate eigenvalue, and  $s\in\mathbb{R}$ is a scalar order parameter that measures the degree of orientational order about $\nvec$. The unit vector, $\nvec$, is referred to as the ``director", and physically denotes the single distinguished direction of uniaxial nematic alignment \cite{Virga94}, \cite{prost1995physics}. 
The LdG theory is a variational theory, and hence, has an associated free energy, and the basic modelling hypothesis is that the physically observable configurations correspond to global or local energy minimizers subject to imposed boundary conditions. We work with two-dimensional (2D) domains, $\Omega \subset \mathbb{R}^2$, in the context of modelling \emph{thin} three-dimensional (3D) systems.
In the absence of a surface anchoring energy and external electric/magnetic fields, the LdG free energy is given by
\begin{gather}
    \mathcal{F}[\Qvec]:=\int_{\Omega}f_{el}(\Qvec,\nabla\Qvec)+f_b(\Qvec)\,\mathrm{dA}, \label{ldgunres}
\end{gather}
where $f_{el}$ and $f_b$ are the elastic and thermotropic bulk energy densities, respectively. We consider a two-term elastic energy density given by
\begin{gather}
    f_{el}(\Qvec)=\frac{L}{2}\left(|\nabla\Qvec|^2+L_2(\Div{\Qvec})^2\right),
\end{gather}
where $L>0$ is an elastic constant, and $L_2\in(-1,\infty)$ is the ``elastic anisotropy'' parameter. The elastic energy density penalises spatial inhomogeneities, typically quadratic in $\nabla \Qvec$.  In terms of notation, we use $|\nabla\Qvec|^2:=\frac{\partial Q_{ij}}{\partial x_k}\frac{\partial Q_{ij}}{\partial x_k}$ and $(\Div{\Qvec})^2:=\frac{\partial Q_{ij}}{\partial x_j}\frac{\partial Q_{ik}}{\partial x_k}$, $i,j,k =1,2,3,$ where the Einstein summation convention is assumed throughout this manuscript. Since in this work we assume a 2D confining geometry $\Omega$, we have that $\frac{\partial Q_{ij}}{\partial x_3}=0$ for all $1\leq i,j\leq 3$. 
We work with the simplest form of $f_b$, that allows for a first-order isotropic-nematic transition as a function of the temperature:
\begin{gather}
    f_b(\Qvec):=\frac{A}{2}\tr\Qvec^2-\frac{B}{3}\tr\Qvec^3+\frac{C}{4}(\tr\Qvec^2)^2. \label{bulk}
\end{gather}
Here, $\tr\Qvec^2=Q_{ij}Q_{ij}$, and $\tr\Qvec^3=Q_{ij}Q_{jk}Q_{ki}$, for $i,j,k=1,2,3$. We take $A=\alpha(T-T^*)$ to be the rescaled temperature and $\alpha, B, C>0$ are material-dependent constants. In this regime, $T$ is the absolute temperature in the system, and $T^*$ is the characteristic nematic supercooling temperature. The rescaled temperature, $A$, has three physically relevant values: (i) $A=0$, below which the isotropic state $\Qvec=0$ loses stability; (ii) the nematic super-heating temperature $A=B^2/24C$, above which the isotropic state is the unique critical point of $f_b$; and (iii) the nematic-isotropic phase transition temperature $A=B^2/27C$, at which $f_b$ is minimized by the isotropic phase and a continuum of uniaxial states. 
We work with low temperatures, $A<0$, for which $f_b$ is minimized on the set of uniaxial $\Qvec$-tensors defined by 
 $\mathcal{N}:=\{\Qvec\in S_0 : \Qvec=s_+(\nvec\otimes\nvec-\Ivec/3)\}$  where $S_0$ is the space of traceless symmetric $3\times3$ matrices and
\begin{gather}
    s_+=\frac{B+\sqrt{B^2+24|A|C}}{4C},\quad \nvec\in S^2 \,\, \text{arbitrary}.
\end{gather}
We non-dimensionalize the system using a change of variables, $\mathbf{\bar{x}}=\mathbf{x}/\lambda$, where $\lambda$ is a characteristic geometrical length-scale e.g., edge length of a 2D regular polygon. The rescaled LdG energy functional (upto a multiplicative constant) is given by:
\begin{gather}
    \mathcal{F}_{\lambda}[\Qvec]:=
    \int_{\bar{\Omega}}\left\{\frac{1}{2}|\nabla_{\mathbf{\bar{x}}}\Qvec|^2+\frac{L_2}{2}(\Div_{\mathbf{\bar{x}}}\Qvec)^2+\frac{\lambda^2}{L}f_b(\Qvec)\right\}\,\mathrm{d\bar{A}}, \label{rescE}
\end{gather}
where $\bar{\Omega}$ is the rescaled domain in $\mathbb{R}^2$, and $\mathrm{d\bar{A}}$ is the rescaled area element. We drop the `bars' but all computations should be interpreted in terms of the rescaled variables. 

Next, we define the working domain and Dirichlet boundary conditions for the purposes of this paper, although we believe that our methods can be generalised to arbitrary 2D domains, and other types of Dirichlet conditions. We focus on square domains, building on the substantial work in \cite{Walton2018}, \cite{canevari2017order}, \cite{wang2019order}. We impose Dirichlet tangent boundary conditions (TBCs) on the square edges, which require the nematic director to be tangent to the edges, necessarily creating a mismatch at the square vertices. To avoid the discontinuities at the vertices, we take $\Omega\subset\mathbb{R}^2$ to be a truncated square whose edges are parallel to the coordinate axes:
\begin{gather}
    \Omega:=\{(x,y)\in\mathbb{R}^2 : |x|<1, |y|<1, |x+y|<2-\epsilon, |x-y|<2-\epsilon\}. \label{truncsq}
\end{gather}
Provided $\epsilon \ll 1$, the truncation does not change the qualitative properties of the LdG energy minimizers away from the square vertices. The boundary, $\partial\Omega$, has four ``long'' edges parallel to the coordinate axes which we define in a clockwise fashion as $C_1,\dots,C_4$, where $C_1$ lies parallel to the $x$-axis at $y=1$. The truncation creates four additional ``short'' edges, of length $\sqrt{2}\epsilon$, parallel to the lines $y=x$ and $y=-x$, which we label as $S_1,\dots,S_4$  in a clockwise fashion, starting at the top-left corner of the domain. The domain is illustrated in Figure \ref{fig:square}.
\begin{figure}
    \centering
    \includegraphics[width=0.3\columnwidth]{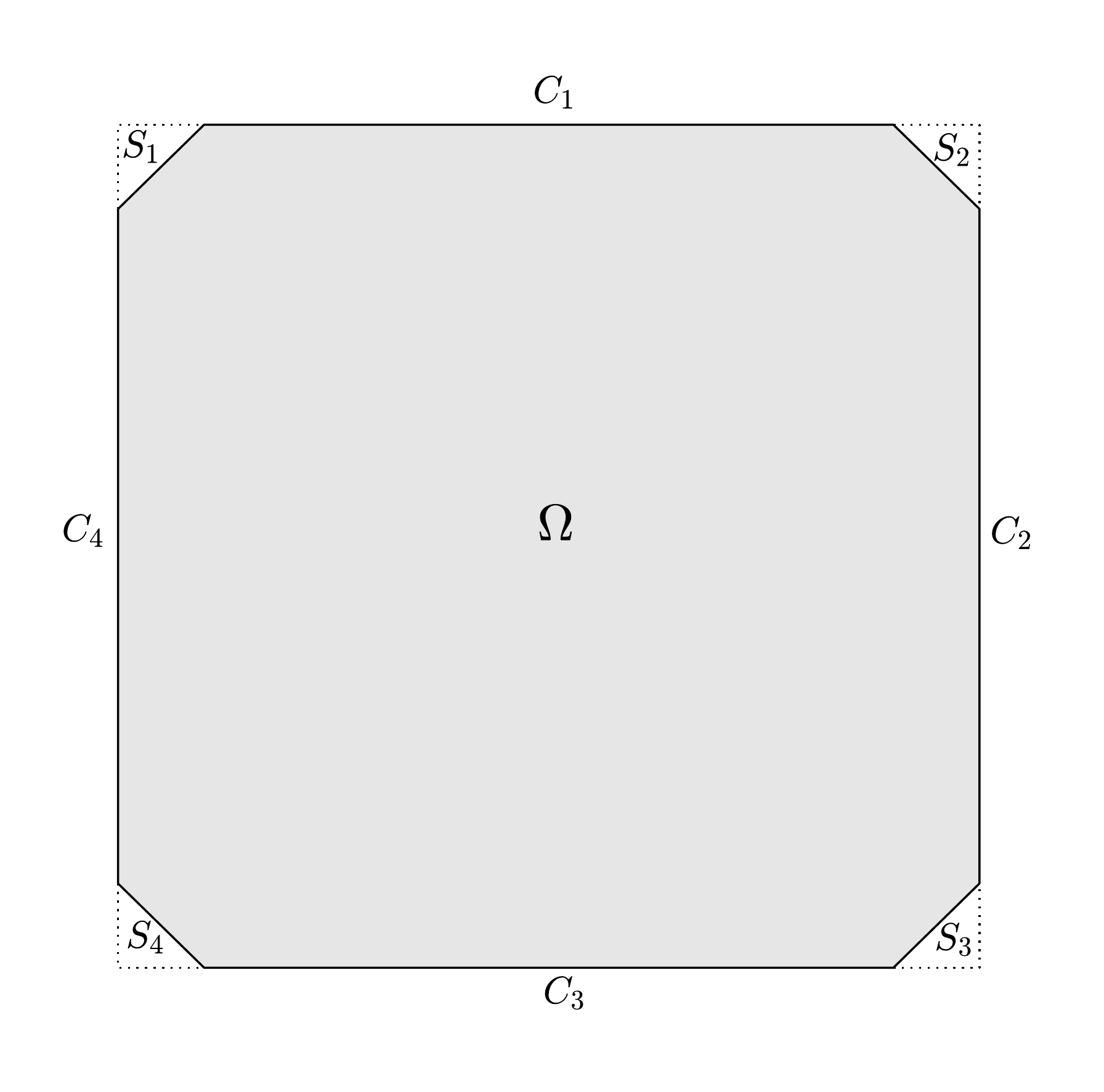}
    \caption{The truncated square domain $\Omega$.}
    \label{fig:square}
\end{figure}

We impose tangent uniaxial Dirichlet conditions on the long edges, consistent with the experimentally and numerically investigated TBCs, \cite{tsakonas2007multistable}, \cite{luo2012multistability} and \cite{kralj2014order}. In particular, we fix $\nvec = \left(\pm 1, 0 \right)$ on the edges, $C_1$ and $C_3$, and $\nvec = \left(0, \pm 1 \right)$ on $C_2$ and $C_4$. From a physical standpoint, this constitutes \textit{strong} (infinite) anchoring on the long edges. One could also model weak (finite) anchoring condition with an additional surface energy in the LdG free energy \cite{newtonmottram}, but that would make the analysis more complicated for the time being. We set
\begin{gather}
    \Qvec=\Qvec_b\qquad\text{on}\qquad\partial\Omega, \label{BCQ}
\end{gather}
where \begin{align}\label{longedgebc}
    \Qvec_b(x,y):=
    \begin{cases}
    s_+\left(\xhat\otimes\xhat-\Ivec/3\right),\qquad& (x,y)\in C_1\cup C_3, \\
    s_+\left(\yhat\otimes\yhat-\Ivec/3\right),\qquad& (x,y)\in C_2\cup C_4,
    \end{cases}
\end{align}
where $\xhat$ and $\yhat$ are unit vectors in the $x$- and $y$-direction, respectively. In particular, $\Qvec_b \in \mathcal{N}$ on $C_1, \ldots, C_4$. On the short edges, $S_1,\dots,S_4$, we effectively prescribe a continuous interpolation between the boundary conditions on the associated long edges \eqref{longedgebc} given by:
\begin{gather} \label{shortedgebc}
    \Qvec_b(x,y):=
    \begin{cases}
    g(x+y)(\xhat\otimes\xhat-\yhat\otimes\yhat)-\frac{s_+}{2}(\zhat\otimes\zhat-\Ivec/3),\quad&(x,y)\in S_1, \\
    g(y-x)(\xhat\otimes\xhat-\yhat\otimes\yhat)-\frac{s_+}{2}(\zhat\otimes\zhat-\Ivec/3),\quad&(x,y)\in S_2, \\
    g(-x-y)(\xhat\otimes\xhat-\yhat\otimes\yhat)-\frac{s_+}{2}(\zhat\otimes\zhat-\Ivec/3),\quad&(x,y)\in S_3, \\
    g(x-y)(\xhat\otimes\xhat-\yhat\otimes\yhat)-\frac{s_+}{2}(\zhat\otimes\zhat-\Ivec/3),\quad&(x,y)\in S_4,
    \end{cases}
\end{gather}
where $\zhat$ is a unit vector in the $z$-direction, and $g:[-\epsilon,\epsilon]\to[-s_+/2,s_+/2]$ is a smoothing function defined as 
\begin{gather}
    g(l)=\frac{s_+}{2\epsilon}l, \qquad -\epsilon\leq l\leq\epsilon.
\end{gather}
Although the boundary conditions \eqref{shortedgebc} do not minimize $f_b$ on $S_1,\dots,S_4$, and do not respect TBCs, they are short by construction and are chosen purely for mathematical convenience. Given the Dirichlet boundary conditions (\ref{longedgebc}) and (\ref{shortedgebc}), we define our admissible space to be
\begin{gather}
    \mathcal{A}:=\{\Qvec\in W^{1,2}(\Omega; S_0) : \Qvec=\Qvec_b\,\, \,\,\text{on}\,\,\, \partial\Omega\}. \label{admissible}
\end{gather} 
The energy minimizers, or indeed any critical point of the LdG energy (\ref{rescE}), are solutions of the associated Euler-Lagrange equations: 
\begin{gather}
    \Delta Q_{ij}+\frac{L_2}{2}\left(Q_{ik,kj}+Q_{jk,ki}-\frac{2}{3}\delta_{ij}Q_{kl,kl}\right)=\frac{\lambda^2}{L}\left\{AQ_{ij}-B\left(Q_{ik}Q_{kj}-\frac{1}{3}\delta_{ij}\tr{\Qvec^2}\right)+CQ_{ij}\tr{\Qvec^2}\right\},\label{EL}
\end{gather}
which comprise a system of five nonlinear coupled partial differential equations. The terms $\frac{2}{3}Q_{kl,kl}$ and $\frac{1}{3}\tr{\Qvec^2}$ are Lagrange multipliers associated with the tracelessness constraint.

Finally, we comment on the physical relevance of the 2D domain, $\Omega \subset \mathbb{R}^2$. Consider a 3D well,
$$
\mathcal{B} = \left\{(x,y,z) \in \mathbb{R}^3; (x,y) \in \Omega; z \in (0, h) \right\},
$$
where $h \ll \lambda$, and $\lambda$ is a characteristic length scale associated with $\Omega$. In this limit, one can assume (at least for modelling purposes) that  physically relevant $\Qvec$-tensors are independent of the $z$-coordinate i.e., the profiles are invariant across the height of the well, and that $\zhat$ is a fixed eigenvector (see \cite{golovaty2017dimension} and \cite{bauman2012analysis} for some rigorous analysis and justification). This implies that we can restrict ourselves to $\Qvec$-tensors with three degrees of freedom:
\begin{gather}
    \Qvec=q_1(x,y)(\xhat\otimes\xhat-\yhat\otimes\yhat)+q_2(x,y)(\xhat\otimes\yhat+\yhat\otimes\xhat) \nonumber\\
    +q_3(x,y)(2\zhat\otimes\zhat-\xhat\otimes\xhat-\yhat\otimes\yhat)\label{q123},
\end{gather}
subject to the boundary conditions
\begin{gather}
    q_1(x,y)=q_b(x,y)=
    \begin{cases}
    s_+/2,\quad&\text{on}\quad C_1\cup C_3, \\
    -s_+/2,\quad&\text{on}\quad C_2\cup C_4, \\
    g(x+y),\quad&\text{on}\quad S_1,\\
    g(y-x),\quad&\text{on}\quad S_2,\\
    g(-x-y),\quad&\text{on}\quad S_3,\\
    g(x-y),\quad&\text{on}\quad S_4, 
    \end{cases} \label{bcq1}
\end{gather}
and 
\begin{gather}
    q_2=0,\quad\text{ and}\quad q_3(x,y)=-s_+/6\quad\text{ on}\quad \partial\Omega. \label{bcq2q3}
\end{gather}
The conditions (\ref{bcq1}) and (\ref{bcq2q3}) are equivalent to Dirichlet conditions in (\ref{BCQ}).

\section{Qualitative Properties of Equilibrium Configurations}
\label{sec:qualitative}
 In \cite{kralj2014order}, the authors numerically compute critical points of (\ref{rescE}) with $L_2=0$, satisfying the Dirichlet boundary conditions (\ref{BCQ}), on the square cross-section $\Omega$ with edge length $\lambda$. For $\lambda$ small enough, the authors report a new Well Order Reconstruction Solution ($WORS$). The $WORS$ has a constant set of eigenvectors, $\xhat, \yhat$, and $\mathbf{\hat{z}}$, which are the coordinate unit vectors. The $WORS$ is further distinguished by a uniaxial cross, with negative scalar order parameter, along the square diagonals. Physically, this implies that  there is a planar defect cross along the square diagonals, and the nematic molecules are disordered along the square diagonals. In \cite{canevari2017order}, the authors analyse this system at a fixed temperature $A=-B^2/3C$ with $L_2 = 0$, and show that the $WORS$ is a classical solution of the associated Euler-Lagrange equations (\ref{EL}) of the form:
\begin{align}
\mathbf{Q}_{WORS}(x,y)=q(\xhat\otimes\xhat-\yhat\otimes\yhat)-\frac{B}{6C}(2\mathbf{\hat{z}}\otimes\mathbf{\hat{z}}-\xhat\otimes\xhat-\yhat\otimes\yhat). \label{WQ}
\end{align}
There is a single degree of freedom, $q:\Omega\to\mathbb{R}$, which satisfies the Allen-Cahn equation and exhibits the following symmetry properties:
\begin{gather}
    q=0\quad\text{on}\quad\{y=x\}\cup\{y=-x\},\qquad
    (y^2-x^2)q(x,y)\geq0.
\end{gather}
Notably, $q_2=0$ everywhere for the $WORS$ (refer to (\ref{q123})), which is equivalent to having a set of constant eigenvectors in the plane of $\Omega$.
They prove that the $WORS$ is globally stable for $\lambda$ small enough, and unstable for $\lambda$ large enough, demonstrating a pitchfork bifurcation in a scalar setting. Their analysis is restricted to the specific temperature and, in \cite{canevariharrismajumdarwang}, the authors extend the analysis to all $A<0$, with $L_2=0$. In this section, we analyse the equilibrium configurations with $L_2 \neq 0$, including their symmetry properties in the small $\lambda$ limit. Notably, we show that the cross structure of the $WORS$ does not survive with $L_2 \neq 0$, in the following propositions.

\begin{proposition}\label{prop1}
There exists at least one solution to the Euler-Lagrange equations (\ref{EL}) of the form (\ref{q123}) in $\mathcal{A}$, given the Dirichlet boundary conditions (\ref{longedgebc}) and (\ref{shortedgebc}), provided the functions $q_1, q_2, q_3$ satisfy the following systems of PDEs:
\begin{align}
    \left(1+\frac{L_2}{2}\right)\Delta q_1+\frac{L_2}{2}(q_{3,yy}-q_{3,xx})=&\frac{\lambda^2}{L}q_1(A+2Bq_3+2C(q_1^2+q_2^2+3q_3^2)), \label{q1eq} \\
    \left(1+\frac{L_2}{2}\right)\Delta q_2-L_2q_{3,xy}=&\frac{\lambda^2}{L}q_2(A+2Bq_3+2C(q_1^2+q_2^2+3q_3^2)), \label{q2eq}\\
    \left(1+\frac{L_2}{6}\right)\Delta q_3+\frac{L_2}{6}(q_{1,yy}-q_{1,xx})-\frac{L_2}{3}q_{2,xy}=&\frac{\lambda^2}{L}q_3(A-Bq_3+2C(q_1^2+q_2^2+3q_3^2))+\frac{\lambda^2B}{3L}(q_1^2+q_2^2),\label{q3eq}
\end{align}
and the boundary conditions (\ref{bcq1}) and (\ref{bcq2q3}).
\end{proposition}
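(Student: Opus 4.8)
The plan is to recast the problem as a reduced variational problem in three scalar unknowns, solve that problem by the direct method, and then verify that the resulting critical point of the reduced energy is a genuine critical point of the full Landau--de Gennes energy. Concretely, I would first substitute the ansatz (\ref{q123}) into the rescaled energy (\ref{rescE}) to obtain a reduced functional $\bar{\mathcal{F}}_\lambda[q_1,q_2,q_3]$. A short computation shows that the Dirichlet part collapses to $|\nabla\Qvec|^2 = 2|\nabla q_1|^2 + 2|\nabla q_2|^2 + 6|\nabla q_3|^2$ and that $(\Div\Qvec)$ has vanishing third component, so the $L_2$-term reduces to an explicit quadratic form in $\nabla q_1,\nabla q_2,\nabla q_3$. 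Computing the first variation of $\bar{\mathcal{F}}_\lambda$ then reproduces precisely the system (\ref{q1eq})--(\ref{q3eq}); this is the routine projection of the tensorial Euler--Lagrange operator onto the mutually orthogonal directions $\xhat\otimes\xhat-\yhat\otimes\yhat$, $\xhat\otimes\yhat+\yhat\otimes\xhat$ and $2\zhat\otimes\zhat-\xhat\otimes\xhat-\yhat\otimes\yhat$.

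For existence, I would apply the direct method to $\bar{\mathcal{F}}_\lambda$ over the class of triples $(q_1,q_2,q_3)\in W^{1,2}(\Omega;\R^3)$ satisfying the boundary conditions (\ref{bcq1})--(\ref{bcq2q3}), which is nonempty since the boundary data extend into $\Omega$. Coercivity follows because, for $L_2>-1$, the reduced elastic density is a positive-definite quadratic form in the gradient --- a finite-dimensional linear-algebra check on the cross terms of the divergence quadratic form --- so it dominates a fixed multiple of $|\nabla q_1|^2+|\nabla q_2|^2+|\nabla q_3|^2$, while the quartic bulk $f_b$ is bounded below; together with Poincar\'e's inequality (the boundary data being fixed) this controls the full $W^{1,2}$ norm. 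Weak lower semicontinuity holds because the elastic integrand is convex (indeed positive-definite quadratic) in the gradient, and along a bounded minimizing sequence the bulk term passes to the limit by the compact Sobolev embedding $W^{1,2}(\Omega)\hookrightarrow L^4(\Omega)$ in two dimensions. A minimizer $(q_1,q_2,q_3)$ therefore exists and, being a weak solution of the semilinear elliptic system (\ref{q1eq})--(\ref{q3eq}) with smooth polynomial right-hand side, is smooth in the interior by standard elliptic bootstrapping.

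The step I regard as the crux is the consistency or lifting argument: one must confirm that $\Qvec=q_1(\xhat\otimes\xhat-\yhat\otimes\yhat)+q_2(\xhat\otimes\yhat+\yhat\otimes\xhat)+q_3(2\zhat\otimes\zhat-\xhat\otimes\xhat-\yhat\otimes\yhat)$ solves the full five-component system (\ref{EL}), not merely the restricted one. The cleanest route is the principle of symmetric criticality: the ansatz class is exactly the fixed-point set of the involution $\Qvec\mapsto R\Qvec R^{\top}$ with $R=\mathrm{diag}(1,1,-1)$, under which $\tr\Qvec^2$, $\tr\Qvec^3$, $|\nabla\Qvec|^2$ and $(\Div\Qvec)^2$ are all invariant (the last because $\partial_3\equiv 0$ and $R$ fixes the $x$- and $y$-directions), and the boundary data (\ref{longedgebc})--(\ref{shortedgebc}) lie in this fixed-point set. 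Hence a critical point of the restricted energy is a critical point of the full energy. Equivalently, and more by hand, one checks that on the ansatz the $(1,3)$ and $(2,3)$ components of (\ref{EL}) vanish identically: the bulk terms vanish because $Q_{13}=Q_{23}=0$, and the elastic and $L_2$-divergence terms vanish because $(\Div\Qvec)_3=0$ and $\partial_3\equiv 0$. Either way, the two transverse equations are automatically satisfied, so the three reduced PDEs capture the entire system, which completes the proof.
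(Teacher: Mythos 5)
Your overall strategy --- substitute the ansatz, minimize the reduced functional by the direct method, then lift back to the full five-component system --- is exactly the route the paper takes, and your treatment of the lifting step is actually more complete than the paper's: the paper simply asserts the consistency with (\ref{EL}) by analogy with a result of Bauman--Park--Phillips, whereas your symmetric-criticality argument (the ansatz class being the fixed-point set of $\Qvec\mapsto R\Qvec R^{\top}$, $R=\mathrm{diag}(1,1,-1)$, with the energy invariant because $\partial_3\equiv 0$) and your direct check that the $(1,3)$ and $(2,3)$ components of (\ref{EL}) vanish identically are both correct and make that step explicit.

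There is, however, a genuine gap in your coercivity and lower-semicontinuity step. Substituting the ansatz gives, up to the overall factor, $f_{el}=|\nabla q_1|^2+|\nabla q_2|^2+3|\nabla q_3|^2+\frac{L_2}{2}\left[(q_{1,x}+q_{2,y}-q_{3,x})^2+(q_{2,x}-q_{1,y}-q_{3,y})^2\right]$, and your claim that this is a positive-definite quadratic form in the gradient for all $L_2>-1$ is false: taking $q_{1,x}=q_{2,y}=1$, $q_{3,x}=-\tfrac13$ and all other components zero yields $f_{el}=\tfrac{7}{3}+\tfrac{49}{18}L_2$, which is negative for $L_2<-\tfrac{6}{7}$ (and by Cauchy--Schwarz this is the sharp threshold on that subspace). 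So for $L_2\in(-1,-6/7)$ the pointwise density is indefinite, hence neither pointwise coercive nor convex, and the ``finite-dimensional linear-algebra check'' you invoke would in fact fail there; the same issue undermines your convexity-based argument for weak lower semicontinuity on that range. The missing idea is the one the paper supplies in its expressions (\ref{pos}) and (\ref{neg}): for $L_2\in(-1,0)$ one must add a null Lagrangian (combinations of the Jacobians $q_{i,x}q_{j,y}-q_{i,y}q_{j,x}$, which integrate to a boundary term and hence do not change the variational problem under Dirichlet data) so as to rewrite $f_{el}$ as $(1+L_2)\left(|\nabla q_1|^2+|\nabla q_2|^2+3|\nabla q_3|^2\right)$ plus manifestly non-negative terms. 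With that modified representative one recovers the lower bound $f_{el}\geq\min\{1,1+L_2\}\left(|\nabla q_1|^2+|\nabla q_2|^2+3|\nabla q_3|^2\right)$ modulo a constant, and both coercivity and weak lower semicontinuity follow for the whole range $L_2>-1$; without it, your proof only covers $L_2\geq 0$ (where your argument is fine) and, with a little more care, $L_2>-6/7$.
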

\begin{proof}
Our proof is analogous to Theorem 2.2 in \cite{bauman2012analysis}. Substituting the $\Qvec$-tensor ansatz \eqref{q123} into the general form of the LdG energy \eqref{rescE}, let
\begin{align}
    J[q_1,q_2,q_3]:=&\int_{\Omega}f_{el}(q_1,q_2,q_3)+\frac{\lambda^2}{L}f_b(q_1,q_2,q_3)\,\mathrm{dA},\label{funcq123}
\end{align}
where
\begin{align}
    f_{el}(q_1,q_2,q_3):=&\left(1+\frac{L_2}{2}\right)|\nabla q_1|^2+\left(1+\frac{L_2}{2}\right)|\nabla q_2|^2+\left(3+\frac{L_2}{2}\right)|\nabla q_3|^2 \nonumber\\
    &+L_2(q_{1,y}q_{3,y}-q_{1,x}q_{3,x}-q_{2,y}q_{3,x}-q_{2,x}q_{3,y})+|L_2|(q_{2,y}q_{1,x}-q_{1,y}q_{2,x}), 
    \end{align}
    and
    \begin{align}
    f_b(q_1,q_2,q_3):=&A(q_1^2+q_2^2+3q_3^2)+C(q_1^2+q_2^2+3q_3^2)^2+2Bq_3(q_1^2+q_2^2-q_3^2),
\end{align}
are the elastic and thermotropic bulk energy densities, respectively. We prove the existence of minimizers of $J$ in the admissible class
\begin{gather} \label{Ao}
    \mathcal{A}_0:=\{(q_1,q_2,q_3)\in W^{1,2}(\Omega;\mathbb{R}^3) : q_1=q_b,\, q_2=0,\, q_3=-s_+/6\,\,\text{on}\,\,\pp\Omega\},
\end{gather} which will also be solutions of (\ref{EL}) in the admissible space, $\mathcal{A}$.
Since the boundary conditions (\ref{bcq1}) and (\ref{bcq2q3}) are piece-wise of class $C^1$, we have that the admissible space $\mathcal{A}_0$ is non-empty. The next step, is to check that $J$ is coercive in $\mathcal{A}_0$. 
The elastic energy density can be rewritten as a function of $(q_1,q_2,q_3)\in W^{1,2}(\Omega;\mathbb{R}^3)$ in the following two ways: 
\begin{gather}
    f_{el}=|\nabla q_1|^2+|\nabla q_2|^2+3|\nabla q_3|^2+\frac{L_2}{2}((q_{1,x}+q_{2,y}-q_{3,x})^2+(q_{2,x}-q_{1,y}-q_{3,y})^2), \label{pos}
    \end{gather}
    if $L_2\in[0,\infty)$, and
    \begin{gather}
    f_{el} =(1+L_2)(|\nabla q_1|^2+|\nabla q_2|^2+3|\nabla q_3|^2)-\frac{L_2}{2}((-q_{3,x}-q_{1,x}-q_{2,y})^2+(q_{2,x}-q_{1,y}+q_{3,y})^2+4|\nabla q_3|^2),\label{neg}
\end{gather}
if $L_2\in(-1,0)$. The difference between the expressions for $f_{el}$ in \eqref{pos} and \eqref{neg}, is a null Lagrangian, and hence can be ignored under the Dirichlet boundary condition.
Since we assume that $1+L_2>0$, we see that the elastic energy density can be written as the sum of non-negative terms for any $L_2>-1$ and, more specifically,
\begin{gather}
    f_{el}(q_1,q_2,q_3)\geq\min\{1,1+L_2\}\left(|\nabla q_1|^2+|\nabla q_2|^2+3|\nabla q_3|^2\right),\qquad L_2\geq0. \label{posl2}
\end{gather}
Furthermore, the bulk energy potential $f_b$ also satisfies
\begin{gather}
    f_b(q_1,q_2,q_3)\geq f_b(\pm\frac{s_+}{2},0,-\frac{s_+}{6})=:M_1(A,B,C),
\end{gather}
for some constant, $M_1>0$, depending only on the material-dependent parameters $A,B$, and $C$.
Hence $J[q_1,q_2,q_3]$ is coercive in $\mathcal{A}_0$. Finally, we note that $J$ is weakly lower semi-continuous on $W^{1,2}(\Omega)$, which follows immediately from the fact that $f_{el}$ is quadratic and convex in $\nabla(q_1,q_2,q_3)$. 
Thus, the direct method in the calculus of variations yields the existence of a global minimizer of the functional $J$ among the finite energy triplets $(q_1,q_2,q_3)\in W^{1,2}(\Omega; \mathbb{R}^3)$, satisfying the boundary conditions (\ref{bcq1})--(\ref{bcq2q3})  \cite{Evans49}. 
One can verify that the semilinear elliptic system (\ref{q1eq})--(\ref{q3eq}) corresponds to the Euler-Lagrange equations associated with $J$, and the minimizers for $J$ are $C^{\infty}(\Omega)\cap C^2(\bar{\Omega})$ solutions of (\ref{q1eq})--(\ref{q3eq}). The corresponding $\Qvec$-tensor (\ref{q123}) is an exact solution of the LdG Euler-Lagrange equations (\ref{EL}).
\end{proof}

\begin{proposition} \label{prop:forever_critical}
    There exists a critical point $(q_1^s, q_2^s, q_3^s)$ of the energy functional (\ref{funcq123}) in the admissible space $\mathcal{A}_0$, for all $\lambda>0$, such that $q_1=0$ on the square diagonals $y=x$ and $y=-x$, and $q_2=0$ on $x=0$ and $y=0$. 
\end{proposition}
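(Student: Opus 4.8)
The plan is to realize the desired critical point as a minimizer of $J$ over a symmetrized subclass of $\mathcal{A}_0$, and then to upgrade this constrained minimizer to a genuine critical point of $J$ on all of $\mathcal{A}_0$ via the principle of symmetric criticality. The relevant symmetry is the dihedral group $G$ of order eight generated by the two reflections
$$ R_1:(x,y)\mapsto(x,-y), \qquad R_2:(x,y)\mapsto(y,x), $$
i.e. reflection across the $x$-axis and across the diagonal $y=x$, which together generate reflections across both coordinate axes and both diagonals, as well as the rotations by multiples of $\pi/2$. Since $\Qvec$ transforms as $\Qvec\mapsto R\Qvec R^{T}$ under an orthogonal coordinate change $R$, each $g\in G$ acts on the triple $(q_1,q_2,q_3)$ by precomposing the spatial argument with $g$ together with an internal sign change: reflection across a coordinate axis fixes $q_1,q_3$ and flips $q_2$, whereas reflection across a diagonal fixes $q_2,q_3$ and flips $q_1$. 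I would make this action precise and record the fixed-point (symmetric) subspace $\mathcal{A}_0^{G}\subset\mathcal{A}_0$.

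First I would check compatibility of the set-up with $G$. The truncated domain $\Omega$ in (\ref{truncsq}) is $G$-invariant, and one verifies directly that the Dirichlet data (\ref{bcq1})--(\ref{bcq2q3}) are fixed by the $G$-action: on the long edges the alternating values $q_1=\pm s_+/2$, $q_2=0$, $q_3=-s_+/6$ are permuted consistently with the internal sign changes, and the short-edge interpolations built from $g$ are arranged symmetrically by construction. Hence $\mathcal{A}_0^{G}$ is a non-empty, closed, affine (thus weakly closed) subset of $\mathcal{A}_0$. The more substantial verification is $G$-invariance of the energy density. The bulk density $f_b$ depends only on $q_3$ and $q_1^2+q_2^2$, so it is manifestly invariant, and the isotropic terms $|\nabla q_i|^2$ are invariant because $G$ acts spatially by orthogonal maps. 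The part requiring care is the anisotropic contribution — the $L_2$ cross term and the $|L_2|$ null-Lagrangian term in $f_{el}$ — whose invariance I would establish by a direct chain-rule computation under the generators $R_1$ and $R_2$, tracking the sign flips of $q_1,q_2$ together with the swap of the $x$- and $y$-derivatives; these terms are built precisely so that the signs cancel after integration over the symmetric domain $\Omega$.

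With $G$-invariance in hand, I would minimize $J$ over $\mathcal{A}_0^{G}$. Coercivity and weak lower semicontinuity carry over verbatim from the proof of Proposition~\ref{prop1}, and since $\mathcal{A}_0^{G}$ is weakly closed, the direct method yields a minimizer $(q_1^s,q_2^s,q_3^s)\in\mathcal{A}_0^{G}$ for every $\lambda>0$. To see that this is a critical point of $J$ on all of $\mathcal{A}_0$ — not merely a constrained one — I would use the averaging argument underlying symmetric criticality: since $J$ and the base point are $G$-invariant, the first variation $\langle J'(q^s),\cdot\rangle$ is $G$-equivariant, so for an arbitrary $\phi\in W^{1,2}_0(\Omega;\R^3)$ one has $\langle J'(q^s),\phi\rangle=\langle J'(q^s),P\phi\rangle$, where $P=\tfrac{1}{|G|}\sum_{g\in G}g$ projects onto $G$-symmetric functions (note $P\phi\in W^{1,2}_0$ since the action preserves the boundary). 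As $q^s$ minimizes over the symmetric class, the right-hand side vanishes, so $\langle J'(q^s),\phi\rangle=0$ for all $\phi$ and $(q_1^s,q_2^s,q_3^s)$ solves (\ref{q1eq})--(\ref{q3eq}). The asserted nodal properties then follow immediately from the fixed-point constraints: invariance under the diagonal reflection $R_2$ forces $q_1^s(x,y)=-q_1^s(y,x)$, whence $q_1^s\equiv0$ on $\{y=x\}$ (and on $\{y=-x\}$ via the other diagonal reflection), while invariance under the axis reflections makes $q_2^s$ odd across each axis, whence $q_2^s\equiv0$ on $\{x=0\}$ and $\{y=0\}$.

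I expect the main obstacle to be twofold. First, the book-keeping needed to confirm that the anisotropic terms $L_2(\cdots)$ and $|L_2|(\cdots)$ in $f_{el}$ are genuinely $G$-invariant, since a single wrong sign in the internal action would break the argument. Second, making the passage from constrained to unconstrained criticality airtight, i.e. justifying the $G$-equivariance of $J'(q^s)$ and the interchange with the projection $P$; this requires $J\in C^{1}$ on the relevant space, which is guaranteed here by the polynomial structure of $f_b$ together with the smoothness of minimizers already established in Proposition~\ref{prop1}.
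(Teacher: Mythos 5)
Your proposal is correct, and the underlying symmetry (the order-eight dihedral group acting by $(x,y)\mapsto(x,-y)$ with $q_2\mapsto-q_2$, and $(x,y)\mapsto(y,x)$ with $q_1\mapsto-q_1$) is exactly the one the paper exploits; your verification of the internal sign action and of the invariance of the anisotropic terms in $f_{el}$ checks out. The technical route, however, is genuinely different. The paper minimizes $J$ on the fundamental domain $\Omega_q$ (one eighth of the square) subject to the mixed Dirichlet/Neumann conditions (\ref{ANquadrantBCs}) on the symmetry lines, extends the minimizer to all of $\Omega$ by odd/even reflections (odd for $q_1$ across the diagonals, odd for $q_2$ across the axes, even otherwise), and then invokes the reflection arguments of Dang--Fife--Peletier to conclude that the glued triple is a weak solution of the Euler--Lagrange system on the whole square. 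You instead minimize over the $G$-fixed affine subspace $\mathcal{A}_0^{G}$ of the full domain and pass to unconstrained criticality via the averaging form of Palais's principle of symmetric criticality. The two constrained minimization problems are equivalent (restriction to $\Omega_q$ identifies $\mathcal{A}_0^{G}$ with $\mathcal{A}_q$, and the Neumann conditions on the symmetry lines are precisely the natural boundary conditions of the fixed-point formulation), so the critical points produced coincide. What your route buys is that it bypasses the gluing step entirely --- there is no need to check that reflections of a weak solution with mixed boundary data remain weak solutions across the reflection lines --- at the price of having to verify the $G$-invariance of the full anisotropic energy and the $C^1$ regularity of $J$ on $W^{1,2}$ (both of which you correctly identify as the delicate points, and both of which hold: the cross terms are invariant under the generators, and the quartic bulk is $C^1$ on $W^{1,2}$ in two dimensions by Sobolev embedding). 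The paper's route is more constructive and makes the quadrant boundary conditions explicit, which it reuses elsewhere; yours is cleaner as an existence argument and generalizes more readily to other symmetry groups and domains.
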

\begin{proof}
We follow the approach in \cite{canevari2017order}. We define the following $1/8^{th}$ of a square located in the positive quadrant of $\Omega$:
\begin{gather}
    \Omega_q:=\{(x,y)\in\Omega : 0 < y<x,\,\, 0< x< 1\}.
\end{gather}
The following boundary conditions on $\Omega_q$ are consistent with the boundary conditions (\ref{bcq1}) and (\ref{bcq2q3}) on the whole of $\Omega$: 
\begin{gather}\label{ANquadrantBCs}
    \begin{cases}
    q_1=q_b, \, q_2=0,\, q_3=-\frac{s_+}{6},\quad & (x,y)\in\pp\Omega_q\cap\pp\Omega; \\
    q_1=\pp_\nu q_2=\pp_\nu q_3=0,\quad & (x,y)\in\pp\Omega_q\cap\{y=x\};\\
    \pp_\nu q_1=q_2=\pp_\nu q_3=0,\quad & (x,y)\in\pp\Omega_q\cap\{y=0\},\\
    \end{cases}
\end{gather}
where $\pp_\nu$ represents the outward normal derivative. We minimize the associated LdG energy functional in $\Omega_q$, given by:
\begin{gather}
J[q_1,q_2,q_3]=\int_{\Omega_q}f_{el}(q_1,q_2,q_3)+\frac{\lambda^2}{L}f_b(q_1,q_2,q_3)\,\mathrm{dA}, 
\end{gather}
on the admissible space
\begin{gather}
\mathcal{A}_q:=\{(q_1,q_2,q_3)\in W^{1,2}(\Omega_q;\mathbb{R}^3): (\ref{ANquadrantBCs})\quad \textrm{is satisfied}\}.   
\end{gather}
As the boundary conditions on $\Omega_q$ are continuous and piecewise of class $C^1$, we have that $\mathcal{A}_q$ is non-empty. Furthermore, we have shown that $J$ is coercive on $\mathcal{A}_q$ and convex in the gradient $\nabla(q_1,q_2,q_3)$. Thus, by the direct method in the calculus of variations, we have the existence of a minimizer $(q_1^*,q_2^*,q_3^*)\in\mathcal{A}_q$. We define a function $q_1^s\in\Omega$ by odd reflection of $q_1^*\in\Omega_q$ about the square diagonals and even reflection about $x$- and $y$-axis. An illustration of the reflected solution $q_1^s(x,y)$ is given in Figure \ref{fig:reflected}. We can do the same for the function $q_2^s\in\Omega$ defined by even reflections of $q_2^*$  about the square diagonals and odd reflection about $x$- and $y$-axis and lastly, for the function $q_3^s\in\Omega$ defined by even reflections of $q_3^*$ about the square diagonals and $x$- and $y$-axis.
\begin{figure}
    \centering
    \includegraphics[width=0.4\columnwidth]{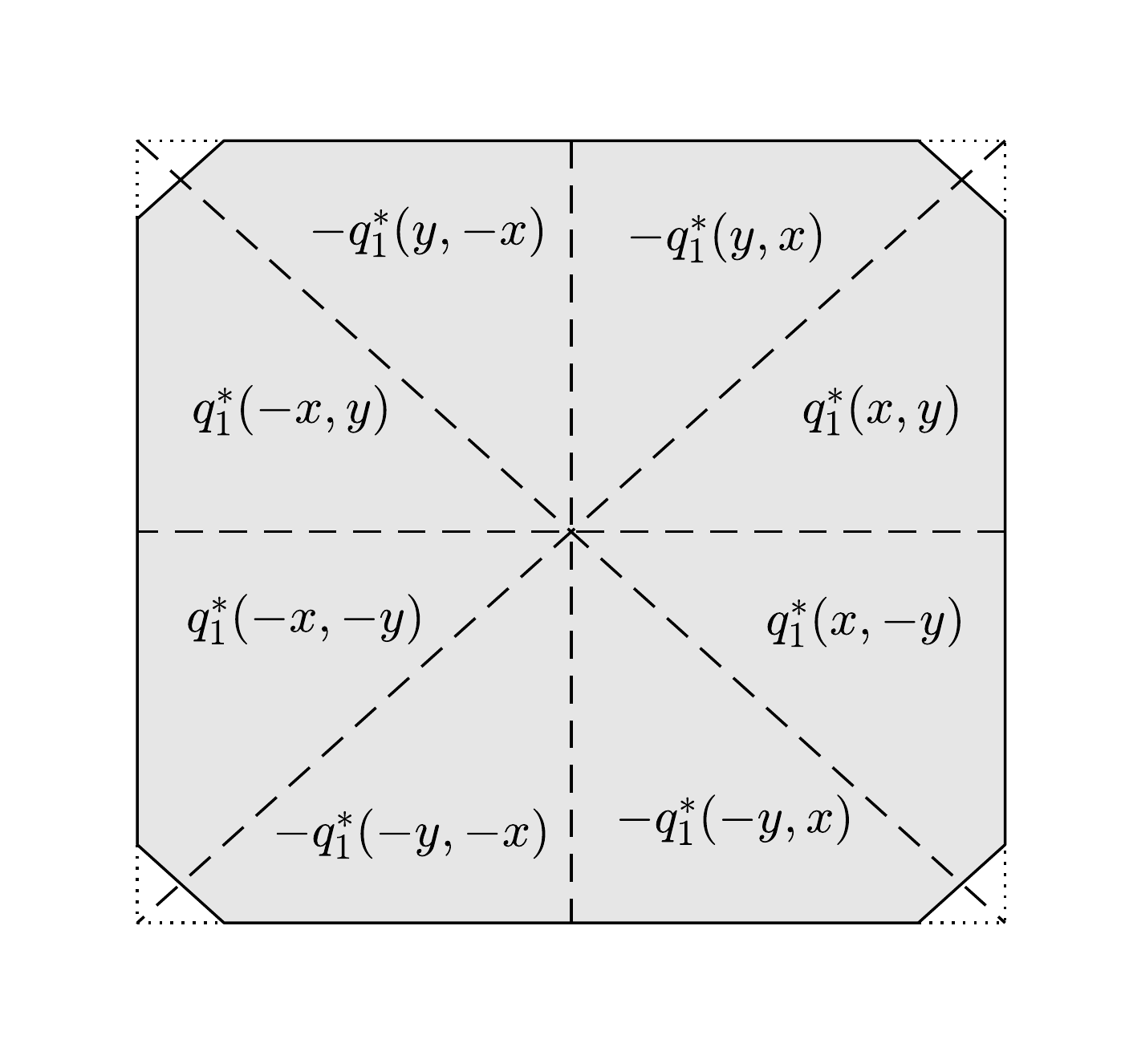}
    \caption{The reflected solution $q_1^s(x,y)$ in Proposition \ref{prop:forever_critical}.}
    \label{fig:reflected}
\end{figure}
 By repeating the arguments in \cite{dangfifepeletier}, we can prove the new triple, $(q_1^s, q_2^s, q_3^s)$, is a weak solution of the associated Euler-Lagrange equation on $\Omega$. One can verify that $(q_1^s,q_2^s,q_3^s)$ is a critical point of $J$ on $\mathcal{A}_0$ with the desired properties.
\end{proof}

\begin{proposition}\label{prop:Ring_non_constant}
For $A<0$ and $L_2\neq0$, the critical solution $(q_1^s, q_2^s, q_3^s)$ constructed in Proposition \ref{prop:forever_critical}, has non-constant $q_2^s$ on $\Omega$, for all $\lambda>0$.
\end{proposition}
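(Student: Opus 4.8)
The plan is to argue by contradiction. Suppose $q_2^s$ is constant on $\Omega$; since $q_2^s = 0$ on $\partial\Omega$ by \eqref{bcq2q3}, that constant must be zero, so $q_2^s \equiv 0$. I would substitute this into the second Euler--Lagrange equation \eqref{q2eq}: the Laplacian term and the entire right-hand side vanish, leaving $-L_2\, q^s_{3,xy} \equiv 0$. Since $L_2 \neq 0$, the mixed derivative vanishes identically, $q^s_{3,xy} \equiv 0$ on $\Omega$, so $q_3^s$ is separable, $q_3^s(x,y) = f(x) + g(y)$.

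Next I would use the Dirichlet data to pin down $q_3^s$. On the long edge $C_1 \subset \{y=1\}$ the condition $q_3^s = -s_+/6$ reads $f(x) + g(1) = -s_+/6$, forcing $f$ to be constant along the $x$-extent of $C_1$; the long edge $C_2 \subset \{x=1\}$ forces $g$ constant likewise. Hence $q_3^s \equiv -s_+/6$ on the central square $\{|x|,|y| < 1-\epsilon\}\cap\Omega$; the truncated corners are irrelevant for what follows, since the contradiction is extracted near the centre. I would then insert the constant $q_3^s$ and $q_2^s\equiv 0$ into the remaining equations: \eqref{q1eq} reduces to a scalar Allen--Cahn equation $(1+L_2/2)\Delta q_1 = \tfrac{\lambda^2}{L}q_1\,G(q_1^2)$, while \eqref{q3eq}, whose $\Delta q_3$ term now vanishes but whose $L_2$-coupling to $q_1$ survives, reduces to the pointwise identity
\begin{gather}
\frac{L_2}{6}\left(q^s_{1,yy} - q^s_{1,xx}\right) = \frac{\lambda^2}{L}\left(\gamma + \delta\,(q_1^s)^2\right), \label{starrel}
\end{gather}
with explicit constants $\gamma, \delta$ depending only on $A,B,C$, both nonzero precisely when $A \neq -B^2/3C$.

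Combining the Allen--Cahn equation with \eqref{starrel} expresses both $q^s_{1,xx}$ and $q^s_{1,yy}$ as fixed cubic polynomials of $q_1^s$ alone, say $q^s_{1,xx} = \Phi(q_1^s)$ and $q^s_{1,yy} = \Psi(q_1^s)$, whose leading coefficient is $C\lambda^2/(L(1+L_2/2)) \neq 0$. The final step exploits the reflection symmetries of the constructed solution. Equating $q^s_{1,xxyy} = q^s_{1,yyxx}$ gives the compatibility identity $\Phi''(q_1)q_{1,y}^2 - \Psi''(q_1)q_{1,x}^2 = \Psi'\Phi - \Phi'\Psi$; evaluating it on the diagonal $\{y=x\}$, where $q_1^s = 0$ and the odd reflection gives $q^s_{1,x} = -q^s_{1,y}$, collapses it to $(q^s_{1,x})^2 = \mathrm{const}$ along the diagonal, while the evenness of $q_1^s$ in $x$ forces $q^s_{1,x}(0,0) = 0$; these are incompatible since the constant is nonzero when $\gamma,\delta \neq 0$. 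For the degenerate temperature $A = -B^2/3C$ (where $\gamma=\delta=0$) I would instead note that \eqref{starrel} becomes $q^s_{1,xx}=q^s_{1,yy}$, so $q_1^s$ separates as $\phi(x+y)+\psi(x-y)$, and vanishing on both diagonals forces $q_1^s\equiv 0$ on the central square, hence $q_1^s\equiv 0$ on $\Omega$ by analyticity of solutions of the elliptic system --- contradicting $q_1^s = \pm s_+/2$ on the long edges.

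The main obstacle is this last step: converting the constancy of $q_3^s$ into a genuine contradiction with the non-constant, symmetric $q_1^s$. The reduction to $q^s_{3,xy}\equiv 0$ and then to constant $q_3^s$ is routine, but the over-determination argument requires using the Allen--Cahn equation, the $L_2$-coupling identity \eqref{starrel}, and the reflection and evenness symmetries of the constructed critical point simultaneously, with the two temperature regimes ($\gamma,\delta\neq 0$ versus the degenerate $A=-B^2/3C$) handled by slightly different mechanisms.
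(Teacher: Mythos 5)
Your argument is correct and follows essentially the same route as the paper's proof: contradiction via $q_2^s\equiv 0$, separability and then constancy of $q_3^s$, the overdetermined pair of equations for $q_1^s$ (Laplacian equal to a cubic, d'Alembertian equal to a quadratic plus a constant), and the cross-differentiation compatibility identity combined with the reflection symmetries of the constructed solution. The only cosmetic differences are that you evaluate the compatibility identity along the whole diagonal (where $q_1^s=0$ and $q^s_{1,x}=-q^s_{1,y}$) before specialising to the origin, whereas the paper evaluates directly at the origin, and in the degenerate case $A=-B^2/3C$ you exploit the vanishing of $q_1^s$ on both diagonals to force $q_1^s\equiv 0$, whereas the paper uses the $y\mapsto -y$ symmetry and three boundary points; both yield the same contradiction.
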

\begin{proof}
We proceed by contradiction. Assume that $q_2^s$ is constant on $\Omega$. Recalling the boundary conditions (\ref{bcq2q3}), we necessarily have that $q_2^s\equiv0$ in $\Omega$. Substituting $q_2^s \equiv 0$ into (\ref{q2eq}), we obtain
\begin{gather}
    q_3^s(x,y)=F(x)+G(y),
\end{gather}
for arbitrary real-valued functions $F,G$, with $q_3^s=-s_+/6$ on $\partial\Omega$. Therefore, $q_3^s\equiv-s_+/6$ in $\Omega$. Substituting $q_2^s \equiv 0$ and $q_3^s \equiv -s_+/6$ into \eqref{q1eq} and \eqref{q3eq} yields
\begin{align}
    q_{1,yy}^s+q_{1,xx}^s &= f(q_1^s), \label{fqequation}\\
    q_{1,yy}^s-q_{1,xx}^s &= g(q_1^s) + C_g,\label{Cg}
\end{align}
where 
\begin{align}
    f(q_1^s) &= \frac{4C\lambda^2}{(2+L_2)L}(q_1^s)^3 + \frac{2\lambda^2}{(2+L_2)L}(A-\frac{Bs_+}{3}+\frac{Cs_+^2}{6})q_1^s,\\
    g(q_1^s) &= \frac{2\lambda^2}{LL_2}(B-Cs_+)(q_1^s)^2,\\
    C_g = & -\frac{\lambda^2s_+}{LL_2}(A+\frac{Bs_+}{6}+\frac{Cs_+^2}{6}).
\end{align}
From the reduced PDEs for $q_1^s$, (\ref{fqequation}) and (\ref{Cg}), one can calculate
\begin{equation}
    2(q_{1,xx}^s)_{yy}-2(q_{1,yy}^s)_{xx} =f''(q_1^s)((q_{1,y}^s)^2-(q_{1,x}^s)^2)-g''(q_1^s)((q_{1,y}^s)^2+(q_{1,x}^s)^2)\\+f'(q_1^s)(g(q_1^s)+C_g) - g'(q_1^s)f(q_1^s).\\
   \label{xxyy}
\end{equation}
Furthermore, from the symmetry properties of the constructed solution $q_1^s$ in Proposition \ref{prop:forever_critical}, we have
\begin{equation}\label{q1_00}
    q_1^s\vert_{(0,0)} = q_{1,x}^s\vert_{(0,0)} = q_{1,y}^s\vert_{(0,0)} = 0.
\end{equation}    
Substituting \eqref{q1_00} into \eqref{xxyy}, we obtain
\begin{equation}\label{q1_nonzero}
    (2q_{1,xxyy}^s-2q_{1,yyxx}^s)\vert_{(0,0)} = (f'(q_1^s)C_g)\vert_{(0,0)} = -\frac{2\lambda^2}{(2+L_2)L}(A-\frac{Bs_+}{3}+\frac{Cs_+^2}{6})\frac{\lambda^2s_+}{LL_2}(A+\frac{Bs_+}{6}+\frac{Cs_+^2}{6}).
\end{equation}
If $A\neq-B^2/3C$, then the right hand side of equation (\ref{q1_nonzero}) at $(0,0)$ is non-zero, which leads to a contradiction. If $A=-B^2/3C$, then $q_3^s\equiv-s_+/6=-B/6C$ and (\ref{Cg}) reduces to
\begin{gather}
    q_{1,yy}^s-q_{1,xx}^s=0,
\end{gather}
which implies that $q_1^s$ is of the following form:
\begin{gather}
    q_1^s(x,y)=F_1(x-y)+F_2(x+y),
\end{gather}
for arbitrary real-valued functions $F_1,F_2$. From Proposition \eqref{prop:forever_critical}, we know that for any $\lambda>0$, $q_1^s$ satisfies the symmetry property $q_1^s(x,y)=q_1^s(x,-y)$ and hence,
\begin{gather}
F_1(x-y)+F_2(x+y)=F_1(x+y)+F_2(x-y), \quad (x,y)\in\Omega. \label{bothsides}
\end{gather}
Subtracting $F_2(x-y)+F_2(x+y)$ on both sides of the equality (\ref{bothsides}), we get\begin{gather}
G(x-y)=F_1(x-y)-F_2(x-y)=F_1(x+y)-F_2(x+y)=G(x+y), \quad (x,y)\in\Omega.
\end{gather}
Therefore, 
\begin{gather}
G(z)=F_1(z)-F_2(z)\equiv K, \quad z\in(-2,2),
\end{gather}
for some constant $K$. The function $q_1^s$ may now be rewritten as
\begin{gather}
q_1^s(x,y)=F_1(x+y)+F_1(x-y)-K, \quad (x,y)\in\Omega.   
\end{gather}
This formulation cannot be extended continuously on the boundary since, for $(x,y)=(0,1)$, $(-1,0)$ and $(1,0)$, we have
\begin{gather}
F_1(1)+F_1(-1)-K=\frac{s_+}{2},\ 2F_1(-1)-K=-\frac{s_+}{2},\ 2F_1(1)-K=-\frac{s_+}{2},
\end{gather}
which again leads to the required contradiction.
\end{proof}

\begin{proposition}\label{prop2}
There exists a critical edge length $\lambda_0>0$ such that, for any $\lambda<\lambda_0$, the critical point, $(q_1,q_2,q_3)$, constructed in Proposition \ref{prop:forever_critical} is the unique critical point of the LdG energy (\ref{funcq123}).
\end{proposition}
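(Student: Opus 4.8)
The plan is to prove uniqueness by a convexity argument that plays off the coercivity of the elastic energy against the smallness of the bulk term for $\lambda\ll1$. Suppose $(q_1^a,q_2^a,q_3^a)$ and $(q_1^b,q_2^b,q_3^b)$ are two critical points of $J$ in $\mathcal{A}_0$ and write $\mathbf{w}=(w_1,w_2,w_3)$ with $w_i=q_i^a-q_i^b$, so that $\mathbf{w}\in W_0^{1,2}(\Omega;\R^3)$ since both share the boundary data (\ref{bcq1})--(\ref{bcq2q3}). Each critical point satisfies $\delta J=0$, i.e. $B_{el}(q,\phi)+\frac{\lambda^2}{L}\int_\Omega\nabla_q f_b(q)\cdot\phi\,\mathrm{dA}=0$ for all $\phi\in W_0^{1,2}(\Omega;\R^3)$, where $B_{el}$ denotes the symmetric bilinear form obtained as the first variation of the quadratic elastic density $f_{el}$. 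Subtracting the two identities and taking $\phi=\mathbf{w}$ gives
\[
B_{el}(\mathbf{w},\mathbf{w})=-\frac{\lambda^2}{L}\int_\Omega\bigl(\nabla_q f_b(q^a)-\nabla_q f_b(q^b)\bigr)\cdot\mathbf{w}\,\mathrm{dA}.
\]
By the rewriting (\ref{pos})--(\ref{neg}) and the coercivity bound (\ref{posl2}), the left-hand side is bounded below by $2\min\{1,1+L_2\}\int_\Omega|\nabla\mathbf{w}|^2\,\mathrm{dA}$, which is the positivity I will exploit.

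The decisive ingredient is a uniform a priori bound: a constant $M=M(A,B,C,L_2)$, independent of $\lambda$ in a fixed bounded range, such that every critical point satisfies $q_1^2+q_2^2+3q_3^2\le M^2$ pointwise. Granting this, $\nabla_q f_b$ — a fixed cubic polynomial — is Lipschitz with some constant $\ell_M$ on the ball $\{q_1^2+q_2^2+3q_3^2\le M^2\}$, so the right-hand side above is at most $\frac{\lambda^2}{L}\ell_M\int_\Omega|\mathbf{w}|^2\,\mathrm{dA}$. The Poincaré inequality on $W_0^{1,2}(\Omega)$, with constant $C_P=C_P(\Omega)$, then yields $2\min\{1,1+L_2\}\int_\Omega|\nabla\mathbf{w}|^2\le\frac{\lambda^2}{L}\ell_M C_P\int_\Omega|\nabla\mathbf{w}|^2$. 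Setting $\lambda_0:=\bigl(2L\min\{1,1+L_2\}/(\ell_M C_P)\bigr)^{1/2}$, for every $\lambda<\lambda_0$ the prefactor on the right is strictly smaller than that on the left, forcing $\int_\Omega|\nabla\mathbf{w}|^2=0$ and hence, with the vanishing boundary data, $\mathbf{w}\equiv0$. Thus any two critical points coincide, and in particular each coincides with the critical point of Proposition \ref{prop:forever_critical}.

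I expect the uniform $L^\infty$ bound to be the genuinely delicate step. For $L_2=0$ it is the classical maximum-principle estimate for the reduced system: the function $v=q_1^2+q_2^2+3q_3^2\,(=\tfrac12\tr\Qvec^2)$ satisfies an elliptic inequality in which the confining quartic bulk term dominates for large $v$, yielding $v\le s_+^2/3$ (equivalently $\tr\Qvec^2\le 2s_+^2/3$) independently of $\lambda$ and of the domain. For $L_2\neq0$ the anisotropic couplings — the terms $q_{3,yy}-q_{3,xx}$, $q_{3,xy}$, $q_{1,yy}-q_{1,xx}$ and $q_{2,xy}$ in (\ref{q1eq})--(\ref{q3eq}) — survive in the equation for $\Delta v$ and, because the elastic weights $1+L_2/2$ and $1+L_2/6$ differ, do not combine into a sign-definite expression, so the pointwise maximum principle for $v$ is not immediate. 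I would circumvent this using the divergence and perfect-square structure made explicit in (\ref{pos})--(\ref{neg}): the anisotropic contributions split into a null Lagrangian plus non-negative squares, which integrate harmlessly against truncations of the form $(v-k)^+$, so that a De Giorgi--Moser iteration driven by the coercive quartic term still produces an $L^\infty$ bound with a constant depending only on $A,B,C,L_2$ and not on $\lambda\le1$. With this bound in hand, the convexity estimate of the first two paragraphs closes the argument and identifies $\lambda_0$ explicitly.
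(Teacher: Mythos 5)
Your overall strategy---pit the coercivity of the elastic part against the $O(\lambda^2)$ bulk term and conclude uniqueness for small $\lambda$---is the same as the paper's, and your implementation (subtracting the two weak Euler--Lagrange identities and testing with the difference $\mathbf{w}$) would close correctly \emph{if} your a priori bound held. The genuine gap is exactly the step you flag yourself: the uniform $L^\infty$ bound on arbitrary critical points of the anisotropic system, with a constant independent of $\lambda$. You do not prove it, and the sketch you offer is not convincing as stated. For $L_2\neq 0$ the cross-coupling terms $q_{3,yy}-q_{3,xx}$, $q_{3,xy}$, $q_{1,yy}-q_{1,xx}$, $q_{2,xy}$ in \eqref{q1eq}--\eqref{q3eq} obstruct the maximum principle for $\tr\Qvec^2$, and the decompositions \eqref{pos}--\eqref{neg} are statements about the elastic \emph{energy density}, not about the second-order operator acting on $\tr\Qvec^2$; it is not clear that the ``null Lagrangian plus squares'' structure survives multiplication by the truncations $(v-k)^+$ in a De Giorgi--Moser iteration. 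Moreover, for a critical point that is not a global minimizer you have no a priori energy bound to start any bootstrap from, so the Lipschitz constant $\ell_M$ is not under control. As written, the load-bearing lemma of your proof is a conjecture.

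The paper's proof shows that this pointwise bound is unnecessary. It first gets an a priori Dirichlet-energy bound $\int_\Omega f_{el}\leq M_2$ by comparing the minimizer's energy with that of a competitor that minimizes $f_b$ pointwise, restricts to the class $\mathcal{A}_{upper}$ of maps obeying this bound, and then exploits the two-dimensional embedding $W^{1,2}\hookrightarrow L^p$ (all $p<\infty$) to control $\|f_b''\|_{L^2}$ by $c_0\sqrt{M_2}$. A H\"older inequality then gives the almost-midpoint-convexity estimate
\begin{equation*}
\int_{\Omega}f_b\Bigl(\tfrac{x+y}{2}\Bigr)-\tfrac{1}{2}f_b(x)-\tfrac{1}{2}f_b(y)\,\mathrm{dA}\leq c_1\|x-y\|_{L^4}^2,
\end{equation*}
which is balanced against the strong convexity of the elastic part in the $L^4$ norm (via \eqref{posl2}, Poincar\'e and the same embedding), yielding strict convexity of $J$ and hence uniqueness for $\lambda<\lambda_0=\sqrt{c_2 L/(8c_1)}$. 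The moral difference is that the paper only ever needs \emph{integral} ($L^2$/$L^4$) control of the nonlinearity, obtained softly from energy comparison and Sobolev embedding, whereas your route demands \emph{pointwise} control, which is precisely where the elastic anisotropy bites. If you want to keep your testing-the-difference argument, you should replace the Lipschitz-on-a-ball step by a H\"older estimate of the form $\int_\Omega|\nabla_q f_b(q^a)-\nabla_q f_b(q^b)||\mathbf{w}|\,\mathrm{dA}\leq\|f_b''\|_{L^2}\,\|\mathbf{w}\|_{L^4}^2$ (up to the mean-value theorem applied along the segment from $q^b$ to $q^a$) and supply the energy bound for the competitors you consider; that essentially reproduces the paper's argument. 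A minor additional point: your lower bound for $B_{el}(\mathbf{w},\mathbf{w})$ quietly uses that the discrepancy between \eqref{pos}, \eqref{neg} and $f_{el}$ is a null Lagrangian, which is legitimate here only because $\mathbf{w}$ has zero boundary data---worth saying explicitly.
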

\begin{proof}
We adapt the uniqueness criterion argument in Lemma 8.2 of \cite{lamy2014}. 
Let $(q_1^{\lambda},q_2^{\lambda},q_3^{\lambda})$ be a global minimizer of energy functional $J$ in \eqref{funcq123} for $\lambda>0$. Let $(q_1^{\infty}(\mathbf{x}),q_2^{\infty}(\mathbf{x}),q_3^{\infty}(\mathbf{x})) \in \mathcal{A}_0$ be such that 
\begin{equation}\label{minfb}
f_b(q_1^{\infty}(\mathbf{x}),q_2^{\infty}(\mathbf{x}),q_3^{\infty}(\mathbf{x}))=\min f_b = \frac{A}{3}s_+^2-\frac{2B}{27}s_+^3+\frac{C}{9}s_+^4,
\end{equation}
a.e. $\mathbf{x}\in\Omega$.
Defining $\bar{f}_b(q_1,q_2,q_3) = \frac{1}{L}(f_b(q_1,q_2,q_3)-\min f_b(q_1,q_2,q_3))$, where $L$ is constant, we have 
\begin{equation}\label{qinfty}
\int_{\Omega}f_{el}(q_1^{\lambda},q_2^{\lambda},q_3^{\lambda})\,\textrm{dA}\leq\int_{\Omega}f_{el}(q_1^{\lambda},q_2^{\lambda},q_3^{\lambda})+\lambda^2\bar{f}_b(q_1^{\lambda},q_2^{\lambda},q_3^{\lambda})\,\textrm{dA}\leq\int_{\Omega}f_{el}(q_1^{\infty},q_2^{\infty},q_3^{\infty})\,\textrm{dA} = M_2(A,B,C,L_2),
\end{equation}
for some constant, $M_2>0$, depending only on $A,B,C$ and $L_2$.
Thus, we restrict ourselves to the following admissible space of $\mathbf{Q}$-tensors:
\begin{equation}
\mathcal{A}_{upper} = \left\{\mathbf{Q}:\int_{\Omega} \frac{1}{2}|\nabla \mathbf{Q}|^2 \textrm{dA}\leq M_2(A,B,C,L_2)\right\}. 
\end{equation}
We note that the second derivatives of $f_b$ are quadratic polynomials in $(q_1, q_2, q_3)$. By an application of the relevant embedding theorem in \cite{brezis2010functional} (Theorem 9.16 which implies that for a bounded domain $\Omega\subset\mathbb{R}^N$ with Lipschitz boundary, for any $u\in C^1_c(\Omega)$, $||u||_{L^p}\leq c||u||_{W^{1,2}}$, $\forall p\in[N,\infty)$, with constant $c$ depending only on $\Omega$), we have that there exists some constant $c_0$, depending only on $A,B,C$ and $\Omega$, such that
\begin{equation}
\left(\int_{\Omega}|f_b''|^2\textrm{dA}\right)^{1/2}\leq c_0(A,B,C,\Omega)\left(\int_{\Omega}|\nabla \mathbf{Q}|^2\,\mathrm{dA}\right)^{1/2} \leq c_0\sqrt{M_2}.
\end{equation} 
We apply the H\"{o}lder inequality to get, for any $x,y\in\mathcal{A}_{upper}$,
\begin{equation}
    \int_{\Omega}f_b\left(\frac{x+y}{2}\right)-\frac{1}{2}f_b(x)-\frac{1}{2}f_b(y)\, \textrm{dA}\leq 
    \frac{1}{8}\sup_{\mathcal{A}_{upper}}\left(\int_{\Omega}|f_b''|^2\textrm{dA}\right)^{1/2} \left(\int_{\Omega}|x-y|^4\textrm{dA}\right)^{1/2} \leq \frac{c_0 \sqrt{M_2}}{8}||x-y||^2_{L_4}
\end{equation}
Therefore, for any $(q_1, q_2, q_3), (\tilde{q_1}, \tilde{q_2}, \tilde{q_3}) \in \mathcal{A}_{upper}$, we have
\begin{equation}\label{finenergy1}
 \int_{\Omega}f_b\left(\frac{q_1+\tilde{q_1}}{2},\frac{q_2+\tilde{q_2}}{2},\frac{q_3+\tilde{q_3}}{2}\right)-\frac{1}{2}f_b(q_1,q_2,q_3)-\frac{1}{2}f_b(\tilde{q_1},\tilde{q_2},\tilde{q_3})\,\mathrm{dA}\leq c_1||q_1-\tilde{q_1},q_2-\tilde{q_2},q_3-\tilde{q_3}||^2_{L_4}
\end{equation}
where $c_1=c_1(A, B, C, L_2, \Omega)>0$. Using (\ref{posl2}), an application of the Poincar\'{e} inequality, and repeating the same arguments as above, we have
 \begin{align}
  \int_{\Omega}f_{el}(q_1-\tilde{q_1},q_2-\tilde{q_2},q_3-\tilde{q_3})\,\mathrm{dA}
  &\geq  \,\min\{1,1+L_2\}\int_{\Omega}|\nabla(q_1-\tilde{q}_1)|^2+|\nabla(q_2-\tilde{q}_2)|^2+3|\nabla(q_3-\tilde{q_3})|^2\,\mathrm{dA} \\
  &\geq  \min\{1,1+L_2\}K(\Omega)\left(||q_1-\tilde{q}_1||^2_{W^{1,2}}+||q_2-\tilde{q}_2||^2_{W^{1,2}}+3||q_3-\tilde{q_3}||^2_{W^{1,2}}\right)\\
  &\geq  \,\, c_2(\Omega,L_2)||q_1-\tilde{q_1},q_2-\tilde{q_2},q_3-\tilde{q_3}||^2_{L^4} \label{finenergy2}
 \end{align}
for some constant, $c_2>0$, depending only on $\Omega$ and the sign of $L_2$. Using both (\ref{finenergy1}) and (\ref{finenergy2}), we have
\begin{align}
     J\left[\frac{q_1+\tilde{q_1}}{2},\frac{q_2+\tilde{q_2}}{2},\frac{q_3+\tilde{q_3}}{2}\right]\leq&\,\,\frac{1}{2}J[q_1,q_2,q_3]+\frac{1}{2}J[\tilde{q_1},\tilde{q_2},\tilde{q_3}]-\frac{c_2}{4}||q_1-\tilde{q_1},q_2-\tilde{q_2},q_3-\tilde{q_3}||^2_{L^4}\nonumber \\
     &\qquad+\frac{c_1\lambda^2}{L}||q_1-\tilde{q_1},q_2-\tilde{q_2},q_3-\tilde{q_3}||^2_{L^4} \\
     =&\,\,\frac{1}{2}J[q_1,q_2,q_3]+\frac{1}{2}J[\tilde{q_1},\tilde{q_2},\tilde{q_3}]-\frac{c_2}{8}||q_1-\tilde{q_1},q_2-\tilde{q_2},q_3-\tilde{q_3}||^2_{L^4}\nonumber \\
     &\qquad-c_1\left(\frac{c_2}{8c_1}-\frac{\lambda^2}{L}\right)||q_1-\tilde{q_1},q_2-\tilde{q_2},q_3-\tilde{q_3}||^2_{L^4}, \nonumber
     \\
      \leq \,\,&\frac{1}{2}J[q_1,q_2,q_3]+\frac{1}{2}J[\tilde{q_1},\tilde{q_2},\tilde{q_3}]
 \end{align}
for $\lambda\leq\lambda_0:=\sqrt{c_2L/(8c_1)}$. Thus, $J$ is strictly convex for the finite energy triplets $(q_1,q_2,q_3)$, and has a unique critical point for $\lambda<\lambda_0$. We deduce that the critical point constructed in Proposition \ref{prop:forever_critical} is the unique minimizer of $J[q_1, q_2, q_3]$ and, in fact, the unique global LdG energy minimizer (when we consider $\Qvec$-tensors with the full five degrees of freedom, as opposed to this reduced setting, (\ref{q123}), with three degrees of freedom), for sufficiently small $\lambda$.
\end{proof}
\begin{lemma}\label{lem1}
Suppose that $(q_1,q_2,q_3)$ is the unique global minimizer of the energy (\ref{funcq123}), for $\lambda<\lambda_0$ given by Proposition \ref{prop2}. Then for any $L_2>-1$, the function $q_1:\Omega\to\mathbb{R}$ vanishes along the square diagonals $y=x$ and $y=-x$, and the function $q_2:\Omega\to\mathbb{R}$ vanishes along $y=0$ and $x=0$.
\end{lemma}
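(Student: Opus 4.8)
The plan is to deduce this lemma directly from the two preceding propositions, since it is essentially a uniqueness-plus-symmetry corollary rather than an independent computation. First I would recall that Proposition~\ref{prop:forever_critical} produces, for every $\lambda>0$ and every admissible $L_2>-1$, a critical point $(q_1^s,q_2^s,q_3^s)\in\mathcal{A}_0$ of the reduced energy $J$ whose components satisfy precisely the vanishing properties asserted here: $q_1^s=0$ on the diagonals $\{y=x\}\cup\{y=-x\}$ and $q_2^s=0$ on $\{x=0\}\cup\{y=0\}$. These properties are built into the construction there, being an immediate consequence of the odd reflections used to extend $q_1^*$ across the diagonals and $q_2^*$ across the coordinate axes.

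The second ingredient is Proposition~\ref{prop2}: for $\lambda<\lambda_0$ the reduced energy $J$ is strictly convex on the relevant finite-energy class, so it admits exactly one critical point in $\mathcal{A}_0$, which is simultaneously the unique global minimizer. Since any interior global minimizer solves the Euler--Lagrange system (\ref{q1eq})--(\ref{q3eq}) and is therefore a critical point, the minimizer hypothesised in the statement and the symmetric critical point $(q_1^s,q_2^s,q_3^s)$ are both critical points of the same strictly convex functional whenever $\lambda<\lambda_0$. By the uniqueness of Proposition~\ref{prop2} they must coincide, i.e.\ $(q_1,q_2,q_3)=(q_1^s,q_2^s,q_3^s)$ on $\Omega$, whence the vanishing properties transfer verbatim to the global minimizer and the claim follows.

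I would stress that the argument is valid for all $L_2>-1$ precisely because both Proposition~\ref{prop:forever_critical} and Proposition~\ref{prop2} hold throughout this range; in particular the threshold $\lambda_0=\sqrt{c_2 L/(8c_1)}$ is strictly positive for every admissible $L_2$, since $c_2$ inherits the factor $\min\{1,1+L_2\}>0$ from the coercivity estimate (\ref{posl2}). There is no genuine obstacle in this proof: the single point requiring a line of care is the elementary observation that a global energy minimizer is a critical point of $J$, so that the uniqueness statement may legitimately be applied to identify it with the reflection-symmetric solution rather than merely with some other minimizer. Everything else is bookkeeping.
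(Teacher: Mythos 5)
Your argument is correct, and it is in fact the route the paper itself flags in the first line of its proof of Lemma~\ref{lem1} (``this is in fact an immediate consequence of Proposition~\ref{prop:forever_critical}''), though the proof the paper actually writes out is a different one. You identify the unique global minimizer with the reflection-constructed critical point $(q_1^s,q_2^s,q_3^s)$ of Proposition~\ref{prop:forever_critical}, whose vanishing on the diagonals and axes is built into the odd reflections; the paper instead works directly with the minimizer $(q_1,q_2,q_3)$ and observes that the transformed triples $(q_1(-x,y),-q_2(-x,y),q_3(-x,y))$, $(q_1(x,-y),-q_2(x,-y),q_3(x,-y))$ and $(-q_1(y,x),q_2(y,x),q_3(y,x))$ are again solutions of \eqref{q1eq}--\eqref{q3eq} compatible with the boundary data, so that uniqueness forces $q_1(x,x)=-q_1(x,x)$, $q_2(0,y)=-q_2(0,y)$, etc. The paper's version has the mild advantage of not needing the quadrant construction at all (only uniqueness plus the invariances of the PDE system and boundary conditions), whereas yours leans on Proposition~\ref{prop:forever_critical} but requires no new computation; both hinge on the same uniqueness statement from Proposition~\ref{prop2}, and your observations that a global minimizer is a critical point and that $\lambda_0>0$ for every $L_2>-1$ are exactly the points that need to be made. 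No gap.
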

\begin{proof} This is in fact an immediate consequence of Proposition~\ref{prop:forever_critical}, but we present an alternative short proof based on symmetry observations.
Suppose that $(q_1,q_2,q_3)\in W^{1,2}(\Omega,\mathbb{R}^3)$ is a global minimizer of the associated energy functional $J$, in the admissible class $\mathcal{A}_0$ for a given $\lambda>0$. Then $(q_1(x,y),q_2(x,y),q_3(x,y))$ is a solution of the Euler-Lagrange system (\ref{q1eq})--(\ref{q3eq}), subject to the boundary conditions (\ref{bcq1}) and (\ref{bcq2q3}). So are the triples
\begin{gather*}
    (q_1(-x,y),-q_2(-x,y),q_3(-x,y)),\quad (q_1(x,-y),-q_2(x,-y),q_3(x,-y)),\quad (-q_1(y,x),q_2(y,x),q_3(y,x))
\end{gather*}
that are compatible with the imposed boundary conditions. 
 We combine this symmetry result with the uniqueness result in Proposition \ref{prop2} to get the desired conclusion. For example, we simply use $q_1(x,y)=-q_1(y,x)$ with $x=y$ to deduce that $q_1(x,x)=0$. Also, $q_1(-x,y)=q_1(x,y)$ with $x=y$ yields that $q_1(x,-x)=q_1(x,x)=0$. Furthermore, we use the relation $q_2(x,y)=-q_2(-x,y)$ with $x=0$ to deduce that $q_2(0,y)=0$, and similarly, $q_2(x,y)=-q_2(x,-y)$ with $y=0$ to deduce that $q_2(x,0)=0$.
\end{proof}

\textbf{Remark:} Let $(q_1^*, q_2^*, q_3^*)$ be a critical point of $J$ in $\mathcal{A}_0$. If $q_3^*$ is a constant, then we necessarily have $q_3^*\equiv-s_+/6$ in $\Omega$. Therefore, equations (\ref{q1eq})--(\ref{q2eq}) become
\begin{align}
   \left(1+\frac{L_2}{2}\right)\Delta q_1&=\frac{\lambda^2}{L}q_1\left(A-\frac{Bs_+}{3}+2C(q_1^2+q_2^2)+\frac{Cs_{+}^2}{6}\right),\label{laplace_q1}\\
   \left(1+\frac{L_2}{2}\right)\Delta q_2&=\frac{\lambda^2}{L}q_2\left(A-\frac{Bs_+}{3}+2C(q_1^2+q_2^2)+\frac{Cs_{+}^2}{6}\right).
\end{align}
From Proposition \ref{prop2}, we know that for $\lambda<\lambda_0(A,B,C,L,L_2)$, the solution is unique, and hence we have $q_2^*\equiv0$ in $\Omega$.
Analogous to the proof in Proposition \ref{prop:Ring_non_constant}, we get a contradiction and deduce that $q_2^*$ and $q_3^*$ are non-constant throughout $\Omega$, for $\lambda$ small enough.
\begin{figure}[htbp]
    \begin{center}
    \includegraphics[width=0.85\columnwidth]{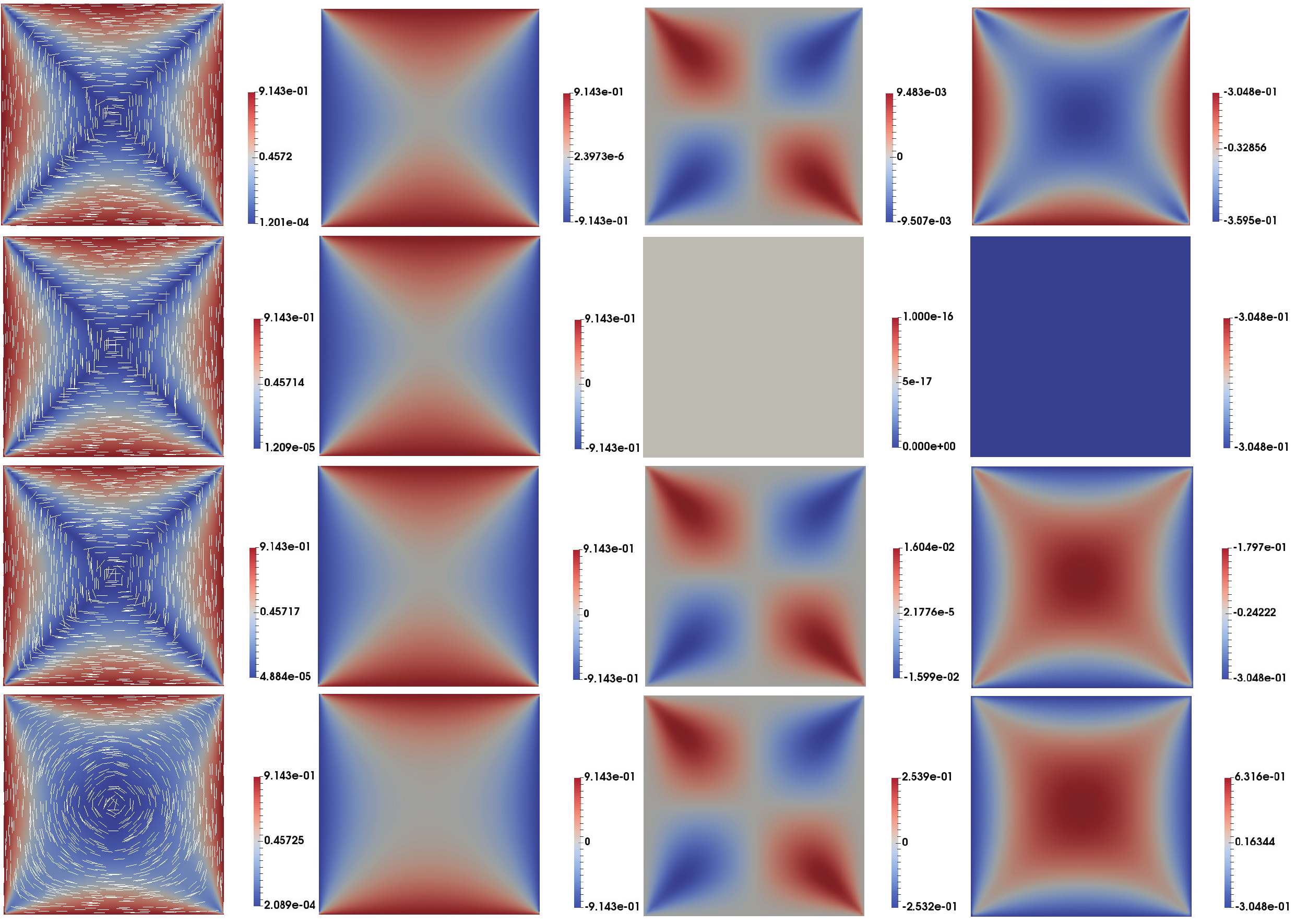}
        \caption{The unique stable solution of the Euler-Lagrange equations \eqref{q1eq}--\eqref{q3eq}, with $\bar{\lambda}^2 = 5$, and (from the first to fourth row) $L_2 = -0.5$, $0$, $1$ and $10$, respectively. In the first column, we plot the scalar order parameter $s^2 = q_1^2 + q_2^2$ by color from blue to red, and the director profile $\mathbf{n} = (\cos(arctan(q_2/q_1)/2),\sin(arctan(q_2/q_1)/2))$ in terms of white lines. We plot the corresponding $q_1,q_2$ and $q_3$ profiles, in the second to fourth columns, respectively.}
        \label{fig:RING_lambda_5}
    \end{center}
\end{figure}

As in \cite{wang2019order,canevariharrismajumdarwang}, we frequently refer to the following dimensionless parameter in our numerical simulations:
\begin{gather*}
    \bar{\lambda}^2:=\frac{2C\lambda^2}{L}.
\end{gather*}
In Figure \ref{fig:RING_lambda_5}, we plot the unique stable solution of \eqref{q1eq}--\eqref{q3eq} with $\bar{\lambda}^2= 5$, for $L_2 =-0.5$, $0$, $1$, $10$. In this figure, and all subsequent figures in this paper, we fix $A=-B^2/3C$ with $B=0.64\times10^{4}\,\mathrm{Nm}^{-2}$ and $C=0.35\times10^{4}\,\mathrm{Nm}^{-2}$. When $L_2 = 0$, the solution is the $WORS$ defined by (\ref{WQ}). When $L_2=-0.5$, $1$, and $10$, $q_2$ and $q_3$ are non-constant as proven above. One can check that $q_1:\Omega\to\mathbb{R}$ vanishes along the square diagonals $y=x$ and $y=-x$, and the function $q_2:\Omega\to\mathbb{R}$ vanishes along $y=0$ and $x=0$, as proven in Lemma \ref{lem1}. When $L_2=-0.5,1,$ and $10$, we observe a central $+1$-point defect in the profile of $(q_1,q_2)$, and we label this as the $Ring^+$ solution. 
\begin{figure}
    \begin{center}
    \includegraphics[width=0.85\columnwidth]{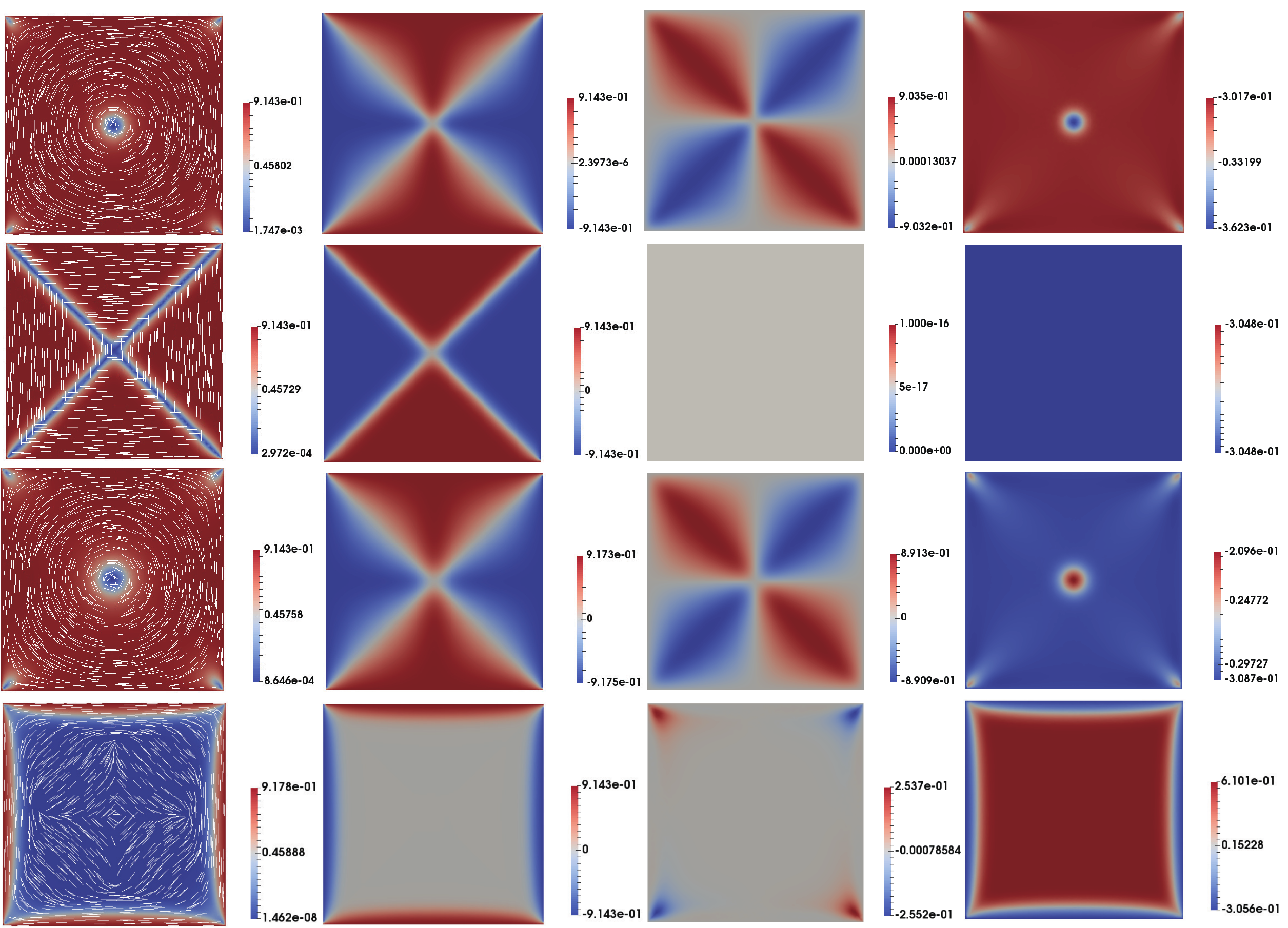}
        \caption{A solution branch for the Euler-Lagrange system \eqref{q1eq}--\eqref{q3eq} with $\bar{\lambda}^2 = 500$, and $L_2 =-0.5$, $0$, $1$ and $10$ respectively, plotted in the first, second, third and fourth row respectively. This solution branch is a symmetric solution branch, as described in Proposition~\ref{prop:forever_critical}. When $L_2 = -0.5$, $0$ and $1$, the plotted solution is unstable. When $L_2 = 10$, the plotted solution is stable. The first column contains plots of the order parameter, $s^2$, and the director $\mathbf{n}$. In the second to fourth column, we plot the corresponding, $q_1,q_2$ and $q_3$ profiles.}
        \label{fig:RING_lambda_500}
    \end{center}
\end{figure}
We perform a parameter sweep of $\bar{\lambda}^2$, from $5$ to $500$, and find one of the symmetric solution branches constructed in Proposition \ref{prop:forever_critical}, such that $q_1:\Omega\to\mathbb{R}$ vanishes along the square diagonals $y=x$ and $y=-x$ and  $q_2:\Omega\to\mathbb{R}$ vanishes along $y=0$ and $x=0$. This branch is a continuation of the $Ring^+$ branch. The solutions with $\bar{\lambda}^2=500$ are plotted in Figure \ref{fig:RING_lambda_500}. When $L_2=0$, we find the $WORS$ for all $\lambda>0$. When $-1<L_2<0$, the solution exhibits a $+1$-defect at the square center, continued from the $Ring^+$ branch and hence, we refer to it as the $Ring^+$ solution.  When $L_2$ is positive and moderate in value, we recover the $Ring^+$ solution branch and and the corresponding $q_3<-s_+/6$ at the square center for negative $L_2$, but $q_3>-s_+/6$ for positive $L_2$. When $L_2$ is large enough, we recover a symmetric solution which is approximately constant,  $(0,0,s_+/3)$, away from the square edges, shown in the fourth row of Figure \ref{fig:RING_lambda_500} for $L_2=10$. We refer to this solution as the \emph{Constant} solution in the remainder of this manuscript.

\section{Asymptotic Studies}
\label{sec:asymptotics}
\subsection{ The Small $\lambda$ and Small Anisotropy $L_2\to 0$ limit}\label{sec:smallL2smalllambda}
We work at the special temperature $A=-B^2/3C$ to facilitate comparison with the results in \cite{hansiap2020}, where the authors investigate solution landscapes with $L_2 = 0.$ Notably, for $L_2=0$ and $A = -B^2/3C$, reduced LdG solutions on 2D polygons have $q_3 \equiv -s_+/6=-B/6C$ for our choice of TBCs, and it is natural to investigate the effects of the anisotropy parameter, $L_2$, on these reduced solutions in 2D frameworks. 
At $A=-B^2/3C$, the governing Euler-Lagrange equations are given by the following system of partial differential equations:
\begin{align}
\left(1+\frac{L_2}{2}\right)\Delta q_1+\frac{L_2}{2}(q_{3,yy}-q_{3,xx})&=\frac{\lambda^2}{L}q_1\left(-\frac{B^2}{3C}+2Bq_3+2C(q_1^2+q_2^2+3q_3^2)\right), \\
    \left(1+\frac{L_2}{2}\right)\Delta q_2-L_2q_{3,xy}&=\frac{\lambda^2}{L}q_2\left(-\frac{B^2}{3C}+2Bq_3+2C(q_1^2+q_2^2+3q_3^2)\right), \\
    \left(1+\frac{L_2}{6}\right)\Delta q_3+\frac{L_2}{6}(q_{1,yy}-q_{1,xx})-\frac{L_2}{3}q_{2,xy}&=\frac{\lambda^2}{L}q_3\left(-\frac{B^2}{3C}-Bq_3+2C(q_1^2+q_2^2+3q_3^2)\right)+\frac{\lambda^2B}{3L}(q_1^2+q_2^2),
    \label{q1q2q3}
\end{align}
satisfying $q_1=q_{b}$, $q_2=0$, and $q_3=-B/6C$ on $\pp\Omega$. We take a regular perturbation expansion of these functions in the $L_2\to0$ limit. The leading order approximation is given by the $WORS$, $(q,0,-B/6C)$, where $q$ is a solution of the Allen-Cahn equation, as in \cite{canevari2017order}:
\begin{gather}
\Delta q=\frac{2C\lambda^2}{L}q\left(q^2-\frac{B^2}{4C^2}\right), \qquad q=q_{b}\quad\text{on}\quad\pp\Omega.  \label{WORS}
\end{gather}
We may assume that $q_1, q_2, q_3$ are expanded in powers of $L_2$ as follows:
\begin{equation}
\begin{aligned}\label{L2_0_asymptotics}
q_1(x,y)&=q(x,y)+L_2f(x,y)+\dots \\
q_2(x,y)&=L_2g(x,y)+\dots  \\
q_3(x,y)&=-\frac{B}{6C}+L_2h(x,y)+\dots     
\end{aligned}
\end{equation}
for some functions $f, g, h$ which vanish on the boundary. Up to $\mathcal{O}(L_2)$, the governing partial differential equations for $f,g,h$ are given by:
\begin{align}
\Delta f&=\frac{C\lambda^2}{L}\left(4q^2f+(2f-q)\left(q^2-\frac{B^2}{4C^2}\right)\right),    \label{feq}\\
\Delta g&=\frac{2C\lambda^2}{L}g\left(q^2-\frac{B^2}{4C^2}\right),\label{geq}\\
\Delta h&=\frac{2C\lambda^2}{L}h\left(q^2+\frac{B^2}{4C^2}\right)-\frac{1}{6}(q_{,yy}-q_{,xx}), \label{heq}
\end{align}
where $q=q_{b}, f=g=h=0$ on $\pp\Omega$. One can easily verify that the system (\ref{feq})--(\ref{heq}), are the Euler-Lagrange equations for the following functional:
\begin{align}
    F(f,g,h):=&\int_{\Omega}\left\{|\nabla f|^2+|\nabla g|^2+|\nabla h|^2+\frac{1}{3}(q_{,y}h_{,y}-q_{,x}h_{,x})\right\}\,\mathrm{dA} \nonumber \\
    &+\frac{2C\lambda^2}{L}\int_{\Omega}\left\{(f^2+g^2)(q^2-\frac{B^2}{4C^2})+h^2(q^2+\frac{B^2}{4C^2})-fq(q^2-\frac{B^2}{4C^2}-2fq)\right\}\,\mathrm{dA}.
\end{align}
For $\lambda$ small enough, one can show that there exists a unique solution $(f,g,h)\in W^{1,2}_0(\Omega;\mathbb{R}^3)$ of the system (\ref{feq})--(\ref{heq}), by following the approach in \cite{lamy2014} to show that $F$ is strictly convex in $W^{1,2}_0(\Omega;\mathbb{R}^3)$ for sufficiently small $\lambda$. Hence, for $\lambda$ small enough, we have $g\equiv0$ on $\Omega$.
Similarly to Proposition \ref{lem1}, we can check that if $(q(x,y),f(x,y),g(x,y),h(x,y))$ is a solution of \eqref{WORS},\eqref{feq}--\eqref{heq}, then the quadruplets, $(-q(y,x),-f(y,x),g(y,x),h(y,x))$ and $(q(-x,y),f(-x,y),g(-x,y),h(-x,y))$, are also solutions of \eqref{WORS},\eqref{feq}--\eqref{heq}. Thus we have $f(x,y) = 0$ on diagonals for $\lambda$ small enough. Hence, for $\lambda$ small enough, the cross structure of the $WORS$ is lost mainly because of effects of $L_2$ on the component $q_3$, as we discuss below.

 From \cite{fang2020surface}, the solutions of \eqref{WORS} with $\lambda=0$, are a good approximation to the solutions of \eqref{WORS} for sufficiently small $\lambda$. When $\lambda = 0$, $q = q_0$ where
 \begin{gather}\label{q0eq}
    \Delta q_0 = 0,\quad (x,y)\in\Omega, \qquad q_0=q_{1b},\quad\text{on}\quad\pp\Omega
\end{gather}
The analytical solution of \eqref{q0eq} is given by \cite{lewis2015defects}:
\begin{align}\label{q0so}
    q_0(x,y) &= \frac{s_+}{2}\sum_{k\ odd}\frac{4}{k\pi}\left(\sin\left(\frac{k\pi(x+1)}{2}\right)\frac{\sinh(k\pi(1-y)/2)+\sinh(k\pi(1+y)/2)}{\sinh(k\pi)}\right. \nonumber \\
    &\left.-\sin\left(\frac{k\pi(y+1)}{2}\right)\frac{\sinh(k\pi(1-x)/2)+\sinh(k\pi(1+x)/2)}{\sinh(k\pi)}\right).
\end{align}
 When $\lambda = 0$, the unique solution of \eqref{feq}--\eqref{heq} is $f = f_0\equiv 0$, $g = g_0\equiv 0$ and $h = h_0$ where
\begin{equation}\label{h0eq}
    \Delta h_0 = -\frac{1}{6}(q_{0,yy}-q_{0,xx}),
\end{equation}
with $h_0=0$ on $\pp\Omega$. In Figure \ref{fig:L2_0_asymptotic_compare}, we plot the difference between the solution, $(q_1,q_2,q_3)$ of \eqref{q1q2q3} and the triplet, $(q_0+L_2f_0,L_2g_0,-\frac{s_+}{6}+L_2h_0) = (q_0, 0 ,-\frac{s_+}{6}+L_2h_0)$, which is the solution of (\ref{feq})-(\ref{heq}) for $\lambda = 0$.

\begin{proposition}
The analytical solution of \eqref{h0eq} is given by
\begin{equation}
h_0(x,y)=\sum_{m,n\ odd}\frac{16s_+mn}{3\pi^2(m^2+n^2)^2}\sin\left(\frac{m\pi(x+1)}{2}\right)\sin\left(\frac{n\pi(y+1)}{2}\right), \label{eq:h0analytic}
\end{equation}
where $h_0(0,0)$ is positive.
\end{proposition}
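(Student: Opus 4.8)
The plan is to work on the untruncated square $\Omega=(-1,1)^2$, consistent with the closed form \eqref{q0so} for $q_0$, and to solve the Poisson problem \eqref{h0eq} by expanding in the Dirichlet eigenbasis of the Laplacian. Set $\psi_{mn}(x,y):=\sin\left(\frac{m\pi(x+1)}{2}\right)\sin\left(\frac{n\pi(y+1)}{2}\right)$ for $m,n\geq 1$; these satisfy $-\Delta\psi_{mn}=\mu_{mn}\psi_{mn}$ with $\mu_{mn}=\frac{\pi^2}{4}(m^2+n^2)$, vanish on $\pp\Omega$, and are orthogonal with $\|\psi_{mn}\|_{L^2(\Omega)}^2=1$. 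Since $h_0=0$ on $\pp\Omega$, I would write $h_0=\sum_{m,n}a_{mn}\psi_{mn}$, so that \eqref{h0eq} forces $a_{mn}=\widehat{S}_{mn}/(6\mu_{mn})$, where $\widehat{S}_{mn}:=\int_\Omega(q_{0,yy}-q_{0,xx})\psi_{mn}\,\dd A$ is the Fourier coefficient of the source. Everything then reduces to evaluating $\widehat{S}_{mn}$.

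The core computation is to evaluate $\widehat{S}_{mn}$ by integrating by parts twice in each variable. Because $q_0$ does not vanish on $\pp\Omega$ -- it equals $+s_+/2$ on the horizontal edges $y=\pm1$ and $-s_+/2$ on the vertical edges $x=\pm1$ -- the integrations by parts leave surviving boundary integrals in addition to the bulk term $(\alpha_m^2-\beta_n^2)\widehat{q}_{0,mn}$, where $\widehat{q}_{0,mn}:=\int_\Omega q_0\psi_{mn}\,\dd A$, $\alpha_m=\frac{m\pi}{2}$, and $\beta_n=\frac{n\pi}{2}$. The boundary integrals are elementary and, using $\int_{-1}^1\sin(\alpha_m(x+1))\,\dd x=\frac{2(1-(-1)^m)}{m\pi}$, produce factors $(1\pm(-1)^m)$ and $(1\pm(-1)^n)$ that vanish unless both $m$ and $n$ are odd. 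To eliminate $\widehat{q}_{0,mn}$, I would compute it separately from Green's second identity applied to the harmonic function $q_0$ and to $\psi_{mn}$: since $\Delta q_0=0$ and $\psi_{mn}=0$ on $\pp\Omega$, one gets $\mu_{mn}\widehat{q}_{0,mn}=-\int_{\pp\Omega}q_0\,\pp_\nu\psi_{mn}\,\dd s$, again an elementary boundary integral, yielding $\widehat{q}_{0,mn}=\frac{8s_+(n^2-m^2)}{\pi^2mn(m^2+n^2)}$ for odd $m,n$. Substituting this back and simplifying with $(m^2+n^2)^2-(m^2-n^2)^2=4m^2n^2$ collapses $\widehat{S}_{mn}$ to $\frac{8s_+mn}{m^2+n^2}$ for odd $m,n$ (and $0$ otherwise). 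Dividing by $6\mu_{mn}$ gives $a_{mn}=\frac{16s_+mn}{3\pi^2(m^2+n^2)^2}$, which is exactly \eqref{eq:h0analytic}.

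For the sign of $h_0(0,0)$, note that $\psi_{mn}(0,0)=\sin(m\pi/2)\sin(n\pi/2)=\sigma_m\sigma_n$ with $\sigma_m:=(-1)^{(m-1)/2}$ for odd $m$, so that $h_0(0,0)=\frac{16s_+}{3\pi^2}\sum_{m,n\ \mathrm{odd}}\frac{mn\,\sigma_m\sigma_n}{(m^2+n^2)^2}$. The alternating signs make direct estimation awkward, so I would use the Gaussian representation $\frac{1}{(m^2+n^2)^2}=\int_0^\infty t\,e^{-t(m^2+n^2)}\,\dd t$ to factor the double sum into a perfect square,
\[ h_0(0,0)=\frac{16s_+}{3\pi^2}\int_0^\infty t\,\Big(\sum_{m\ \mathrm{odd}}m\,\sigma_m\,e^{-tm^2}\Big)^2\,\dd t, \]
after which positivity is immediate because $s_+>0$ and the integrand is nonnegative and not identically zero.

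The main obstacle lies in this positivity argument rather than in the Fourier calculation, because the double series for $h_0(0,0)$ converges only conditionally -- the moduli $mn/(m^2+n^2)^2$ are not summable -- so the factorisation into a square and the interchange with the $t$-integral are not immediate. I would make them rigorous by Gaussian (Abel) regularisation: for $\varepsilon>0$ the weighted sum $T_\varepsilon:=\sum_{m,n\ \mathrm{odd}}\frac{mn\,\sigma_m\sigma_n}{(m^2+n^2)^2}e^{-\varepsilon(m^2+n^2)}$ converges absolutely, and writing $\frac{e^{-\varepsilon(m^2+n^2)}}{(m^2+n^2)^2}=\int_\varepsilon^\infty(s-\varepsilon)e^{-s(m^2+n^2)}\,\dd s$ lets me interchange freely to obtain $T_\varepsilon=\int_\varepsilon^\infty(s-\varepsilon)f(s)^2\,\dd s$, where $f(s):=\sum_{m\ \mathrm{odd}}m\,\sigma_m e^{-sm^2}$. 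Letting $\varepsilon\to0^+$, the left side tends to the series value (Abel summability of the convergent Fourier series of the smooth function $h_0$ at the interior point $0$), while the right side increases to $\int_0^\infty s\,f(s)^2\,\dd s$ by monotone convergence. The one genuinely analytic input needed is that this last integral is finite, which follows from a Poisson summation (Jacobi theta) estimate: the off-resonant phase $\sigma_m=\sin(m\pi/2)$ forces $f(s)$ to decay super-exponentially, like $s^{-3/2}e^{-\pi^2/(16s)}$, as $s\to0^+$, and $f(s)=O(e^{-s})$ as $s\to\infty$. With finiteness established, positivity follows at once. By comparison, the boundary-term bookkeeping in the computation of $\widehat{S}_{mn}$ and $\widehat{q}_{0,mn}$ is routine, requiring only care with signs.
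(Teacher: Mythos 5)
Your proposal is correct, and it differs from the paper's proof in both halves. For the Fourier coefficients, the paper substitutes the explicit series \eqref{q0so} for $q_0$ into the right-hand side of \eqref{h0eq} and matches double sine-series coefficients by direct computation; you instead never use the closed form of $q_0$ at all, extracting $\widehat{S}_{mn}$ by two integrations by parts (whose boundary terms carry the data $\pm s_+/2$) and eliminating $\widehat{q}_{0,mn}$ via Green's identity and the harmonicity of $q_0$. Your route is cleaner and more robust -- it would work for any harmonic $q_0$ with piecewise-constant boundary values -- and I checked that your intermediate values ($\widehat{q}_{0,mn}=\tfrac{8s_+(n^2-m^2)}{\pi^2 mn(m^2+n^2)}$, $\widehat{S}_{mn}=\tfrac{8s_+mn}{m^2+n^2}$, hence $a_{mn}=\tfrac{16s_+mn}{3\pi^2(m^2+n^2)^2}$) are all correct. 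For positivity, the paper argues by hand: it isolates the diagonal $m=n$ contribution (equal to $s_+/6$), discards the manifestly nonnegative off-diagonal terms with $m-n\equiv 0\pmod 4$, and bounds the remaining negative terms by $s_+/9$ via the substitution $k=m+n$, $l=m-n$, yielding $h_0(0,0)\ge s_+/6-s_+/9>0$. You instead write $\tfrac{1}{(m^2+n^2)^2}=\int_0^\infty t\,e^{-t(m^2+n^2)}\,\dd t$ to factor the double sum into $\int_0^\infty t\,f(t)^2\,\dd t\ge 0$, which is conceptually sharper (it shows the sum is a manifestly nonnegative quantity rather than merely bounding it below), at the price of having to justify the interchange for a conditionally convergent double series -- and you do address this correctly via Gaussian regularisation, monotone convergence, and a theta-function decay estimate for $f$ near $s=0$. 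Your explicit attention to the conditional convergence of $\sum mn/(m^2+n^2)^2$ is a point the paper's own manipulation passes over silently, so this is a genuine strengthening, not just an alternative.
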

\begin{proof}
Firstly, we calculate the analytical solution of \eqref{h0eq}.
Substituting \eqref{q0so} into \eqref{h0eq}, we have
\begin{align}
    \Delta h_0 &=-\sum_{k\ odd}\frac{s_+k\pi}{6\sinh(k\pi)}\left\{\sin\left(\frac{k\pi(x+1)}{2}\right)\left(\sinh\left(\frac{k\pi(1-y)}{2}\right)+\sinh\left(\frac{k\pi(1+y)}{2}\right)\right)\right.\nonumber\\
    &\left.+\sin\left(\frac{k\pi(y+1)}{2}\right)\left(\sinh\left(\frac{k\pi(1-x)}{2}\right)+\sinh\left(\frac{k\pi(1+x)}{2}\right)\right)\right\}, \label{hexpand}
\end{align}
which is a homogeneous Poisson equation. We consider the transformations $\hat{x}=x+1$ and $\hat{y}=y+1$, such that $h_0(\hat{x},\hat{y}):[0,2]^2\to\mathbb{R}$. We apply the method of eigenfunction expansions
\begin{gather}
    h_0(\hat{x},\hat{y})=\sum_{n=1}^{\infty}\sum_{m=1}^{\infty}E_{mn}\sin\left(\frac{m\pi\hat{x}}{2}\right)\sin\left(\frac{n\pi\hat{y}}{2}\right),
\end{gather}
where $E_{mn}$ are double Fourier sine series coefficients.
Following standard Fourier series-type calculations, we obtain:
\begin{gather}
    h_0(\hat{x},\hat{y})=\sum_{m,n\ odd}\frac{16s_+mn}{3\pi^2(m^2+n^2)^2}\sin\left(\frac{m\pi\hat{x}}{2}\right)\sin\left(\frac{n\pi\hat{y}}{2}\right),
\end{gather}
which yields the form (\ref{eq:h0analytic}). Substituting $(x,y) = (0,0)$ i.e., $(\hat{x},\hat{y}) = (1,1)$ into \eqref{eq:h0analytic}, we obtain
\begin{align}
 h_0(1,1) &= \frac{16s_+}{3\pi^2}\sum_{m,n\ odd}\frac{mn}{(m^2+n^2)^2}\sin\left(\frac{m\pi}{2}\right)\sin\left(\frac{n\pi}{2}\right)\nonumber \\
 &=\frac{16s_+}{3\pi^2}\sum_{m,n\ odd}\frac{mn}{(m^2+n^2)^2}(-1)^{\frac{m+n-2}{2}} \nonumber \\
 &=\frac{16s_+}{3\pi^2}\sum_{m\ odd}\frac{1}{4m^2} + \frac{32s_+}{3\pi^2} \sum_{m>n\ odd}\frac{mn}{(m^2+n^2)^2}(-1)^{\frac{m+n-2}{2}} \nonumber \\
  &=\frac{s_+}{6}+  \frac{32s_+}{3\pi^2} \sum_{m>n\ odd}\frac{mn}{(m^2+n^2)^2}(-1)^{\frac{m+n-2}{2}}\label{h000}
\end{align}
The summed term in \eqref{h000} is
\begin{align}
 \sum_{m>n\ odd}\frac{mn}{(m^2+n^2)^2}(-1)^{\frac{m+n-2}{2}}&= \sum_{n\ odd,m = n+4s-2}\frac{mn}{(m^2+n^2)^2}(-1)^{\frac{m+n-2}{2}} \nonumber \\&+ \sum_{n\ odd,m = n+4s}\frac{mn}{(m^2+n^2)^2}(-1)^{\frac{m+n-2}{2}} \nonumber \\
&\geq  \sum_{n\ odd,m = n+4s-2}\frac{mn}{(m^2+n^2)^2}(-1)^{\frac{m+n-2}{2}}\label{mn}
\end{align}
Setting $k = m+n$, $l = m-n$, we have
$mn/(m^2+n^2)^2 = (k^2-l^2)/(k^2+l^2)^2$. Since $n$ is odd, we have $k = m+n = 2n+4s-2 = 2(2p+1)+4s-2 = 4(p+s) = 4r$, $(p\geq0,s,r\geq1)$, and $l = m-n = 4s-2$. Substituting $k = m+n$, $l = m-n$ into \eqref{mn}, we obtain
\begin{align}
 \sum_{n\ odd,m = n+4s-2}\frac{mn}{(m^2+n^2)^2}(-1)^{\frac{m+n-2}{2}}&=\sum_{k>l,k = 4r,l=4s-2}\frac{k^2-l^2}{(k^2+l^2)^2}(-1)^{\frac{k-2}{2}} \nonumber \\
&=-\sum_{k>l,k = 4r,l=4s-2}\frac{k^2-l^2}{(k^2+l^2)^2} \nonumber\\
&\geq -\sum_{k = 4r}\frac{1}{k^2} = -\sum_{r = 1}^{\infty}\frac{1}{16r^2} = -\frac{\pi^2}{96}
\end{align}
Hence, from \eqref{h000} we have $h_0(0,0)\geq\frac{s_+}{6}-\frac{s_+}{9}>0$.
\end{proof}
\begin{figure}[htbp]
    \begin{center}
    \includegraphics[width=0.7\columnwidth]{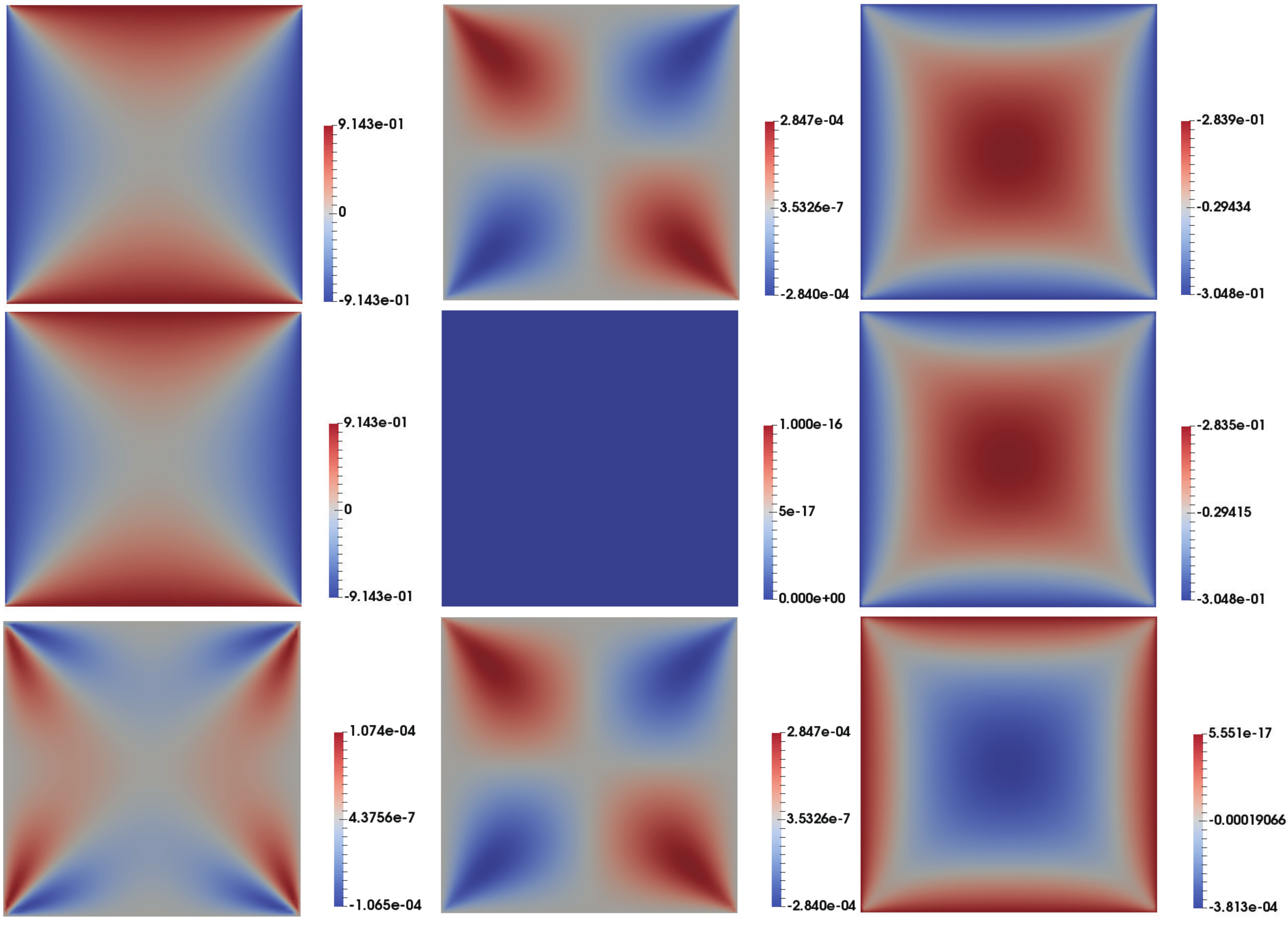}
        \caption{First row: plots of $(q_1,q_2,q_3)$, the solution of \eqref{q1eq}--\eqref{q3eq} with $\bar{\lambda}^2 = 0.01$, $L_2 = 0.1$. Second row: plots of asymptotic solution $(q_0+L_2f_0,L_2g_0,-\frac{s_+}{6}+L_2h_0) = (q_0,0,-\frac{s_+}{6}+L_2h_0)$, where $q_0$ and $h_0$ are the solutions of \eqref{q0eq} and \eqref{h0eq}, respectively. Third row: plots of the difference between solution in first and second row, $(q_1,q_2,q_3)-(q_0+L_2f_0,L_2g_0,-\frac{s_+}{6}+L_2h_0)$. } 
        \label{fig:L2_0_asymptotic_compare}
    \end{center}
\end{figure}


\subsection{The $L_2\to+\infty$ limit}

Consider the regular perturbation expansion in powers of $1/L_2$ of the solutions, $q_1, q_2, q_3$, of the Euler-Lagrange system (\ref{q1eq})--(\ref{q3eq}) subject to the boundary conditions (\ref{bcq1}) and (\ref{bcq2q3}). Then the governing Euler-Lagrange equations (\ref{q1eq})--(\ref{q3eq}) to leading order that is, $\mathcal{O}(L_2)$, are given by the following system of PDEs:
\begin{align}
\frac{1}{2}\Delta \rho+\frac{1}{2}(\tau_{yy}-\tau_{xx})&=0, \label{inf1} \\
\frac{1}{2}\Delta \sigma-\tau_{xy}&=0, \label{inf2}\\
\frac{1}{6}\Delta \tau+\frac{1}{6}(\rho_{yy}-\rho_{xx})-\frac{1}{3}\sigma_{xy}&=0, \label{inf3}
\end{align}
where the $\rho, \sigma, \tau$ are the leading order approximations of $q_1,q_2,q_3$, respectively in the $L_2\to\infty$ limit.
\begin{proposition}
The leading order system of Euler-Lagrange equations in the $L_2\to\infty$ limit, (\ref{inf1})--(\ref{inf3}), is not an elliptic PDE system.
\end{proposition}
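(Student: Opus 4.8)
The plan is to reduce the non-ellipticity assertion to a direct computation of the principal symbol of the system \eqref{inf1}--\eqref{inf3}. Writing the unknown as $\mathbf{u} = (\rho, \sigma, \tau)$, every equation is second order in each component, so the relevant notion is ellipticity in the sense of Petrovsky: the system is elliptic precisely when the determinant of its principal symbol matrix is nonzero for every real covector $\xi = (\xi_1, \xi_2) \neq 0$. I would therefore form this matrix by retaining only the highest-order (here, all second order) terms and replacing $\partial_x \mapsto \xi_1$, $\partial_y \mapsto \xi_2$.

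First I would record the principal symbol. Setting $a := \xi_1^2 + \xi_2^2$, $b := \xi_2^2 - \xi_1^2$ and $c := \xi_1\xi_2$, reading off the coefficients of $\rho, \sigma, \tau$ in \eqref{inf1}--\eqref{inf3} yields the matrix
\begin{gather}
P(\xi) = \begin{pmatrix} \tfrac{1}{2}a & 0 & \tfrac{1}{2}b \\ 0 & \tfrac{1}{2}a & -c \\ \tfrac{1}{6}b & -\tfrac{1}{3}c & \tfrac{1}{6}a \end{pmatrix}.
\end{gather}
Next I would compute $\det P(\xi)$ by expanding along the first row, which collapses to
\begin{gather}
\det P(\xi) = \frac{a}{24}\left(a^2 - b^2 - 4c^2\right).
\end{gather}
The decisive step is the elementary identity $a^2 - b^2 = (a-b)(a+b) = (2\xi_1^2)(2\xi_2^2) = 4\xi_1^2\xi_2^2 = 4c^2$, so the bracket vanishes identically and $\det P(\xi) \equiv 0$ for all $\xi$. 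Since the symbol is thus singular in every direction, the system fails the ellipticity condition, proving the claim.

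Finally, I would remark that this degeneracy is not accidental. The leading-order system is the Euler--Lagrange system of the functional obtained by keeping only the $\mathcal{O}(L_2)$ part of \eqref{pos}, namely $\tfrac{1}{2}\int_\Omega (q_{1,x}+q_{2,y}-q_{3,x})^2 + (q_{2,x}-q_{1,y}-q_{3,y})^2\,\mathrm{dA}$. This density depends only on two linear combinations of the nine first derivatives of $(q_1,q_2,q_3)$, so its symbol is a sum of two rank-one matrices $v_1(\xi)v_1(\xi)^{T} + v_2(\xi)v_2(\xi)^{T}$ acting on $\mathbb{R}^3$, forcing $\operatorname{rank} P(\xi) \leq 2$ and hence $\det P(\xi) = 0$; the explicit computation merely confirms this structural degeneracy. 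There is no genuine analytic obstacle here: the only points requiring care are the bookkeeping of the numerical coefficients and correctly invoking the systems (rather than scalar) notion of ellipticity, together with the observation that the failure is \emph{total}, i.e. the symbol is degenerate for every $\xi \neq 0$ rather than merely along isolated characteristic directions.
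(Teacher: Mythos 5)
Your proof is correct and follows essentially the same route as the paper: the matrix $P(\xi)$ you form is exactly the paper's $A\alpha^2+2B\alpha\beta+C\beta^2$ with $(\alpha,\beta)=(\xi_1,\xi_2)$, and both arguments conclude by showing its determinant vanishes identically, so the Petrovsky ellipticity condition fails. Your explicit factorization $a^2-b^2=4c^2$ and the closing rank-$\leq 2$ explanation of the degeneracy are welcome details that the paper leaves to the reader.
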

\begin{proof}
The system of equations (\ref{inf1})--(\ref{inf3}) can be written as
\begin{gather*}
    A\mathbf{q}_{0,xx}+2B\mathbf{q}_{0,xy}+C\mathbf{q}_{0,yy}=\mathbf{0},
\end{gather*}
where $\mathbf{q}_0=(\rho,\sigma,\tau)$ and
\begin{gather*}
    A=\begin{pmatrix}
    \frac{1}{2} & 0 & -\frac{1}{2} \\
    0 & \frac{1}{2} & 0 \\
    -\frac{1}{6} & 0 & \frac{1}{6}
    \end{pmatrix},
    \quad B=\begin{pmatrix}
    0 & 0 & 0 \\
    0 & 0 & -\frac{1}{2} \\
    0 & -\frac{1}{6} & 0
    \end{pmatrix},
   \quad C=\begin{pmatrix}
    \frac{1}{2} & 0 & \frac{1}{2} \\
    0 & \frac{1}{2} & 0 \\
    \frac{1}{6} & 0 & \frac{1}{6}
    \end{pmatrix}.
\end{gather*}
The system is said to be \textit{elliptic}, in the sense of I.G. Petrovsky \cite{Gu}, if the determinant
\begin{gather*}
    |A\alpha^2+2B\alpha\beta+C\beta^2|\neq0,
\end{gather*}
for any real numbers $\alpha,\beta \neq 0$. We can check that for this system, we have
\begin{gather*}
    |A\alpha^2+2B\alpha\beta+C\beta^2|\equiv 0.
\end{gather*}
for any real numbers $\alpha,\beta$.
Hence, the limiting problem (\ref{inf1})--(\ref{inf3}) is not an elliptic problem.
\end{proof}

\begin{proposition}\label{no_solution}
    There is no classical solution of the limiting problem (\ref{inf1})--(\ref{inf3}), with the boundary conditions (\ref{bcq1}) in the $\epsilon\to 0$ limit and (\ref{bcq2q3}), where $\epsilon$ is the short edge length of the truncated unit square, $\Omega$.
\end{proposition}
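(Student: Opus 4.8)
The plan is to argue by contradiction, with the decisive obstruction being the discontinuity of the $q_1$ boundary datum at the corners of the square in the $\epsilon\to 0$ limit. Suppose a classical solution $(\rho,\sigma,\tau)\in C^2(\Omega)\cap C(\bar{\Omega})$ of (\ref{inf1})--(\ref{inf3}) exists and attains the prescribed data. As $\epsilon\to 0$ the short edges $S_1,\dots,S_4$ collapse onto the vertices, so the condition (\ref{bcq1}) for $q_1=\rho$ degenerates to $\rho=s_+/2$ on the closed horizontal edges $C_1\cup C_3$ and $\rho=-s_+/2$ on the closed vertical edges $C_2\cup C_4$. This limiting datum is discontinuous at each vertex, with a jump of size $s_+$. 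Since a classical solution is continuous up to $\bar{\Omega}$, evaluating $\rho$ at the corner $(1,1)$ along $C_1$ (giving $s_+/2$) and along $C_2$ (giving $-s_+/2$) forces $s_+/2=-s_+/2$, i.e. $s_+=0$; this contradicts $s_+=(B+\sqrt{B^2+24|A|C})/(4C)>0$ for $A<0$, and so no classical solution can exist.

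To show that this obstruction is intrinsic to the degenerate $L_2\to\infty$ limit and cannot be absorbed into interior regularity, I would complement the corner argument with a structural reduction that makes the non-ellipticity of the previous proposition explicit. Setting $P:=\rho_x+\sigma_y-\tau_x$ and $R:=\sigma_x-\rho_y-\tau_y$, a direct computation shows that (\ref{inf1})--(\ref{inf3}) are equivalent to $P_x-R_y=0$, $P_y+R_x=0$, and $P_x+R_y=0$. Adding and subtracting the first and third relations gives $P_x=R_y=0$, so $P=P(y)$ and $R=R(x)$, and the middle relation then forces $P'(y)=-R'(x)=c$ for a constant $c$; hence any classical solution must satisfy $P\equiv cy+a$ and $R\equiv -cx+b$. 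Reading this against the boundary data, tangential derivatives of $\rho,\sigma,\tau$ vanish on the open long edges, so evaluating $P$ and $R$ there expresses the normal derivatives of $\sigma$ in terms of $a,b,c$; integrating $P,R$ over $\bar{\Omega}$ with $\sigma\equiv 0$ and $\tau\equiv -s_+/6$ on $\partial\Omega$ pins $a=b=0$, while matching $\nabla\sigma$ at a corner forces $c=0$, so in fact $P\equiv R\equiv 0$. The upshot is that the leading operator controls only the affine combinations $P$ and $R$ and never constrains $q_1$ strongly enough to regularise its prescribed corner jump, which is precisely the fingerprint of the lost ellipticity.

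The main obstacle I anticipate is conceptual rather than computational: one must fix precisely what is meant by a \emph{classical} solution of the limiting problem, since the elliptic analogue (for instance the harmonic function $q_0$ of (\ref{q0eq})) likewise fails to be continuous at the corners and yet is a perfectly acceptable weak solution. The statement should therefore be read as asserting that no function continuous up to the closed square can attain the limiting data, and the role of the structural reduction is to confirm that the degenerate system cannot trade the prescribed jump in $q_1$ for interior smoothness. Making this trade-off impossibility rigorous—rather than merely observing the corner incompatibility—is the delicate point, and the reduction to $P\equiv R\equiv 0$ is the mechanism by which I would secure it.
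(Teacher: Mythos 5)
Your lead argument---the jump of $q_{1b}$ at the vertices in the $\epsilon\to 0$ limit---trivialises the proposition rather than proving it. If ``classical'' is read so strictly that the solution must be continuous on $\bar{\Omega}$ and attain the datum at every boundary point including the corners, then non-existence follows from the discontinuity of the data alone and has nothing to do with the system (\ref{inf1})--(\ref{inf3}): the identical statement would hold for the Laplacian, whose solution $q_0$ with exactly these piecewise-constant data the paper uses without hesitation in (\ref{q0so}). You flag this yourself, correctly. The actual content of the proposition, as the paper's proof shows, is that the degenerate system is incompatible with the data \emph{on the interiors of the edges}, independently of any corner behaviour. The paper's route is: the $L_2\to\infty$ constraint gives the first-order system $(\rho-\tau)_x+\sigma_y=0$, $\sigma_x-(\rho+\tau)_y=0$; the piecewise-constant data kill the tangential derivatives on the long edges, so $\partial_\nu\sigma=0$ in addition to $\sigma=0$ on $\partial\Omega$, whence $\sigma\equiv 0$; then $\tau_{xy}=0$ with constant boundary data forces $\tau\equiv -s_+/6$; and the first-order system then collapses to $\rho_x=\rho_y=0$, so $\rho$ is constant --- contradicting $\rho=s_+/2$ on $C_1$ and $\rho=-s_+/2$ on $C_2$, a contradiction seated on the open edges.

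Your structural reduction is correct and is in fact an elegant way to recover the paper's first-order constraint directly from the second-order system: (\ref{inf1})--(\ref{inf3}) are precisely $P_x-R_y=0$, $P_y+R_x=0$, $P_x+R_y=0$ with $P=\rho_x+\sigma_y-\tau_x$ and $R=\sigma_x-\rho_y-\tau_y$, giving $P=cy+a$, $R=-cx+b$; your divergence-theorem computation for $a=b=0$ works, though the step forcing $c=0$ by matching $\nabla\sigma$ at a vertex assumes $C^1$ regularity up to the corner, which is more than ``classical'' grants and is somewhat circular given that corner regularity is exactly what is in dispute. The genuine gap is that you stop at $P\equiv R\equiv 0$ and substitute a heuristic about lost ellipticity for the contradiction. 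But $P\equiv R\equiv 0$ is a perfectly consistent first-order system --- it is the paper's equations (\ref{hopf1})--(\ref{hopf2}), the \emph{starting point} of its argument, not the end. The missing chain is: $P\equiv R\equiv 0$ plus the edge data $\Rightarrow$ $\partial_\nu\sigma=0$ $\Rightarrow$ $\sigma\equiv 0$ $\Rightarrow$ $\tau\equiv -s_+/6$ $\Rightarrow$ $\rho$ constant $\Rightarrow$ contradiction with the boundary values of $\rho$ on adjacent edges. Without those steps your proposal establishes only the trivial corner statement, which is not what the proposition asserts.
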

\begin{proof}
As $L_2\to\infty$, the minimizers $(q_1,q_2,q_3)$ of the energy $J$ in \eqref{funcq123}, with $f_{el}$ as in \eqref{pos}, are constrained to satisfy \begin{gather*}
    f_{div}(q_1,q_2,q_3)=(q_{1,x}+q_{2,y}-q_{3,x})^2+(q_{2,x}-q_{1,y}-q_{3,y})^2=0,\quad\text{a.e.}\quad(x,y)\in\Omega,
\end{gather*}
subject to the Dirichlet TBCs (\ref{bcq1}) and (\ref{bcq2q3}). Up to $\mathcal{O}(L_2)$, this corresponds to the following PDEs for the leading order approximations $\rho,\sigma,\tau$:
\begin{align}
    &(\rho-\tau)_{x}+\sigma_{y}=0,\label{hopf1}\\
    &\sigma_{x}-(\rho+\tau)_{y}=0, \label{hopf2}
\end{align} 
almost everywhere, subject to the same TBCs, $\rho=q_{b}$, $\sigma=0$, $\tau =-s_+/6$ on $\pp\Omega$. As $\epsilon\to0$, the boundary conditions for $\rho,\sigma,\tau$ are piecewise constant, and hence the tangential derivatives of $\rho,\sigma$ and $\tau$ vanish on the long square edges.
On $y = \pm 1$, the tangential derivative $(\rho-\tau)_{x}=0$, hence we obtain $\sigma_{y} = 0$ in \eqref{hopf1}.
Similarly, we have $\sigma_{x} = 0$ on $x = \pm 1$.
This implies that
$\pp_{\nu} \sigma=0$ on $\pp\Omega$, where $\pp_{\nu}$ is the outward pointing normal derivative, and we view the equation (\ref{inf2}) to be of the form
\begin{gather*}
\Delta \sigma=u(x,y),\qquad \left.\sigma\right|_{\pp\Omega}=0.    
\end{gather*}
By the Hopf Lemma, when $\pp_{\nu}=0$, we have $\sigma\equiv0$. Following the same arguments as in Proposition \ref{prop:Ring_non_constant}, this requires that $\tau\equiv-s_+/6$. Substituting $\tau\equiv-s_+/6$ into equations (\ref{hopf1}) and (\ref{hopf2}), we obtain $\rho_{x}=\rho_{y}=0$, contradicting the boundary condition (\ref{bcq1}). Hence, there are no classical solutions of the system (\ref{inf1})--(\ref{inf3}).
\end{proof}

Although there is no classical solution of \eqref{inf1}--\eqref{inf3} subject to the imposed boundary conditions, we can use finite difference methods to calculate a numerical solution, see Figure \ref{fig:L2_inf}. We label this solution $(\rho,\sigma,\tau)\equiv(0,0,s_+/3)$ on $\Omega$ as the \emph{Constant} solution, where $\rho$ and $\tau$ are discontinuous on $\partial\Omega$.
\begin{figure}[htb!]
    \begin{center}
     \includegraphics[width=\columnwidth]{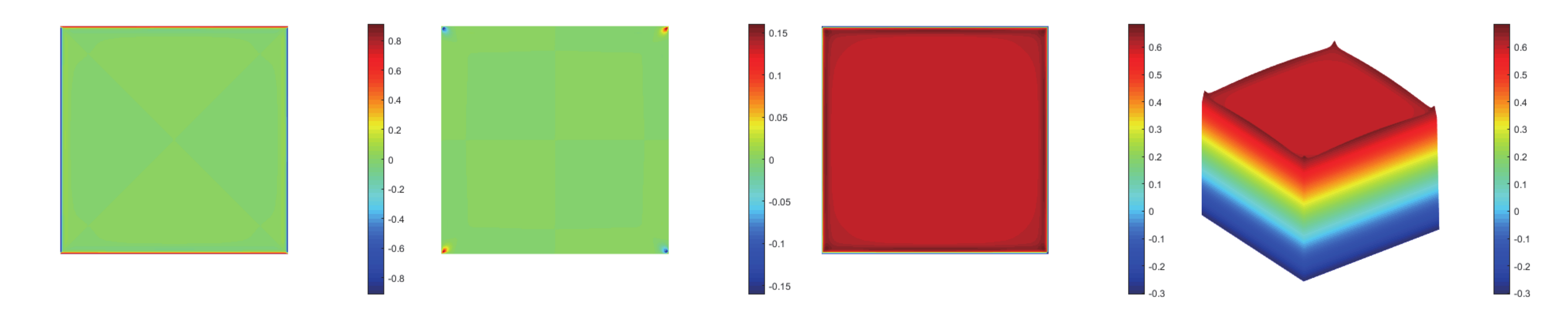}
         \caption{Solutions, $\rho,\sigma,\tau$, of the leading order Euler-Lagrange system \eqref{inf1}--\eqref{inf3} in the $L_2\to\infty$ limit. Plots of (from left to right) $\rho,\sigma,\tau$, and $(x,y,\tau)$.}
         \label{fig:L2_inf}
    \end{center}   
\end{figure}

We now give a heuristic argument to explain the emergence of the Constant solution in the interior of $\Omega$, as $L_2\to\infty$. Assuming $\rho=a$, $\sigma=b$, and $\tau=c$, where $a,b,c\in\mathbb{R}$ are constants, we have $f_{div}=0$ in $\Omega$ up to $\mathcal{O}(L_2)$. On the boundary, using finite difference methods, the first derivatives e.g., $\rho_{x}$, are calculated from the difference between the interior value and the value on the boundary i.e., $\rho_{x}\vert_{x = -1} = (\rho\vert_{interior}-\rho\vert_{boundary})/h = (a-(-s_+/2))/h$, where $h$ is the size of square mesh.
We compute the choices of $a,b$, and $c$, that ensure $f_{div} = 0$ on the boundary, below:
\begin{align*}
(a+\frac{s_+}{2}-c-\frac{s_+}{6})^2 + b^2 &= 0\quad \text{on}\,\, x = -1,\\
(-a-\frac{s_+}{2}+c+\frac{s_+}{6})^2 + (-b)^2 &= 0\quad \text{on}\,\, x = 1,\\
b^2 + (-a+\frac{s_+}{2}-c-\frac{s_+}{6})^2 &= 0\quad \text{on}\,\, y = -1,\\
(-b)^2 + (a-\frac{s_+}{2}+c+\frac{s_+}{6})^2 &= 0\quad \text{on}\,\, y = 1,
\end{align*}
and hence $a = b = 0$, $c = s_+/3$.
Therefore, $\rho = \sigma=0$, $\tau =s_+/3$ is the unique stable solution of \eqref{inf1}--\eqref{inf2}, except for zero measure sets and we label $(q_1,q_2,q_3) = (0,0,s_+/3)$ as the physically relevant Constant solution in the $L_2 \to \infty$ limit. This is consistent with the numerical results in Figure \ref{fig:RING_lambda_5}. 

\subsection{The $\lambda\to\infty$ limit}
\label{sec:lambdainfty}
The set of minimizers of the thermotropic bulk energy density, $f_b$, in the $(q_1,q_2,q_3)$-plane can be written as $\mathbb{S}  = \mathbb{S}_1\cup\mathbb{S}_2$, where
\begin{gather}
\mathbb{S}_1 = \left\{(q_1,q_2,q_3):q_1^2+q_2^2=\frac{s_+^2}{4},\,\,q_3 = -\frac{s_+}{6}\right\},\qquad \mathbb{S}_2 = \left(0,0,\frac{s_+}{3}\right).
\end{gather}
The $\lambda\to\infty$ limit is equivalent to the vanishing elastic constant limit, and the bulk energy density, $f_b$, converges uniformly to its minimum value in this limit \cite{majumdar2010landau}.
\begin{proposition}\label{Oseen_frank_limit}
Let $\Omega\in \mathbb{R}^2$ be a simply connected bounded open set with smooth boundary. Let $(q_1^{\lambda},q_2^{\lambda},q_3^{\lambda})$ be a global minimizer of $J(q_1,q_2,q_3)$ in the admissible class $\mathcal{A}_0$ in \eqref{Ao}, when $L_2>-1$. Then there exists a sequence $\lambda_k\to\infty$ such that $(q_1^{\lambda_k},q_2^{\lambda_k},q_3^{\lambda_k})\to(q_1^{\infty},q_2^{\infty},q_3^{\infty})$ strongly in $W^{1,2}(\Omega;\mathbb{R}^3)$ where
$(q_1^{\infty},q_2^{\infty},q_3^{\infty})\in\mathbb{S}$.
If $(q_1^{\infty},q_2^{\infty},q_3^{\infty})\in \mathbb{S}_1$, i.e.,
\begin{gather}
    q_1^{\infty} = \frac{s_+}{2}\cos(2\theta^{\infty}),\quad q_2^{\infty} =  \frac{s_+}{2}\sin(2\theta^{\infty}),\quad q_3^{\infty} = -\frac{s_+}{6},
\end{gather} 
then $\theta^{\infty}$ is a minimizer of 
\begin{equation}\label{theta}
\int_{\Omega}|\nabla \theta|^2\textrm{dA},
\end{equation}
in the admissible class
\begin{equation}
\mathcal{A}_{\theta}=\{\theta\in 
W^{1,2}(\Omega);\theta = \theta_b\ on\ \partial\Omega\},
\end{equation}
The boundary condition, $\theta_b$, is compatible with $(q_1,q_2)$ on $\pp\Omega$ by the relation $(q_{1b},q_{2b})= \frac{s_+}{2}(\cos(2\theta_b),\sin(2\theta_b))$.
Otherwise, $(q_1^{\infty},q_2^{\infty},q_3^{\infty})(x,y)\in \mathbb{S}_2$, i.e.,
 \begin{gather}
(q_1^{\infty},q_2^{\infty},q_3^{\infty}) = \left(0,0,\frac{s_+}{3}\right).
\end{gather}
\end{proposition}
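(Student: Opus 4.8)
The plan is to treat the $\lambda\to\infty$ limit as a vanishing elastic-constant limit and run a compactness/$\Gamma$-convergence argument in the spirit of \cite{majumdar2010landau}, with the zero set $\mathbb{S}$ of $f_b$ playing the role of the limiting target manifold. First I would establish a uniform (in $\lambda$) elastic-energy bound by comparison with a fixed competitor. Take $\tilde{q}_3\equiv-s_+/6$ and $(\tilde{q}_1,\tilde{q}_2)=\tfrac{s_+}{2}(\cos 2\tilde\theta,\sin 2\tilde\theta)$, where $\tilde\theta\in W^{1,2}(\Omega)$ is any fixed extension of $\theta_b$. This competitor lies pointwise in $\mathbb{S}_1$, so $f_b(\tilde{q})=\min f_b$ everywhere and its elastic energy $C_0(L_2)$ is finite and independent of $\lambda$. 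Writing the renormalised functional with $\bar f_b:=\tfrac1L(f_b-\min f_b)\ge 0$ and using minimality of $(q_1^\lambda,q_2^\lambda,q_3^\lambda)$ gives both $\int_\Omega f_{el}(q^\lambda)\,\mathrm{dA}\le C_0$ and $\int_\Omega\bar f_b(q^\lambda)\,\mathrm{dA}\le C_0 L/\lambda^2\to0$. Coercivity \eqref{posl2} (and its $L_2<0$ analogue coming from \eqref{neg}), the fixed Dirichlet data, and Poincar\'e then yield a uniform $W^{1,2}$ bound.

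Next I would extract a subsequence $\lambda_k\to\infty$ with $(q_1^{\lambda_k},q_2^{\lambda_k},q_3^{\lambda_k})\rightharpoonup(q_1^\infty,q_2^\infty,q_3^\infty)$ weakly in $W^{1,2}$, strongly in $L^2$ by Rellich, and a.e.\ along a further subsequence. Since $\bar f_b\ge0$ and $\int_\Omega\bar f_b(q^{\lambda_k})\to0$, continuity of $f_b$ and Fatou force $f_b(q^\infty)=\min f_b$ a.e., i.e.\ $(q_1^\infty,q_2^\infty,q_3^\infty)\in\mathbb{S}$ a.e. The key structural observation is that the two pieces of $\mathbb{S}$ are separated in the $q_3$-coordinate ($q_3=-s_+/6$ on $\mathbb{S}_1$ versus $q_3=s_+/3$ on $\mathbb{S}_2$), so $q_3^\infty$ takes a.e.\ only these two values; a $W^{1,2}$ function on the connected set $\Omega$ valued in a discrete set is a.e.\ constant. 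This produces the dichotomy: either $q_3^\infty\equiv-s_+/6$, placing the limit in $\mathbb{S}_1$, or $q_3^\infty\equiv s_+/3$, forcing $(q_1^\infty,q_2^\infty,q_3^\infty)\equiv(0,0,s_+/3)$ as claimed in the $\mathbb{S}_2$ alternative.

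In the $\mathbb{S}_1$ case, $(q_1^\infty,q_2^\infty)$ is a $W^{1,2}$ map into the circle of radius $s_+/2$, and since $\Omega$ is simply connected it admits a single-valued $W^{1,2}$ lifting $(q_1^\infty,q_2^\infty)=\tfrac{s_+}{2}(\cos 2\theta^\infty,\sin 2\theta^\infty)$. Substituting $\nabla q_3=0$ and this parametrisation into $f_{el}$ from \eqref{funcq123}, the $(3+L_2/2)|\nabla q_3|^2$ term and every $L_2$-cross term involving $\nabla q_3$ drop out, and the Jacobian $q_{2,y}q_{1,x}-q_{1,y}q_{2,x}$ vanishes pointwise for any curve-valued map, leaving $f_{el}=(1+\tfrac{L_2}{2})s_+^2|\nabla\theta^\infty|^2$. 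As $1+L_2/2>0$ for $L_2>-1$, minimising the limiting elastic energy over $\mathbb{S}_1$ is exactly minimising $\int_\Omega|\nabla\theta|^2$. To promote ``limit'' to ``minimiser'' I would pair weak lower semicontinuity of the convex elastic energy (the $\liminf$ bound) with a trivial recovery sequence: any admissible $\theta$ gives an $\mathbb{S}_1$-map that already satisfies $f_b=\min f_b$ and matches the Dirichlet data, so no boundary layer is needed and $\limsup J[q^{\lambda_k}]\le\int_\Omega(1+\tfrac{L_2}{2})s_+^2|\nabla\theta|^2$. Comparing against $\theta^\infty$ yields $\int_\Omega|\nabla\theta^\infty|^2\le\int_\Omega|\nabla\theta|^2$ for all competitors, and the resulting convergence of energies upgrades weak to strong $W^{1,2}$ convergence by the standard convexity argument.

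The principal obstacle is this last identification: one must simultaneously guarantee the $W^{1,2}$ lifting on the simply connected domain (so that the circle-valued limit even has a finite-energy phase) and match the $\Gamma$-$\liminf$ bound with the recovery sequence. A genuinely delicate secondary point concerns the $\mathbb{S}_2$ alternative: the constant limit $(0,0,s_+/3)$ cannot match the boundary trace $q_3=-s_+/6$, so strong $W^{1,2}(\Omega)$ convergence to it is incompatible with the fixed trace and must instead be understood away from a boundary layer. Quantifying that layer shows its cost grows like $\lambda$, so at fixed $L_2$ the uniform bound $E_\lambda\le C_0$ actually selects the $\mathbb{S}_1$ branch; the $\mathbb{S}_2$ branch becomes competitive only once $C_0(L_2)=(1+\tfrac{L_2}{2})s_+^2\int_\Omega|\nabla\tilde\theta|^2$ is driven up by large $L_2$, which is precisely the regime where the $\emph{Constant}$ solution overtakes the in-plane states and which the separate large-$L_2$ analysis is designed to capture.
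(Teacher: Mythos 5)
Your proposal is correct and follows essentially the same route as the paper: a uniform elastic-energy bound from a fixed $\mathbb{S}_1$-valued competitor, weak $W^{1,2}$ compactness, $\int_\Omega\bar f_b\to 0$ forcing the limit into $\mathbb{S}$ a.e., the circle parametrisation reducing $f_{el}$ to $(1+\tfrac{L_2}{2})s_+^2|\nabla\theta|^2$, and a $\liminf$/$\limsup$ squeeze that upgrades weak to strong convergence. Your additional observations --- that the dichotomy between $\mathbb{S}_1$ and $\mathbb{S}_2$ follows from $q_3^\infty$ being a $W^{1,2}$ function valued in a two-point set on a connected domain, that the lifting $\theta^\infty$ requires the simply connected hypothesis, and that the $\mathbb{S}_2$ alternative is incompatible with strong $W^{1,2}$ convergence under the fixed Dirichlet trace (which the paper only addresses later via the $L^1$/geodesic-distance functional $G_\infty$) --- are correct refinements of points the paper leaves implicit.
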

\begin{proof}
Our proof is analogous to Lemma 3 of \cite{majumdar2010landau}.
Firstly, we note that $(q_1^{\infty},q_2^{\infty},q_3^{\infty})$ belongs to admissible space $\mathcal{A}_0$.
As in Proposition \ref{prop2}, we can show that the $W^{1,2}$-norms of the $(q_1^{\lambda},q_2^{\lambda},q_3^{\lambda})$'s are uniformly bounded. Hence there exists a weakly-convergent subsequence $(q_1^{\lambda_k},q_2^{\lambda_k},q_3^{\lambda_k})$ such that $(q_1^{\lambda_k},q_2^{\lambda_k},q_3^{\lambda_k})\rightharpoonup(q_1^{1},q_2^{1},q_3^{1})$ in $W^{1,2}$, for some $(q_1^{1},q_2^{1},q_3^{1})\in\mathcal{A}_0$ as $\lambda_k\to\infty$. Using the lower semicontinuity of the $W^{1,2}$ norm with respect to the weak convergence, we have that 
\begin{equation}\label{1_leq_infty}
\int_{\Omega}|\nabla(q_1^{1},q_2^{1},q_3^{1})|^2\,\textrm{dA}\leq\int_{\Omega}|\nabla(q_1^{\infty},q_2^{\infty},q_3^{\infty})|^2\,\textrm{dA}
\end{equation}
The relation in \eqref{qinfty} shows that $\int_{\Omega}\bar{f}_b(q_1^{\lambda_k},q_2^{\lambda_k},q_3^{\lambda_k})\,\textrm{dA}\leq\frac{1}{\lambda_k^2}\int_{\Omega}f_{el}(q_1^{\infty},q_2^{\infty},q_3^{\infty})\,\textrm{dA}$, where $\bar{f}_b=\frac{1}{L}(f_b-\min f_b)$, and hence
\begin{gather}
   \int_{\Omega}\bar{f}_b(q_1^{\lambda_k},q_2^{\lambda_k},q_3^{\lambda_k})\,\textrm{dA}\to0\quad as \quad \lambda_k\to\infty. 
\end{gather}
Since $\bar{f}_b(q_1,q_2,q_3)\geq 0$, $\forall (q_1,q_2,q_3)\in\mathbb{R}^3$ we have that, on a subsequence $\lambda_{k_j}$, 
\begin{gather}
  \bar{f}_b(q_1^{\lambda_{k_j}}(\mathbf{x}),q_2^{\lambda_{k_j}}(\mathbf{x}),q_3^{\lambda_{k_j}}(\mathbf{x}))\to0,
\end{gather}
for almost all $\mathbf{x}\in\Omega$.
We know that $\bar{f}_b(q_1,q_2,q_3)=0$ if, and only if, $(q_1,q_2,q_3)\in\mathbb{S}$. On the other hand, the sequence $(q_1^{\lambda_k},q_2^{\lambda_k},q_3^{\lambda_k})$ converges weakly in $W^{1,2}$ and, on a subsequence, strongly in $L^2$ to $(q_1^{1},q_2^{1},q_3^{1})$. Therefore, the weak limit $(q_1^{1}(x),q_2^{1}(x),q_3^{1}(x))$ is in the set $\mathbb{S}$ a.e. $\mathbf{x}\in\Omega$.
If $(q_1^{1}(x),q_2^{1}(x),q_3^{1}(x))$ is in the set $\mathbb{S}_1$, then $|\nabla(q_1^1,q_2^1,q_3^1)|^2 = s_+^2|\nabla\theta^1|^2$, where $q_1^1 = \frac{s_+}{2}\cos(2\theta^1)$ and $q_2^1 = \frac{s_+}{2}\sin(2\theta^1)$. In this case, we take  $(q_1^{\infty},q_2^{\infty},q_3^{\infty})\in\mathbb{S}_1$ as defined above, so that $|\nabla(q_1^{\infty},q_2^{\infty},q_3^{\infty})|^2 = s_+^2|\nabla\theta^{\infty}|^2$, where $q_1^{\infty} =  \frac{s_+}{2}\cos(2\theta^{\infty})$ and $q_2^{\infty} =  \frac{s_+}{2}\sin(2\theta^{\infty})$.
If $(q_1^{1}(x),q_2^{1}(x),q_3^{1}(x))$ is in the set $\mathbb{S}_2$, then we take $(q_1^{\infty},q_2^{\infty},q_3^{\infty}) = (q_1^{1},q_2^{1},q_3^{1}) = (0,0,s_+/3)$. Combining \eqref{1_leq_infty} with the definition of $(q_1^{\infty},q_2^{\infty},q_2^{\infty})$, we obtain that $\int_{\Omega}|\nabla\theta^1|^2\,\textrm{dA}=\int_{\Omega}|\nabla\theta^{\infty}|^2\,\textrm{dA}$ and $\int_{\Omega}|\nabla(q_1^1,q_2^1,q_3^1)|^2\,\textrm{dA}=\int_{\Omega}|\nabla(q_1^{\infty},q_2^{\infty},q_3^{\infty})|^2\,\textrm{dA}$.
Therefore,
\begin{align}
\int_{\Omega}|\nabla(q_1^{\infty}, q_2^{\infty}, q_3^{\infty})|^2\,\textrm{dA}&\leq\liminf_{\lambda_{k_j}\to\infty}\int_{\Omega}|\nabla(q_1^{\lambda_{k_j}},q_2^{\lambda_{k_j}},q_3^{\lambda_{k_j}})|^2\,\textrm{dA}\\
&\leq\limsup_{\lambda_{k_j}\to\infty}\int_{\Omega}|\nabla(q_1^{\lambda_{k_j}},q_2^{\lambda_{k_j}},q_3^{\lambda_{k_j}})|^2\,\textrm{dA}\\
&\leq\int_{\Omega}|\nabla(q_1^{\infty},q_2^{\infty},q_3^{\infty})|^2\,\textrm{dA},
\end{align}
which demonstrates that $\lim\limits_{\lambda_{k_j}\to\infty}||\nabla(q_1^{\lambda_{k_j}},q_2^{\lambda_{k_j}},q_3^{\lambda_{k_j}})||_{L^2}=||\nabla(q_1^{\infty},q_2^{\infty},q_3^{\infty})||_{L^2}$. This together with the weak convergence $(q_1^{\lambda_{k_j}},q_2^{\lambda_{k_j}},q_3^{\lambda_{k_j}})\to(q_1^{\infty},q_2^{\infty},q_3^{\infty})$ suffices to show the strong convergence $(q_1^{\lambda_{k_j}},q_2^{\lambda_{k_j}},q_3^{\lambda_{k_j}})\to(q_1^{\infty},q_2^{\infty},q_3^{\infty})$ in $W^{1,2}$. Since $f_b(q_1^{\infty},q_2^{\infty},q_3^{\infty})$ is constant for $(q_1^{\infty},q_2^{\infty},q_3^{\infty})\in\mathbb{S}_1$, we can recast our minimization problem to minimizing the elastic energy alone i.e., minimization of the limiting functional:
\begin{equation}\label{eq:J_infty}
J_{\infty} [q_1, q_2, q_3]= \int_{\Omega}f_{el}\left(q_1, q_2, q_3 \right)\,\mathrm{dA}.
\end{equation}
Substituting
\begin{gather}
q_1 = \frac{s_+}{2}\cos(2\theta),\quad
q_2 = \frac{s_+}{2}\sin(2\theta),\quad
q_3 = -\frac{s_+}{6},
\end{gather}
into \eqref{eq:J_infty}, we have that
\begin{align}
J_{\infty} = s_+^2\left(1+\frac{L_2}{2}\right)\int_{\Omega}|\nabla\theta|^2\,\mathrm{dA}. \label{upperenergybound}
\end{align}
The corresponding Euler-Lagrange equation for \eqref{theta} is simply the Laplace equation
\begin{equation}
\Delta \theta = 0,
\end{equation}
subject to $\theta = \theta_b$ on $\partial \Omega$, where $(q_1,q_2,q_3) = (\frac{s_+}{2}\cos(2\theta_b),\frac{s_+}{2}\sin(2\theta_b),-\frac{s_+}{6})$ on $\pp\Omega$. 
\end{proof}

For large $\lambda$ i.e., large square domains, there are two classes of stable equilibria which are almost in the set $\mathbb{S}_1$. The diagonal states, $D$, are such that the nematic director (in the plane) is aligned along one of the square diagonals. The rotated states, labelled as $R$ states, are such that the director rotates by $\pi$ radians between a pair of opposite square edges. There are $2$ rotationally equivalent $D$ states, and $4$ rotationally equivalent $R$ states. The corresponding boundary conditions in terms of $\theta$
are given by $\theta_b = \theta_b^D$ or $\theta_b^R$ respectively, where 
\begin{gather}
\begin{cases}
\theta_b^D &= \frac{\pi}{2},\ \text{on}\ x = \pm1,\\
\theta_b^D &= 0,\ \text{on}\ y = \pm1,
\end{cases}
\qquad
\begin{cases}
\theta_b^R &= \frac{\pi}{2},\ \text{on}\ x = -1,\\
\theta_b^R &= -\frac{\pi}{2},\ \text{on}\ x = 1,\\
\theta_b^R &= 0,\ \text{on}\ y = \pm1.
\end{cases}
\end{gather} 
\begin{figure}[htbp]
    \begin{center}
    \includegraphics[width=0.7\columnwidth]{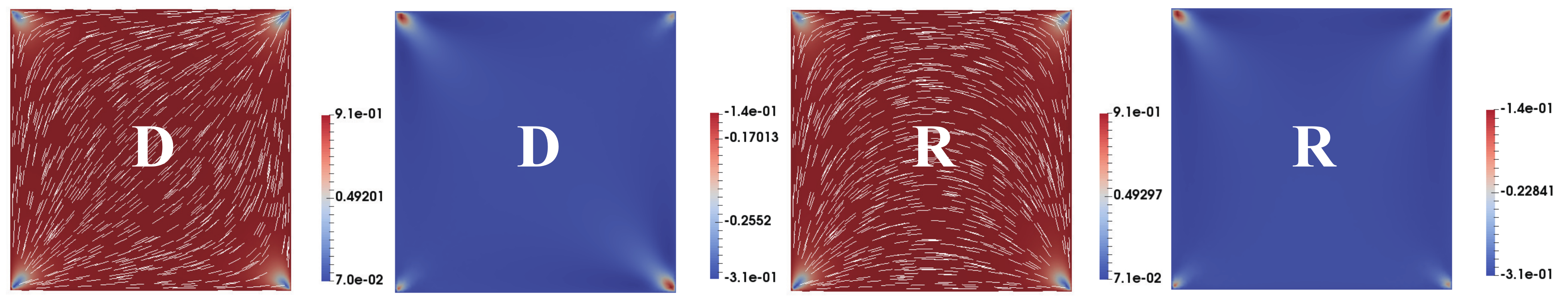}
    \caption{The $D$ and $R$ solutions of the Euler-Lagrange equation \eqref{q1eq}--\eqref{q3eq} with $L_2 = 3.5$ and $\bar{\lambda}^2= 1000$. In the first and third columns, we plot the order parameter, $s^2$, and the director profile, $\mathbf{n}$. The second and fourth columns are the corresponding plots of $q_3$. We see that $q_3$ is approximately constant on $\Omega$, except for near the vertices.}
    \label{fig:D_R}
    \end{center}
\end{figure}
\begin{figure}[htbp]
    \begin{center}
    \includegraphics[width=0.9\columnwidth]{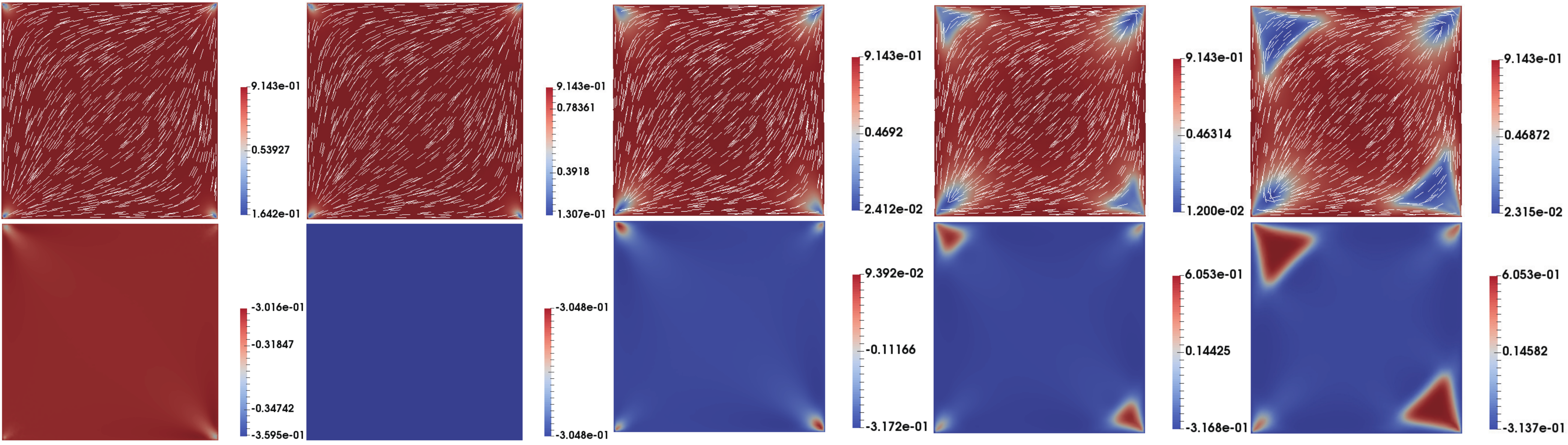}
    \caption{The $D$ solution of the equations, \eqref{q1eq}--\eqref{q3eq}, with $\bar{\lambda}^2= 1000$, with $L_2 = -0.5, 0, 10,30$ and $45$, respectively. In the first row, we plot the order parameter, $s^2$, and the director profile $\mathbf{n}$. In the second row, we plot the corresponding profiles of $q_3$. The effects of increasing $L_2$ are most pronounced near the splay vertices.}
    \label{fig:D}
    \end{center}   
\end{figure}

In Figure \ref{fig:D_R}, we plot a $D$ and $R$ solution with $L_2 = 3.5$ and $\bar{\lambda}^2 = 1000$. In Figure \ref{fig:D}, we study the effect of increasing $L_2$ on a $D$ state with $\bar{\lambda}^2 = 1000$. When $L_2 = 0$, we see that $s^2 = q_1^2+q_2^2\approx s_+^2/4$, $q_3= -s_+/6$ almost everywhere on $\Omega$. In \cite{han2020pol}, the authors show that the limiting profiles described in Proposition~\ref{Oseen_frank_limit} are a good approximation to the solutions of \eqref{q1eq}-\eqref{q3eq}, for large $\lambda$.
 The differences between the limiting profiles and the numerically computed $D$ solutions concentrate around the vertices, for large $\lambda$. A square vertex is referred to as \emph{splay} or \emph{bend} vertex, according to whether the planar director rotates by $\pi/2$ or $-\pi/2$ radians along a circle centered at the vertex, oriented in an anticlockwise sense. As $|L_2|$ increases, $q_3$ deviates significantly from the limiting value $q_3^\infty= -s_+/6$, near the square vertices; the deviation being more significant near the bend vertices compared to the splay vertices. Notably, the value of $q_3$ near the vertices increases as $L_2$ increases and, from an optical perspective, we expect to observe larger defects near the square vertices for more anisotropic materials with $L_2 \gg 1$, on large square domains. 

In \cite{lewis2015defects}, the authors compute the limiting energy, $J_\infty$, of $D$ and $R$ solutions on a unit square domain $\Omega$ to be:
\begin{align}
J_{\infty}(D) &= 2\pi s_+^2\left(1+\frac{L_2}{2}\right)\left(ln\left(\frac{1}{\epsilon}\right)+ln\left(\frac{2}{\pi}\right)+s_1-s_2+O(\epsilon^2)\right),\\
J_{\infty}(R) &= 2\pi s_+^2\left(1+\frac{L_2}{2}\right)\left(ln\left(\frac{1}{\epsilon}\right)+ln\left(\frac{2}{\pi}\right)+s_1+s_2+O(\epsilon^2)\right),
\end{align}
respectively where 
\begin{gather*}
s_1 = 2\sum_{n=0}^{\infty}\frac{\coth((2n+1)\pi)-1}{2n+1},\qquad
s_2 = 2\sum_{n=0}^{\infty}\frac{\mathrm{csch}((2n+1)\pi)}{2n+1}.
\end{gather*}
Since $\mathrm{csch}((2n+1)\pi)$ is positive, we have $J_{\infty}(D)<J_{\infty}(R)$.
The numerical values of $ln(2/\pi)+s_1-s_2$ and $ln(2/\pi)+s_1+s_2$ are approximately zero, so $J_{\infty}(D)$ and $J_{\infty}(R)$ are approximately $ln(1/\epsilon)$ for small $\epsilon$, and the limiting energies are linear in $L_2$.

The $Constant$ solution in $\mathbb{S}_2$ has transition layers on the boundary from $(0,0,s_+/3)$ to $(s_+/2,0,-s_+/6)$ or $(- s_+/2,0,-s_+/6)$.
Analogous to Section 4 of \cite{wang2019order}, we define a metric $d$ on the $(q_1, q_2, q_3)$-plane in the following way: for any two points $\mathbf{q}_0,\mathbf{q}_1\in\mathbb{R}^3$,
\begin{equation}\label{geodesic}
d(\mathbf{q}_0,\mathbf{q}_1) = \inf\left\{\int_0^1 F^{1/2}(\mathbf{q}(t))|\mathbf{q}'(t)|dt:\mathbf{q}\in C^1([0,1];\mathbb{R}^3), \mathbf{q}(0) = \mathbf{q}_0, \mathbf{q}(1) = \mathbf{q}_1\right\},
\end{equation}
which is the geodesic distance associated with the Riemannian metric $F^{1/2}$, where $F = f_b-\min f_b$. This metric is degenerate in the sense that $F(\mathbf{q}) = F(0,0,s_+/3) = 0$, $\mathbf{q}\in \mathbb{S}_1$. As in the arguments in Lemma 9 of \cite{1988The}, the infimum in \eqref{geodesic} is achieved by a minimizing geodesic for any $\mathbf{q}_0$ and $\mathbf{q_1}$. There exists a subsequence $\lambda_j\nearrow+\infty$ such that $\mathbf{q}_{\lambda_j}$ converges in $L^1(\Omega)$ and a.e., to a map of the form $\mathbf{q}_{\infty} = \mathbf{q}$, where $\mathbf{q}\in \mathbb{S}$ is a minimizer of the functional:
\begin{equation}\label{G_infty}
G_{\infty} = \int_{\partial\Omega}d(\mathbf{q}_{\infty}(\mathbf{r}),\mathbf{q}_b(\mathbf{r}))\,\mathrm{d}\mathcal{H}^1(\mathbf{r}),
\end{equation}
where 
\begin{equation}
\mathcal{H}^1(\partial\Omega) = \sup\left\{\int_{\Omega} \Div u\,\mathrm{dA}: u\in C_c^1(\Omega),|u|\leq 1\ \text{on}\ \Omega\right\}
\end{equation}
denotes the length of $\partial\Omega$ and formally is its 1-dimensional Hausdorff measure.
Let us introduce the transition cost
\begin{gather}
c_1 := d(\mathbf{q},(0,0,\frac{s_+}{3})),\quad \mathbf{q}\in\mathbb{S}_1.
\end{gather}
Using \eqref{G_infty}, the energy of this $Constant$ configuration, as $\lambda\to+\infty$, is given by $G_{\infty}(Constant) = 4c_1$.
The numerical value of $c_1$ is $41.6817$ in \cite{wang2019order}. $G_{\infty}(Constant)$ is independent of $L_2$. Hence, there is a critical value 
\begin{gather*}
    L_2^* = \frac{4c_1}{s_+^2\pi(ln(\frac{1}{\epsilon})+ln(\frac{2}{\pi})+s_1-s_2+O(\epsilon^2))}-2,
\end{gather*}
such that for $L_2>L_2^*$, the limiting $Constant$ solution is energetically preferable to the $D$ and $R$ solutions i.e., $G_{\infty}(Constant)<J_{\infty}(D)<J_{\infty}(R)$.

\subsection{The Novel pWORS Solutions}
 For all $\lambda > 0$ and $L_2 = 0$, the $WORS$ is a solution of \eqref{q1eq}--\eqref{q3eq} given by $(q,0,-B/6C)$, where $q$ satisfies \eqref{WORS}. In Section \ref{sec:smallL2smalllambda}, we study the Euler-Lagrange equations, in the small $\lambda$ and small $L_2$ limit, up to $\mathcal{O}(L_2)$; see (\ref{feq})--(\ref{heq}). However, $g\equiv 0$ is a solution of \eqref{geq} for all $\lambda$, and so we need to consider terms of $\mathcal{O}(L_2^2)$ when dealing with the $q_2$ component. We  assume that the solution, $(q_1, q_2, q_3)$, of \eqref{q1eq}-\eqref{q3eq}, can be expanded as follows:
\begin{align*}
q_1(x,y)&=q(x,y)+L_2f(x,y)+L_2^2\varphi(x,y)+\dots\\   
q_2(x,y)&=0+L_2g(x,y)+L_2^2\gamma(x,y)+\dots\\
q_3(x,y)&=-\frac{B}{6C}+L_2h(x,y)+L_2^2\mu(x,y)+\dots 
\end{align*}
Using the $\mathcal{O}(L_2)$ equations in (\ref{feq})--(\ref{heq}) with respect to the quadruples $(q,f,g,h)$ and rearranging, we can calculate the second order Euler-Lagrange system given by the corresponding partial differential equations for $\varphi, \gamma, \mu$ as shown below:
\begin{align}
\Delta \varphi+\frac{1}{2}(h_{yy}-h_{xx})=&\frac{2C\lambda^2}{L}\left\{q^2(2\varphi-f)+q(3f^2+g^2+3h^2)-\frac{1}{4}(2f-q-4\varphi)\left(q^2-\frac{B^2}{4C^2}\right)\right\}, \\
\Delta \gamma-h_{xy}=&\frac{2C\lambda^2}{L}\left\{2qfg+\left(\gamma-\frac{1}{2}g\right)\left(q^2-\frac{B^2}{4C^2}\right)\right\},\label{g2eq}\\
\Delta \mu+\frac{1}{6}(f_{yy}-f_{xx})-\frac{1}{3}g_{xy}=&\frac{2C\lambda^2}{L}\left\{2h\left(qf-\frac{B}{C}h\right)+\left(\mu-\frac{1}{6}h\right)\left(q^2+\frac{B^2}{4C^2}\right)\right\}+\frac{1}{36}(q_{yy}-q_{xx}),
\end{align}
where $\varphi=\gamma=\mu=0$ on $\pp\Omega$. In Fig. \ref{fig:qfgh}, we plot a branch of the $\gamma$ solutions of \eqref{g2eq}. As $\lambda$ increases, we observe an increasing number of zeroes on the square diagonals, where $\gamma = 0$.
\begin{figure}[htb!]
    \begin{center}
     \includegraphics[width=0.85\columnwidth]{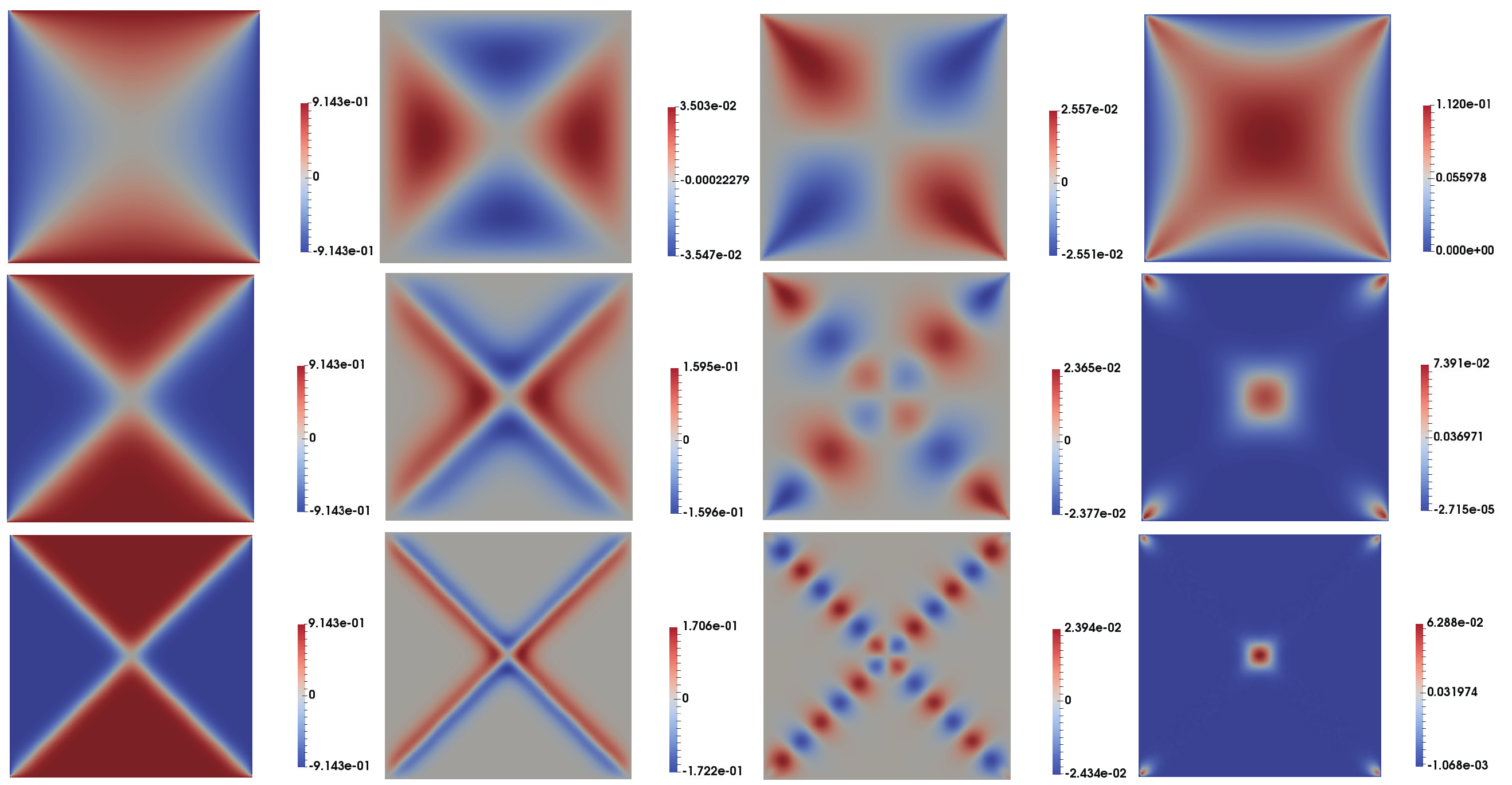}
         \caption{Plots of $q,f,\gamma$ and $h$ i.e., the solutions of (from left to right) \eqref{WORS},\eqref{feq},\eqref{g2eq} and \eqref{heq}, respectively with $\bar{\lambda}^2 = 5$, $100$ and $500$ from the first to the third row, respectively.}
          \label{fig:qfgh}
    \end{center}
\end{figure}
For any $\lambda>0$, we can use the initial condition $(q_1,q_2,q_3) = (q+L_2f,L_2g+L_2^2\gamma,-\frac{s_+}{6}+L_2h)$ to numerically find a new branch of unstable solutions, referred to as $pWORS$ configurations in Figure \ref{fig:pWORS}. $f,g,h,\gamma$ are the solutions of \eqref{feq}, \eqref{geq}, \eqref{heq}, and \eqref{g2eq}, respectively. In the $(q_1,q_2)$ plane, the $pWORS$ has a constant set of eigenvectors away from the diagonals, and has multiple $\pm 1/2$-point defects on the two diagonals, so that the $pWORS$ is similar to the $WORS$ away from the square diagonals. As $\lambda$ increases, the number of alternating $+1/2$ and $-1/2$ point defects on the square diagonals increases, for the numerically computed $pWORS$. This is mirrored by the function $\gamma$ in \eqref{g2eq} that encodes the second order effect of $L_2$ on the $WORS$.
\begin{figure}[htbp]
    \begin{center}
    \includegraphics[width=0.85\columnwidth]{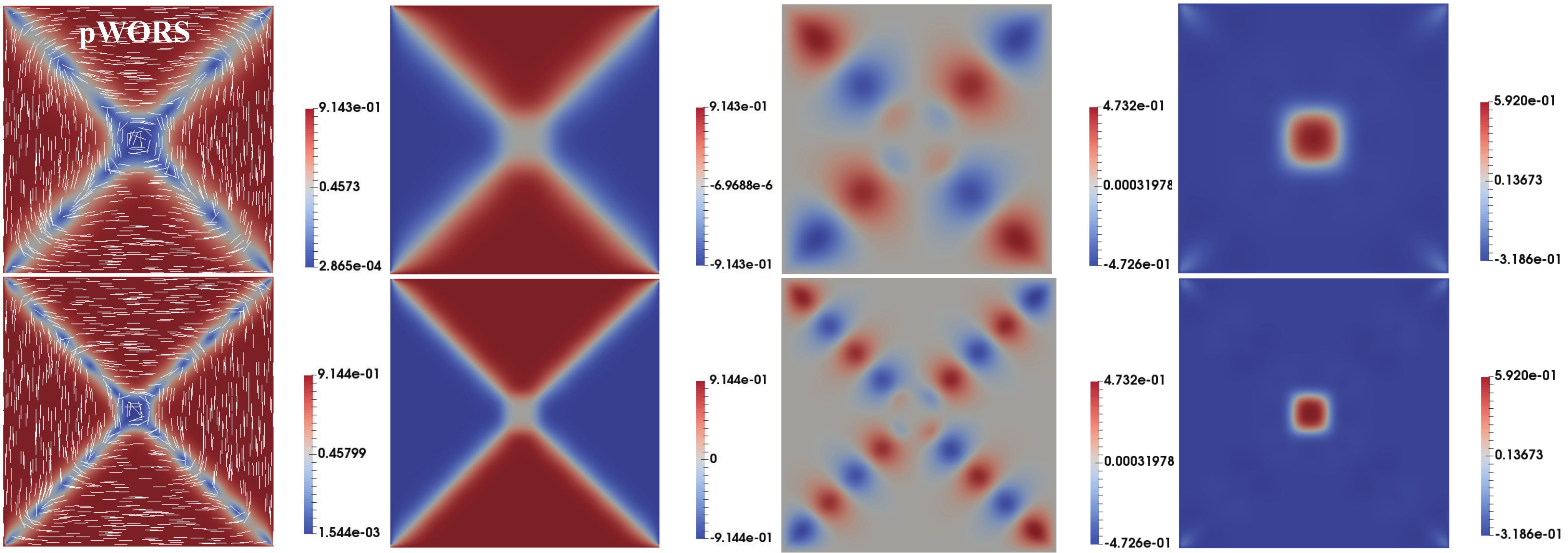}
    \caption{The configuration of the numerically computed $pWORS$. In the first column, we plot the order parameter, $s^2$, and the director profile $\mathbf{n}$. The second to fourth columns are the plots of the solutions of  the Euler-Lagrange equations \eqref{q1eq}--\eqref{q3eq}, with $L_2 = 3.5$ and $\bar{\lambda}^2 = 350,1000$ in the first and second rows, respectively.}
    \label{fig:pWORS}
    \end{center}
\end{figure}

\section{Bifurcation diagrams}
\label{sec:bifurcations}
We use the open-source package FEniCS \cite{logg2012automated} to perform all the finite-element simulations, numerical integration, and stability checks in this paper \cite{AlnaesBlechta2015a, LoggMardalEtAl2012a}. We apply the finite element method on a triangular mesh with mesh-size $h\leq 1/256$, for the discretization of a square domain. The non-linear equations \eqref{q1eq}-\eqref{q3eq} are solved by Newton's methods with a linear LU solver at each iteration. The tolerance for convergence is set to 1e-13. We check the stability of the numerical solution by numerically calculating the smallest real eigenvalue of the Hessian matrix of the energy functional \eqref{funcq123}, using the LOBPCG (locally optimal block preconditioned conjugate gradient) method \cite{knyazev2001toward}. If the smallest real eigenvalue is negative, the solution is unstable, and stable otherwise. In what follows, we compute bifurcation diagrams for the solution landscapes, as a function of $\bar{\lambda}^2$, with fixed temperature $A = -B^2/3C$, for five different values of $L_2 = 0, 1, 2.6, 3, 10$. The $C$ and $L$ are fixed material-dependent constants, so $\bar{\lambda}$ is proportional to $\lambda$ and we will use these diagrams to infer qualitative solution trends in terms of the edge length, $\lambda$. 

For $\lambda$ small enough,
there is a unique solution for any value of $L_2$; see the results in Section~\ref{sec:asymptotics}. For $L_2=0$, the unique stable solution for small enough $\lambda$
is the $WORS$. The unique solution deforms to the $Ring^+$, with a central point defect, for $L_2 =1, 2.6$. For $L_2 = 3, 10$, the unique solution is the \emph{Constant} solution, on the grounds that this solution approaches the constant state, $(q_1, q_2, q_3) \to (0, 0, s_+/3)$, in the square interior as $\lambda \to \infty$. In Figure \ref{fig:lambda_100_200}, we plot the energies of the $WORS$, $Ring^+$, and $Constant$ solutions for two distinct values of $\bar{\lambda}^2$, as a function of $L_2$. The energy is taken to be $J[q_1,q_2,q_3]$ in \eqref{funcq123} minus $\int_{\Omega}\min f_b\, \mathrm{dA}$, where the value of $\min f_b$ is in \eqref{minfb}, so that the energy is non-negative by definition.
\begin{figure}[htb!]
    \centering        
    \includegraphics[width=0.7\columnwidth]{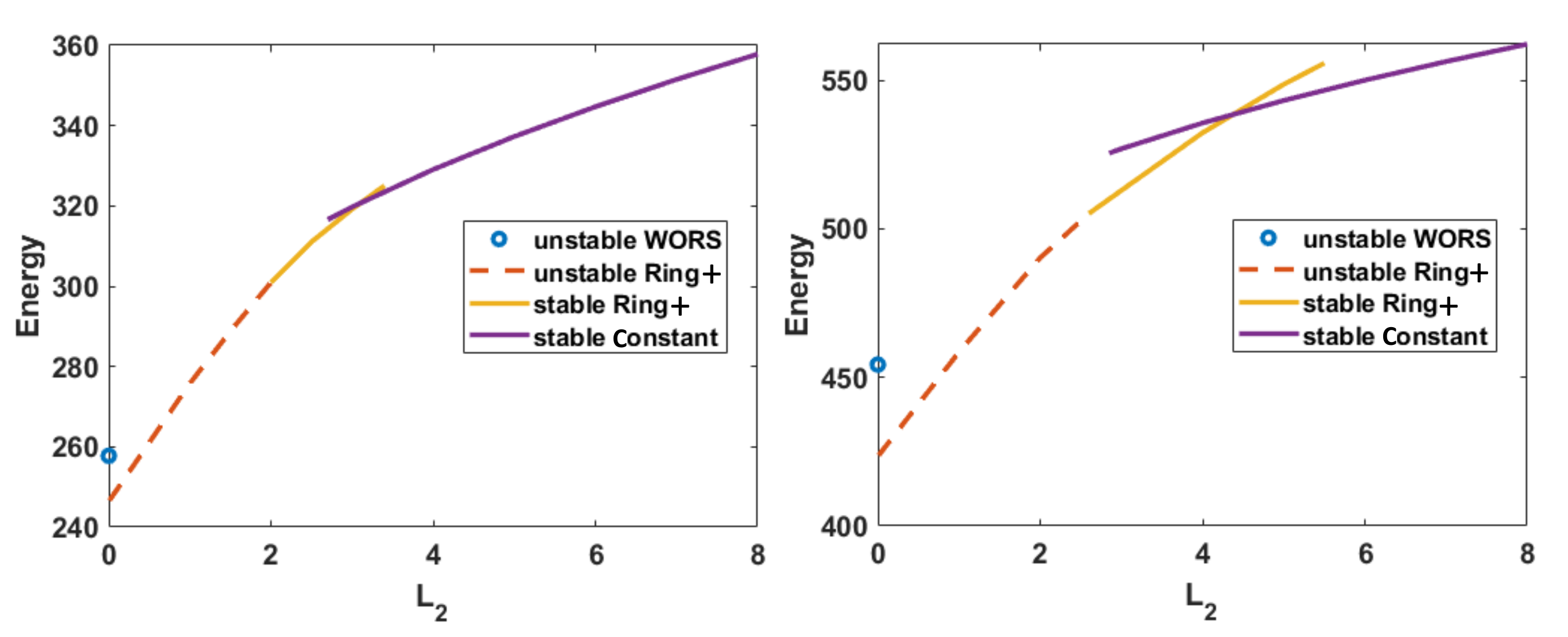}
    \caption{Energy plots, $J-\int_{\Omega}\min f_b\,dA$, verses $L_2$ when $\bar{\lambda}^2 = 100$ and $200$, respectively.}
    \label{fig:lambda_100_200}
\end{figure}

The $WORS$ only exists for $L_2 = 0$. The $Ring^+$ solution branch gains stability as $L_2$ increases. The $Ring^+$ and $Constant$ solution branches coexist for some values of $L_2$ ($2.7\leq L_2\leq 3.4$ for $\bar{\lambda}^2 = 100$, $2.85\leq L_2 \leq 5.5$ for $\bar{\lambda}^2 = 200$). When $L_2$ is large enough, the $Constant$ solution has lower energy than the $Ring^+$ solution. 
When $\lambda<\lambda_0$, there is unique solution for any $L_2$, which means 
the $WORS$, $Ring^+$, and $Constant$ solution branches are connected.

We distinguish between the distinct solution branches by defining two measures $\int_{\Omega} q_1(1+x+y)\mathrm{d}x\mathrm{d}y$ and $\int_{\Omega} q_2(1+x+y)\mathrm{d}x\mathrm{d}y$. In addition to the $WORS$, $Ring^+$, \emph{Constant} solutions, there also exist the unstable $Ring^-$ and unstable $pWORS$ solution branches with the same symmetries in Proposition \ref{prop:forever_critical}, which are indistinguishable by these measures. Hence, they appear on the same line in bifurcation diagram in Figure \ref{fig:bifurcation_diagram_10} for all $L_2>0$. The difference between the $Ring^+$, $Ring^-$, $WORS$, $Constant$, and $pWORS$ can be spotted from the associated $q_2$-profiles. If $q_2<0$ on $x=y$ and $x>0$, the corresponding solution is the $Ring^+$ solution. If $q_2>0$ on $x = y$ and $x>0$, the corresponding solution is the $Ring^-$ solution. The $Ring^+$ and $Ring^-$ solutions also exist for $L_2= 0$. If $q_2\equiv 0$, the solution is either the $WORS$ or $Constant$ solutions. If $q_2$ has isolated zero points on the square diagonals, the corresponding solution is identified to be the $pWORS$ solution branch.

We numerically solve the Euler-Lagrange equations \eqref{q1eq}-\eqref{q3eq} with $\bar{\lambda}^2 = 0.1$ by using Newton's method to obtain: the unique stable $WORS$ with $L_2 = 0$; the $Ring^+$ solution with $L_2 = 1,2.6$ and; the $Constant$ solution with $L_2 = 3,10$. The initial condition is not important here, since the solution is unique and the nonlinear term is small for $\bar{\lambda}^2 = 0.1$. We perform an increasing $\bar{\lambda}^2$ sweep for the $WORS$, $Ring^+$ and $Constant$ solution branches and a decreasing $\bar{\lambda}^2$ sweep for the diagonal $D$, and rotated $R$ solution branches (as described in the Section~\ref{sec:lambdainfty}). The stable $Ring^+$ branch for $L_2 = 3$ is obtained by taking the stable $Ring^+$ branch, with $L_2 = 2.6$ as the initial condition. The unstable $WORS$ and $Ring^+$ are tracked by continuing the stable $WORS$ and stable $Ring^+$ branches.
If the $Ring^+$ branch is given by $(q_1,q_2,q_3)$ for a fixed $L_2>0$, then the initial condition for the unstable $Ring^-$-solution is given by the corresponding $(q_1,-q_2,q_3)$ solution, for any $\lambda>0$. 
The initial condition for the unstable $pWORS$ branch is given by $(q_1,q_2,q_3) = (q+L_2f,L_2g+L_2^2\gamma,-\frac{s_+}{6}+L_2h)$, where $q,f,g,h,\gamma$ are the solutions of \eqref{feq}--\eqref{heq}, and \eqref{g2eq}, respectively, for any $\lambda>0$ (see Fig.~\ref{fig:pWORS}).


\begin{figure}
\centering
    \begin{subfigure}{0.55\textwidth}
        \centering
        \includegraphics[width=\columnwidth]{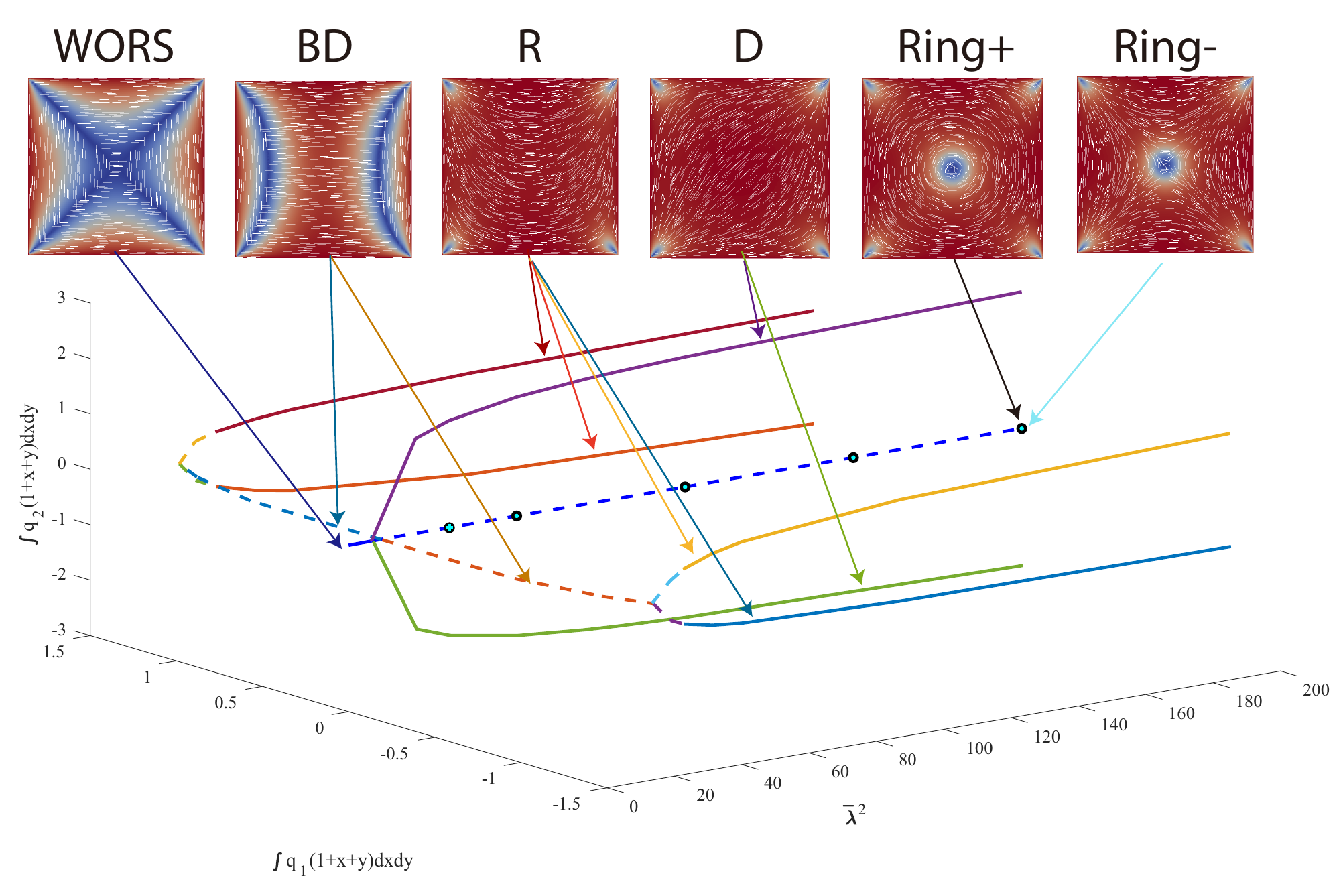}
    \end{subfigure}
        \vspace{2em}
    \begin{subfigure}{0.4\textwidth}
        \centering
        \includegraphics[width=\columnwidth]{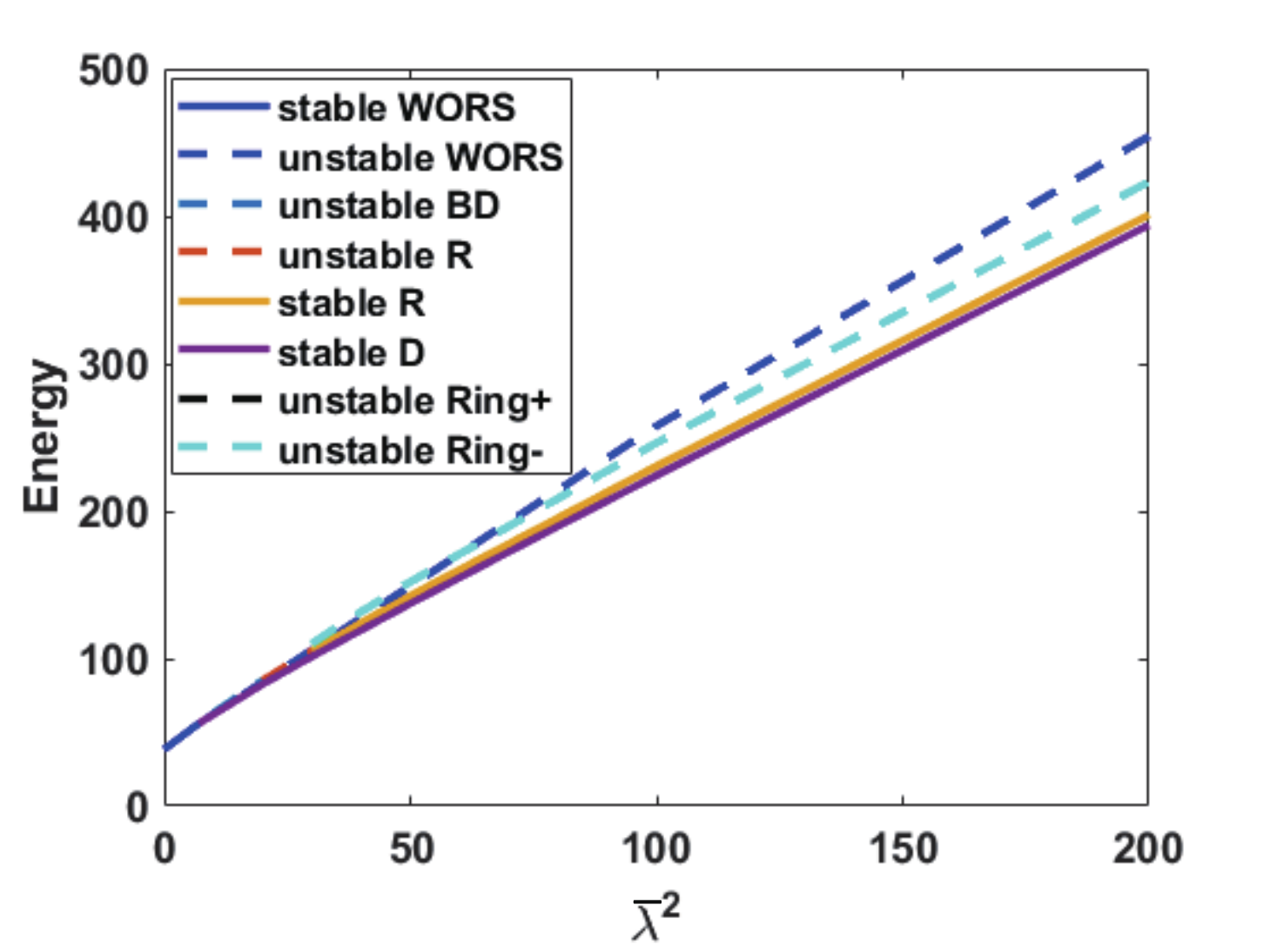}
    \end{subfigure}
    \begin{subfigure}{0.55\textwidth}
        \centering
        \includegraphics[width=\columnwidth]{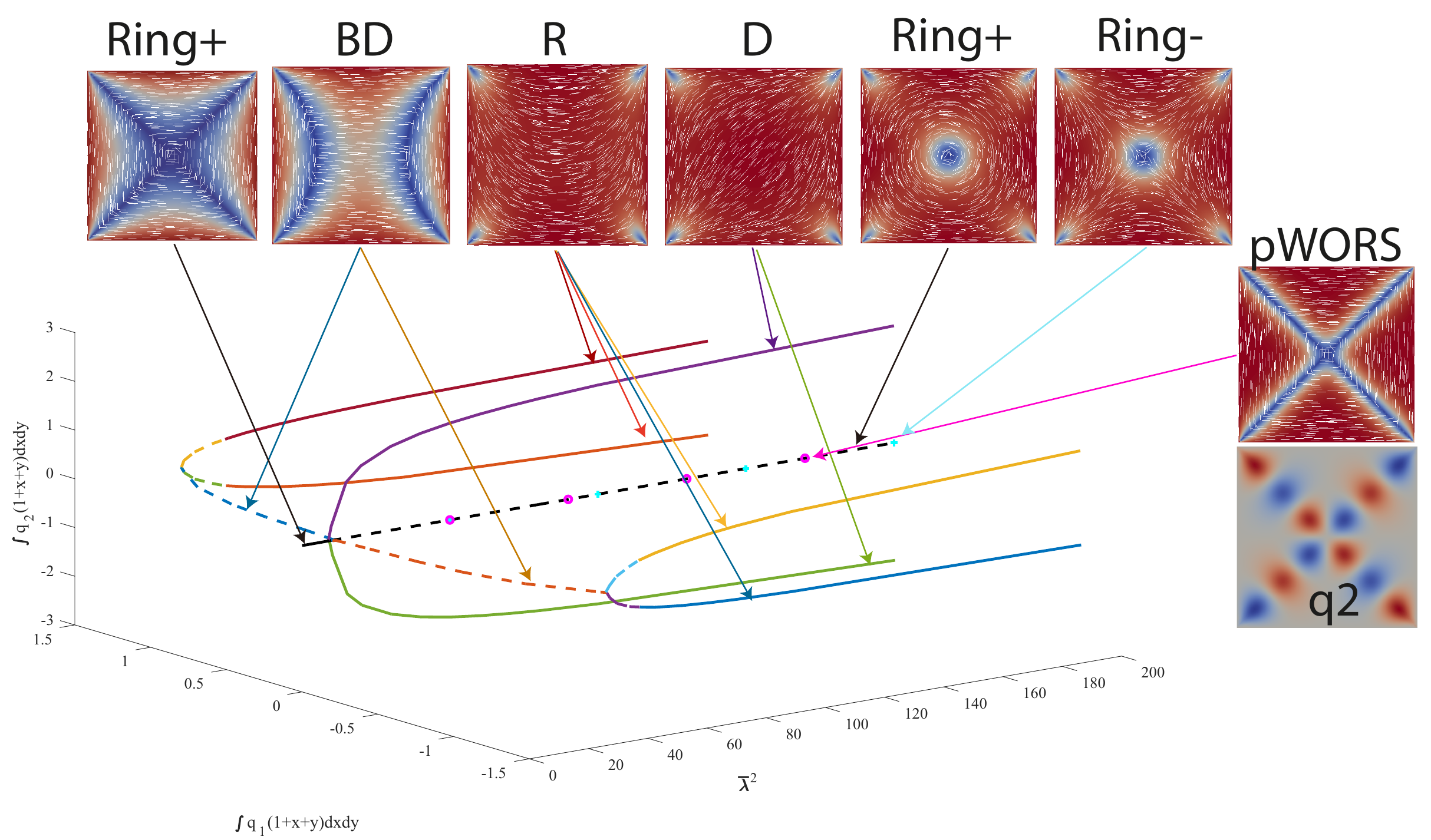}
    \end{subfigure}
    \vspace{2em}
    \begin{subfigure}{0.4\textwidth}
        \centering
        \includegraphics[width=\columnwidth]{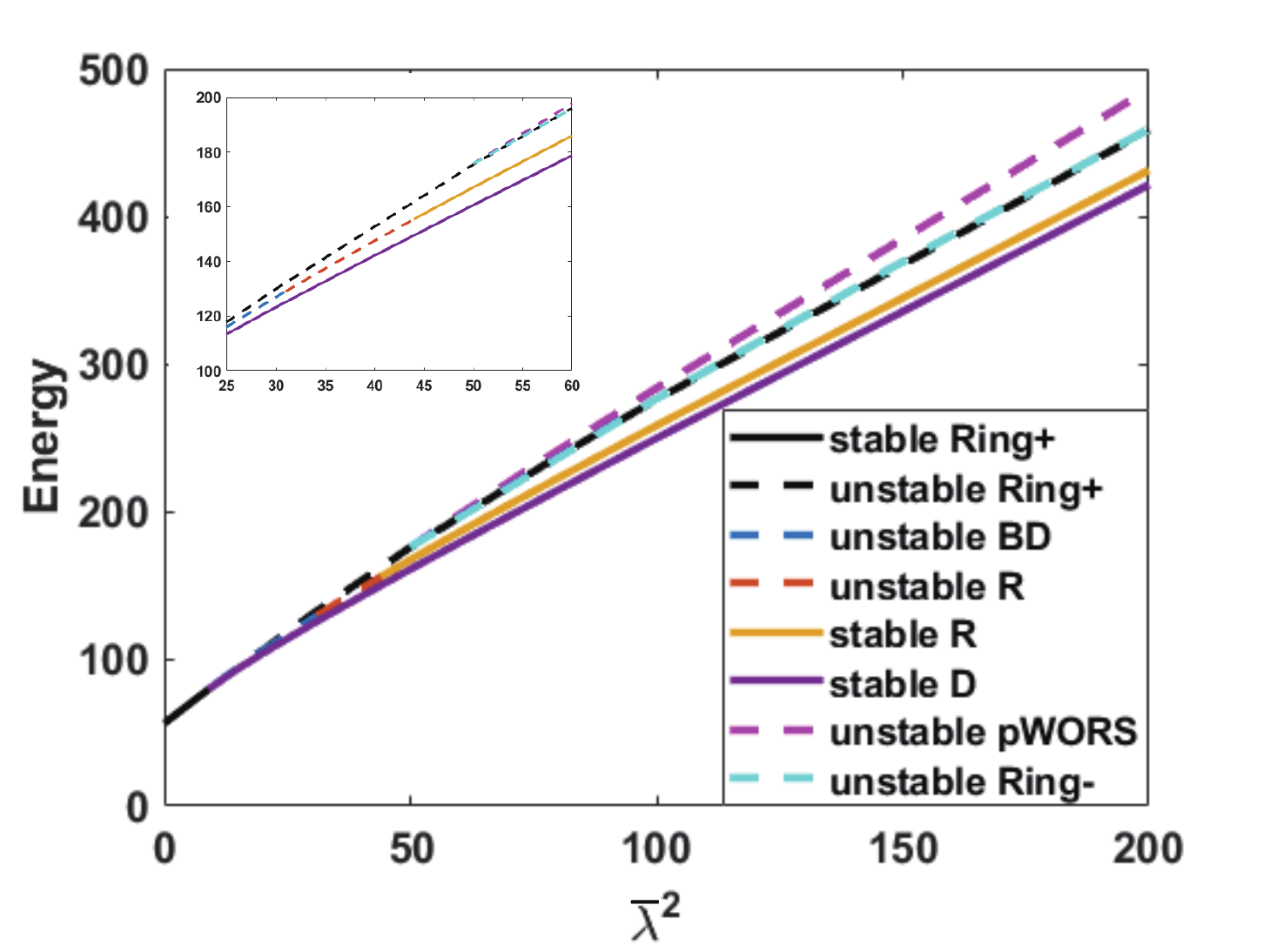}
    \end{subfigure}
    \begin{subfigure}{0.55\textwidth}
        \centering
        \includegraphics[width=\columnwidth]{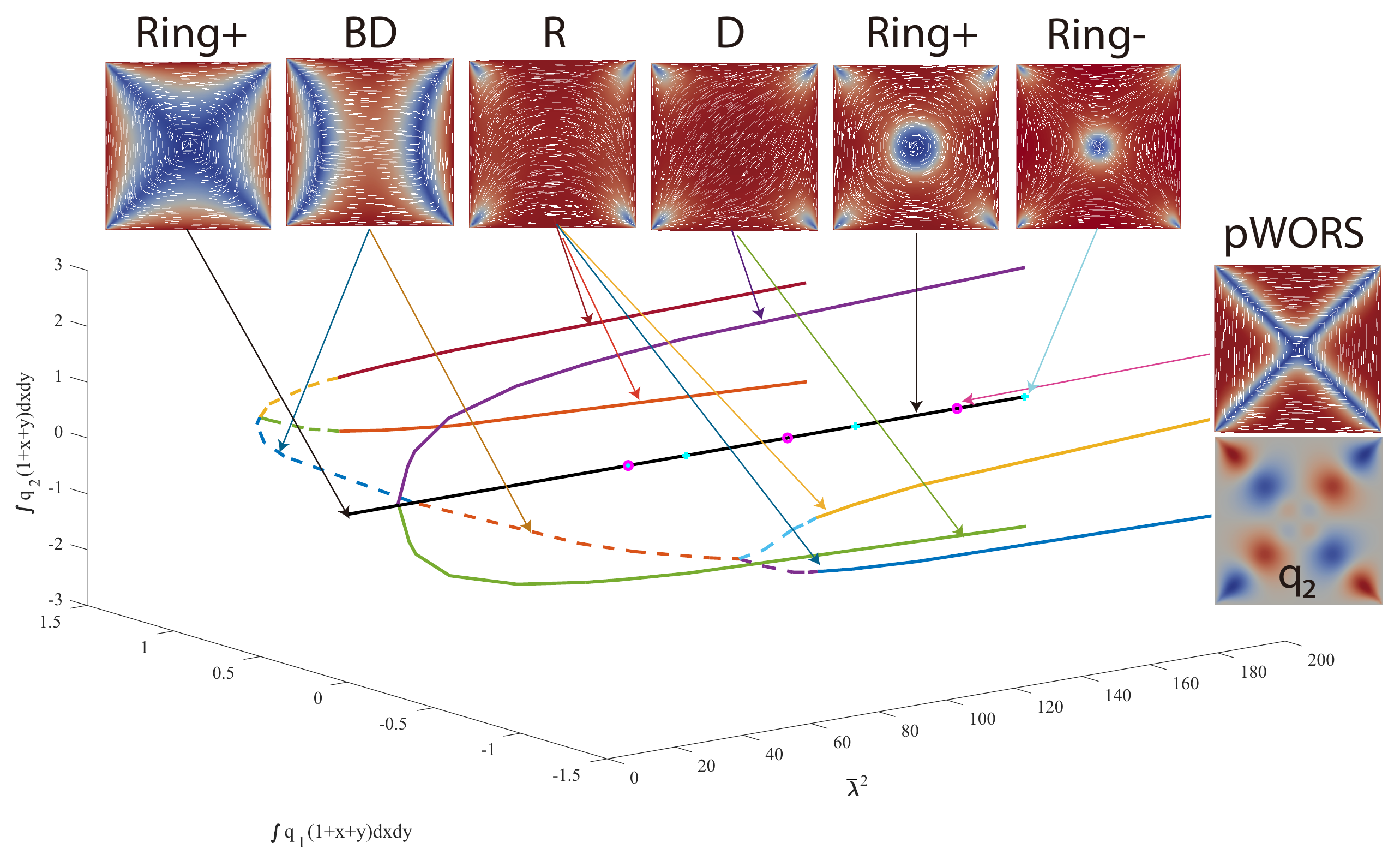}
    \end{subfigure}
    \begin{subfigure}{0.4\textwidth}
        \centering
        \includegraphics[width=\columnwidth]{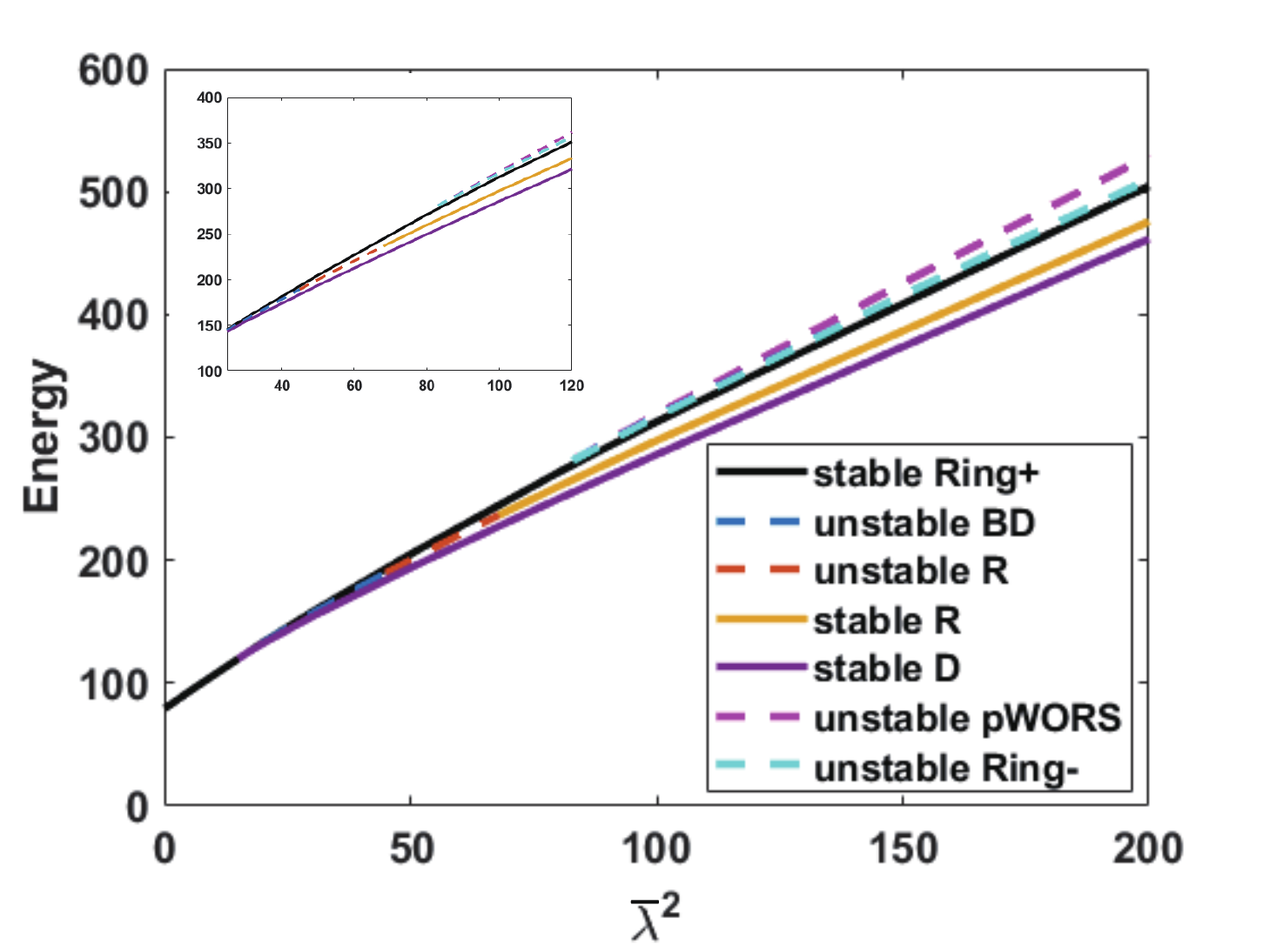}
    \end{subfigure}
\end{figure}
\begin{figure}
\centering
    \begin{subfigure}{0.55\textwidth}
        \centering
        \includegraphics[width=\columnwidth]{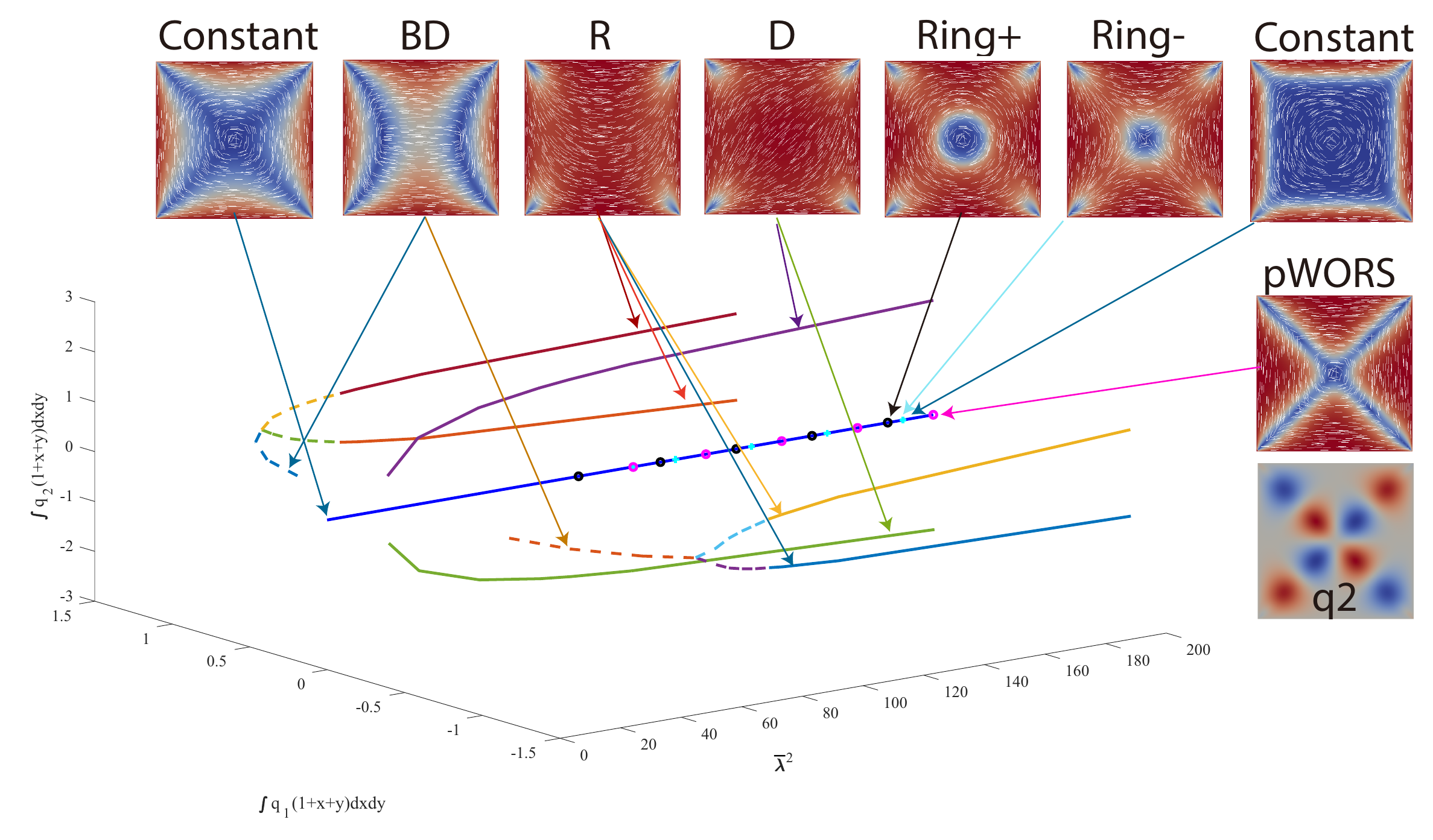}
    \end{subfigure}
    \vspace{2em}
    \begin{subfigure}{0.4\textwidth}
        \centering
        \includegraphics[width=\columnwidth]{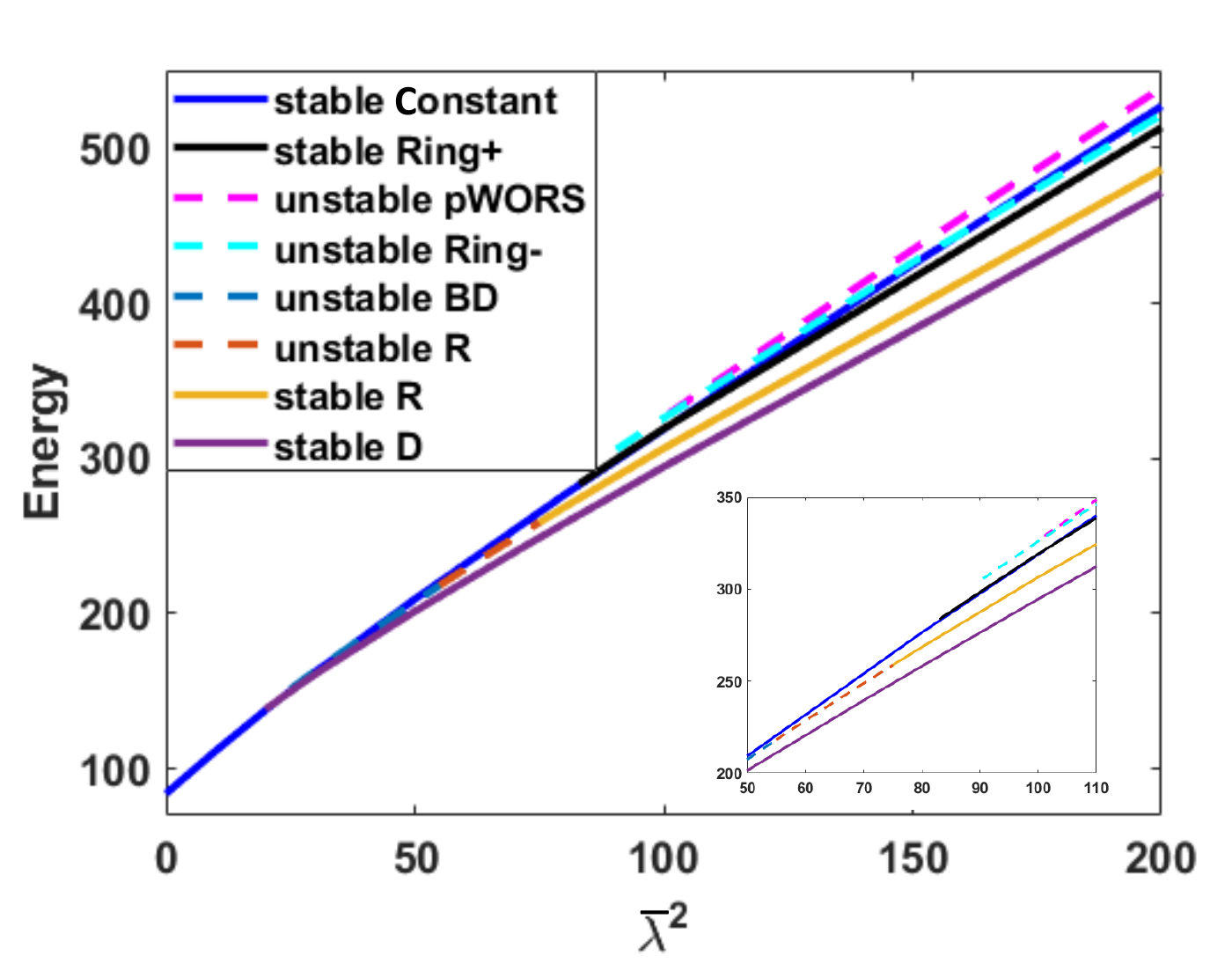}
    \end{subfigure}
    \begin{subfigure}{0.55\textwidth}
        \centering
        \includegraphics[width=0.9\columnwidth]{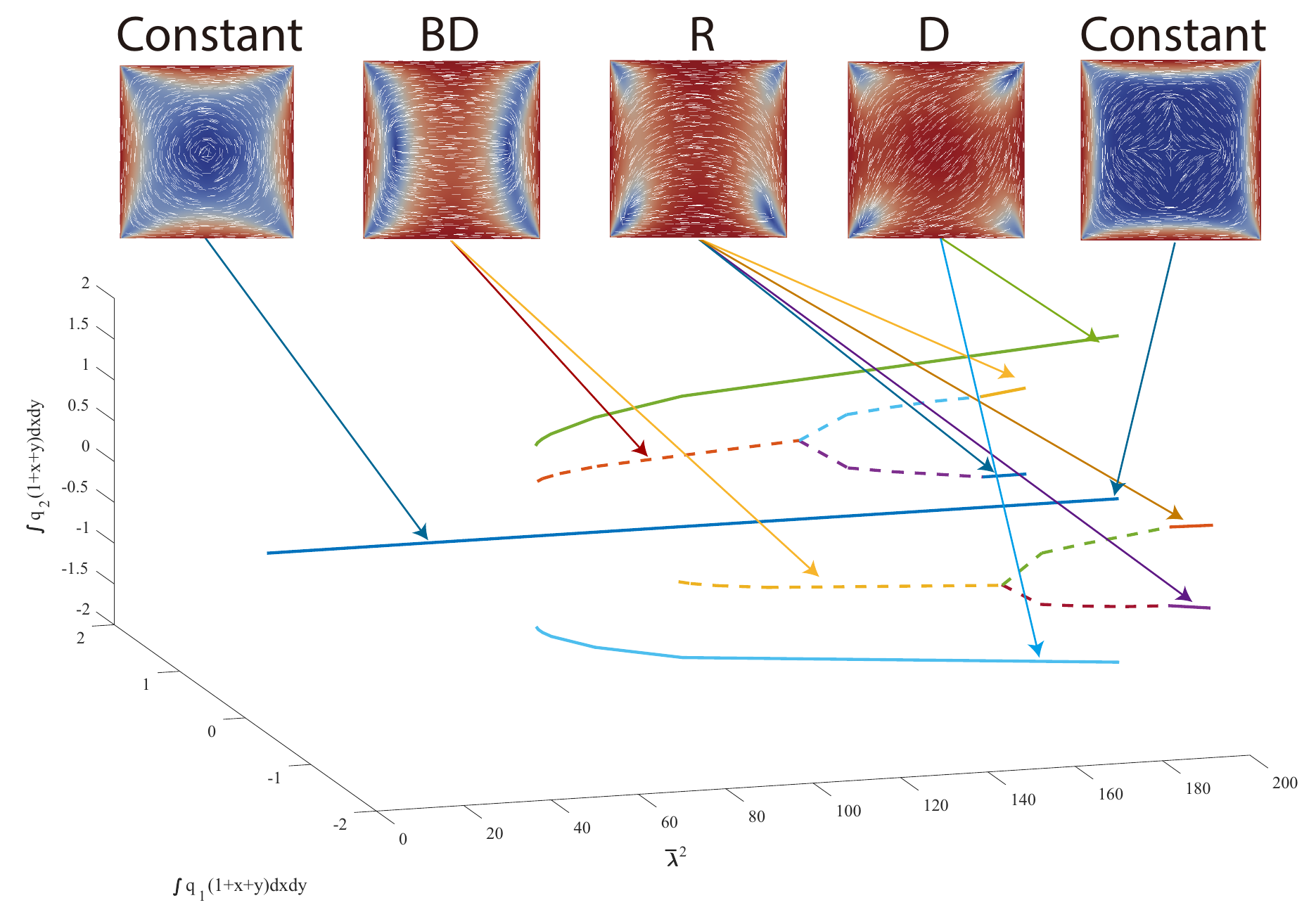}
    \end{subfigure}
    \begin{subfigure}{0.4\textwidth}
        \centering
        \includegraphics[width=\columnwidth]{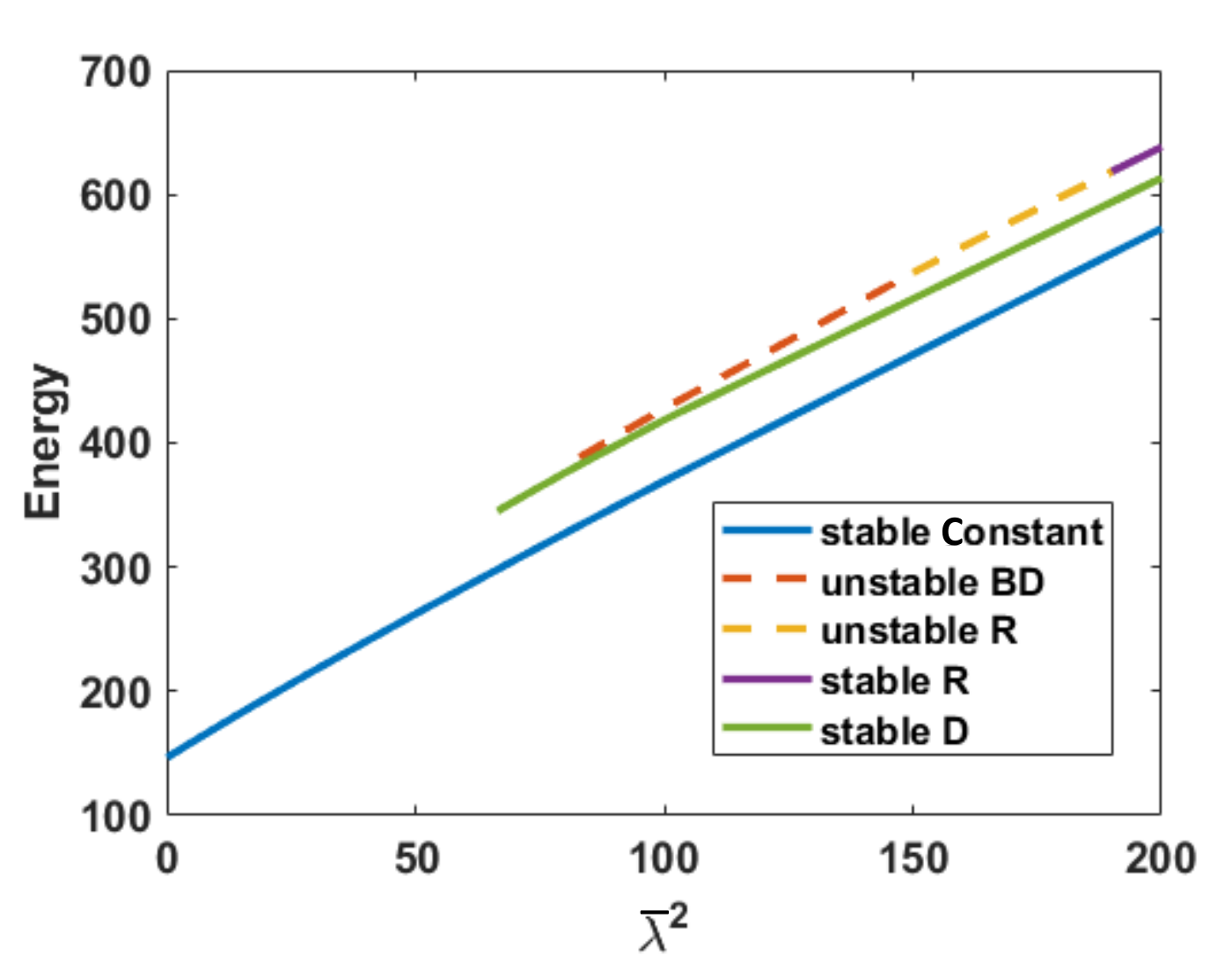}
    \end{subfigure}
        \caption{Bifurcation diagrams for the LdG model in square domain with $L_2 = 0$, $1$, $2.6$, $3$, and $10$ from top to bottom. Left: plot of $\int q_{1}\left(1+x+y\right)\textrm{d}x\textrm{d}y$, $\int q_{2}\left(1+x+y\right)\textrm{d}x\textrm{d}y$ verses $\bar{\lambda}^2$. Right: plot of the energy $J-\int_{\Omega}\min f_b\,\mathrm{dA}$ verses $\bar{\lambda}^2$.}
        \label{fig:bifurcation_diagram_10}
\end{figure}

 Consider the case $L_2 = 0$. For $\lambda < \lambda^*$, there is the unique $WORS$. For $\lambda = \lambda^*$, the stable $WORS$ bifurcates into an unstable $WORS$, and two stable $D$ solutions. When $\lambda = \lambda^{**}>\lambda^*$, the unstable $WORS$ bifurcates into two unstable $BD$, which are featured by isotropic lines or defect lines localised near a pair of opposite square edges. When $\lambda = \lambda^{***}>\lambda^{**}$, unstable $Ring^\pm$ solutions appear simultaneously. When $L_2=0$, the $Ring^+$ and $Ring^-$ solution have the same energy. Each unstable $BD$ further bifurcates into two unstable $R$ solutions. As $\lambda$ increases, the unstable $R$ solutions gain stability. The $WORS$ has the highest energy amongst the numerically computed solutions for $L_2=0$, for large $\lambda$.
For $L_2 = 1$,  the $WORS$ ceases to exist and the unique solution is the stable $Ring^+$ solution. At the first bifurcation point $\lambda = \lambda^*$, the $Ring^+$ solution bifurcates into an unstable $Ring^+$ and two stable $D$ solutions. At the second bifurcation point, $\lambda = \lambda^{**}>\lambda^*$, the unstable $Ring^+$ bifurcates into two unstable $BD$ solutions and for $\lambda = \lambda^{***}>\lambda^{**}$, the unstable $Ring^-$ and unstable $pWORS$ solution branches appear. The $Ring^-$ and $pWORS$ are always unstable and the $Ring^+$ solution has slightly lower energy than the $Ring^-$. The unstable $pWORS$ has higher energy than the unstable $Ring^\pm$ solutions when $\lambda$ is large. 
The solution landscape for $L_2=1$ and $L_2=2.6$ remain unchanged qualitatively however, for $L_2=2.6$, the unique $Ring^+$ solution for small $\lambda$ remains stable for $\bar{\lambda}^2\leq 200$ and the unstable $pWORS$ and $Ring^-$ appear for large $\lambda$. For $L_2 = 3$, the unique stable solution for small $\lambda$ is the $Constant$ solution, which is stable for $\bar{\lambda}^2\leq 200$. We can clearly see that the $Constant$ solution approaches $(q_1,q_2, q_3) \to (0,0, s_+/3)$ as $\lambda$ gets large. The $BD$ and $D$ solution branches, which were previously connected to the unique small $\lambda$ solution branch, are now disconnected from the stable $Constant$ solution branch. For $\lambda = \lambda^*$, the stable $Ring^+$ appears and for $\lambda = \lambda^{**}>\lambda^*$, the unstable $Ring^-$ and $pWORS$ appear. 
For $L_2 = 10$, the $pWORS$ and $Ring^\pm$ states disappear, and the $Constant$ solution does not bifurcate to any known states. The $BD$ and $D$ branches are disconnected from the stable $Constant$ branch. As we perform a decreasing $\bar{\lambda}^2$ sweep for the $D$ or $BD$ solution branches, we cannot find a $D$ or $BD$ solution for $\lambda<\lambda^{D}$ or $\lambda<\lambda^{BD}$, for small $\lambda^{D}$ and $\lambda^{BD}$. The $Constant$ solution has lower energy than the $R$ and $D$ solutions for large $\lambda$, as suggested by the estimates in Section~\ref{sec:lambdainfty}. 
For much larger values of $L_2$, we only numerically observe the $Constant$ solution branch.

To summarise, the primary effect of the anisotropy parameter, $L_2$, is on the unique stable solution for small $\lambda$. The elastic anisotropy destroys the cross structure of the $WORS$, and also enhances the stability of the $Ring^+$ and $Constant$ solutions. A further interesting feature for large $L_2$, is the disconnectedness of the $D$ and $R$ solution branches from the parent $Constant$ solution branch. This indicates novel hidden solutions for $L_2$, which may have different structural profiles to the discussed solution branches, and this will be investigated in greater detail, in future work.

In the next proposition, we prove a stability result which gives partial insight into the stabilising effects of positive $L_2$. Let $(q_1,q_2,q_3)$ be an arbitrary critical point of the energy functional (\ref{funcq123}).  As is standard in the calculus of variations, we say that a critical point is locally stable if the associated second variation of the energy (\ref{funcq123}) is positive for all admissible perturbations, and is unstable if there exists an admissible perturbation for which the second variation is negative. To this end, we consider perturbations of the form $\Qvec+\epsilon\mathbf{V}$, where $\mathbf{V}$ vanishes at the boundary, $\pp\Omega$. In the following proposition, we prove the stability of these critical points with respect to two classes of admissible perturbations $\mathbf{V}$, for large $L_2$. 

\begin{proposition}
For $L_2\geq\frac{\lambda^2}{L}c(A,B,C,\Omega)$, where $c$ is some constant depending only on $A,B,C$ and $\Omega$, the critical points of the energy functional (\ref{funcq123}) in the restricted admissible space
\begin{gather}
\mathcal{A}_*=\{(q_1,q_2,q_3)\in\mathcal{A}_0 :\int_\Omega|\nabla q_1|^2\leq M_1(A,B,C),\int_\Omega|\nabla q_2|^2\leq M_2(A,B,C),\int_\Omega|\nabla q_3|^2\leq M_3(A,B,C)\} ,  
\end{gather}
are locally stable with respect to the perturbations
\begin{gather}
\mathbf{V}(x,y)=v_1(x,y)(\xhat\otimes\xhat-\yhat\otimes\yhat)+v_2(x,y)(\xhat\otimes\yhat+\yhat\otimes\xhat), \label{V12}
\end{gather}
and
\begin{gather}
\mathbf{V}(x,y)=v_3(x,y)(2\zhat\otimes\zhat-\xhat\otimes\xhat-\yhat\otimes\yhat). \label{V3}
\end{gather}
\end{proposition}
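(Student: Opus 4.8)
The plan is to compute the second variation of $J$ at the given critical point $(q_1,q_2,q_3)$ and show it is strictly positive for each of the two admissible perturbation classes, provided $L_2$ is large relative to $\lambda^2/L$. Writing the perturbation in components as $(v_1,v_2,v_3)$ with $v_i=0$ on $\pp\Omega$, and using that $f_{el}$ is a quadratic form in the gradients while $f_b$ is smooth, the second variation splits as
\begin{align}
\delta^2 J[(v_1,v_2,v_3)] = \int_\Omega f_{el}(v_1,v_2,v_3)\,\mathrm{dA} + \frac{\lambda^2}{L}\int_\Omega \sum_{i,j}\frac{\pp^2 f_b}{\pp q_i\pp q_j}v_iv_j\,\mathrm{dA}.
\end{align}
The elastic term is exactly $f_{el}$ evaluated on the perturbation by quadratic homogeneity, whereas the bulk term is the Hessian of $f_b$ at $(q_1,q_2,q_3)$ contracted with the perturbation. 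The point of restricting to the two special classes (\ref{V12}) and (\ref{V3}) is that they decouple the divergence penalty cleanly and let the anisotropy enhance coercivity.

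First I would treat the elastic part using the representation (\ref{pos}). For the planar perturbation (\ref{V12}), i.e. $v_3\equiv 0$, the divergence contribution is $\frac{L_2}{2}((v_{1,x}+v_{2,y})^2+(v_{2,x}-v_{1,y})^2)$; expanding and integrating, the cross terms combine into the Jacobian $v_{1,x}v_{2,y}-v_{1,y}v_{2,x}$, which is a null Lagrangian and integrates to zero since $v_1,v_2$ vanish on $\pp\Omega$. Hence
\begin{align}
\int_\Omega f_{el}(v_1,v_2,0)\,\mathrm{dA} = \left(1+\frac{L_2}{2}\right)\int_\Omega\left(|\nabla v_1|^2 + |\nabla v_2|^2\right)\mathrm{dA}.
\end{align}
For the out-of-plane perturbation (\ref{V3}), i.e. $v_1=v_2\equiv 0$, the divergence term reduces to $\frac{L_2}{2}|\nabla v_3|^2$, giving
\begin{align}
\int_\Omega f_{el}(0,0,v_3)\,\mathrm{dA} = \left(3+\frac{L_2}{2}\right)\int_\Omega|\nabla v_3|^2\,\mathrm{dA}.
\end{align}
In both cases the elastic coercivity grows linearly in $L_2$.

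Next I would bound the bulk part from below. The entries of $f_b''$ are quadratic polynomials in $(q_1,q_2,q_3)$, so by the embedding and H\"older argument already used in Proposition \ref{prop2}, the $W^{1,2}$ bounds defining $\mathcal{A}_*$ yield a uniform estimate $\|f_b''\|_{L^2(\Omega)}\leq c_0(A,B,C,\Omega)$, independent of $L_2$. Combining H\"older's inequality, the Sobolev embedding $W^{1,2}(\Omega)\hookrightarrow L^4(\Omega)$, and the Poincar\'e inequality (valid since the perturbations vanish on $\pp\Omega$), and absorbing the cross term $\frac{\pp^2 f_b}{\pp q_1\pp q_2}v_1v_2$ via Young's inequality, I obtain for each class
\begin{align}
\frac{\lambda^2}{L}\int_\Omega \sum_{i,j}\frac{\pp^2 f_b}{\pp q_i\pp q_j}v_iv_j\,\mathrm{dA} \geq -\frac{\lambda^2}{L}\,c(A,B,C,\Omega)\int_\Omega|\nabla\mathbf{v}|^2\,\mathrm{dA},
\end{align}
where $|\nabla\mathbf{v}|^2$ denotes $|\nabla v_1|^2+|\nabla v_2|^2$ in the first case and $|\nabla v_3|^2$ in the second. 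Adding the elastic lower bounds, $\delta^2 J$ is bounded below by $(1+\frac{L_2}{2}-\frac{\lambda^2}{L}c)$ (respectively $(3+\frac{L_2}{2}-\frac{\lambda^2}{L}c)$) times $\int_\Omega|\nabla\mathbf{v}|^2$, which is strictly positive once $L_2\geq\frac{\lambda^2}{L}c(A,B,C,\Omega)$, giving the claim.

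The main obstacle I anticipate is making the bulk estimate genuinely uniform, with a constant depending only on $A,B,C$ and $\Omega$: one must confirm that the $L^2$ control on $f_b''$ furnished by $\mathcal{A}_*$ is independent both of the particular critical point and of $L_2$, so that it cannot silently reabsorb the very $L_2$ we are trying to exploit. The null-Lagrangian cancellation for the planar class is the one structural point that must be checked with care, since it is precisely what converts the divergence penalty into the clean $(1+L_2/2)$ prefactor; the out-of-plane case is comparatively immediate.
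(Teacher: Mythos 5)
Your proposal is correct and follows essentially the same route as the paper: compute the second variation, use the null-Lagrangian cancellation of the Jacobian cross term $v_{1,x}v_{2,y}-v_{1,y}v_{2,x}$ for the planar class, obtain elastic coercivity growing linearly in $L_2$, and control the bulk Hessian term via the $\mathcal{A}_*$ bounds together with H\"older, the Sobolev embedding into $L^4$, and the Poincar\'e inequality, exactly as in the paper's argument. The only cosmetic differences are an overall factor of $2$ in the second variation and that the paper states its final lower bound in terms of $\|v\|_{L^4}^2$ rather than $\int_\Omega|\nabla\mathbf{v}|^2$, neither of which affects the conclusion.
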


\begin{proof}
To begin, consider the admissible perturbation (\ref{V3}). The second variation of the LdG energy (\ref{funcq123}) with respect to this perturbation is given by
\begin{gather}
\delta^2\mathcal{F}[v_3]=\int_\Omega(6+L_2)|\nabla v_3|^2 +\frac{\lambda^2v_3^2}{L}\left\{6A-12Bq_3+72Cq_3^2+6C(2q_1^2+2q_2^2+6q_3^2)\right\}\mathrm{dA},
\end{gather}
where $v_3\in W_0^{1,2}(\Omega)$. By an application of the Poincar\'{e} inequality and use of the relevant embedding theorem as in Proposition \ref{prop2}, there exists some constant $c_0$, which depends on the domain $\Omega$, such that
\begin{align}
\int_{\Omega}(6+L_2)|\nabla v_3|^2\,\mathrm{dA}
&\geq L_2c_0(\Omega)||v_3||_{L^4(\Omega)}^2
\end{align}
We will now restrict ourselves to studying critical points in the admissible space, $\mathcal{A}_*$, which respect the Dirichlet energy bounds for the scalar order parameters $q_1,q_2,q_3$. By applications of the H\"{o}lder inequality and further applications of the embedding theorem and Poincar\'{e} inequality in $\mathcal{A}_*$, we have that there exists some constant $c_1$ depending only on $A,B,C$ and $\Omega$ such that
\begin{align}
\delta^2\mathcal{F}[v_3]&\geq L_2c_0||v_3||_{L^4(\Omega)}^2-\frac{\lambda^2}{L}c_1(A,B,C,\Omega)||v_3||_{L^4(\Omega)}^2\\
&=\left(L_2c_0-\frac{\lambda^2}{L}c_1\right)||v_3||_{L^4(\Omega)}^2 \label{v3inequality}
\end{align}
The quantity (\ref{v3inequality}) is positive if, and only if, $L_2\geq\lambda^2c/L$ where $c:=c_1/c_0$. Similarly, we may consider the admissible perturbation (\ref{V12}). The second variation of the energy (\ref{funcq123}) with respect to this perturbation is given by
\begin{gather}
\delta^2\mathcal{F}[v_1,v_2]=\int_\Omega (2+L_2)|\nabla v_1|^2+(2+L_2)|\nabla v_2|^2+2L_2(v_{1,x}v_{2,y}-v_{1,y}v_{2,x})\,\mathrm{dA} \nonumber\\
+\frac{\lambda^2}{L}\int_\Omega (2A+4Bq_3)(v_1^2+v_2^2)+2C(2q_1^2+2q_2^2+6q_3^2)(v_1^2+v_2^2)+8C(q_1v_1+q_2v_2)^2\,\mathrm{dA}
\end{gather}
Since $v_1,v_2\in W_0^{1,2}(\Omega)$, the term
\begin{gather}
v_{1,x}v_{2,y}-v_{1,y}v_{2,x}
\end{gather}
is a null lagrangian and hence, applying the same reasoning as before, we have that there exist constants $\xi_0,\xi$, depending only on $A,B,C$ and $\Omega$ such that 
\begin{gather}
    \delta^2\mathcal{F}[v_1,v_2]\geq\left(L_2\xi_0-\frac{\lambda^2}{L}\xi_1\right)(||v_1||_{L^4(\Omega)}^2+||v_2||_{L^4(\Omega)}^2). \label{v1v2inequality}
\end{gather}
The right hand side of (\ref{v1v2inequality}) is positive if, and only if, $L_2\geq\lambda^2c/L$ where $c:=\xi_1/\xi_0$, thus completing the proof.
\end{proof}

\section{Conclusions and discussions}
\label{sec:conclusions}
We study the effects of elastic anisotropy on stable nematic equilibria on a square domain, with tangent boundary conditions, primarily focusing on the interplay between the square edge length $\lambda$ and the elastic anisotropy $L_2$. We study LdG critical points with three degrees of freedom: $q_1$ and $q_2$ which measure the degree of nematic order in the plane of the square, and $q_3$ which measures the degree of out-of-plane order in terms of the eigenvalue about the $z$-axis. We use symmetry arguments on an $1/8$-th of the square domain, to construct a LdG critical point for which $q_1$ vanishes on the square diagonals, and $q_2$ vanishes on the coordinate axes. The $WORS$ is a special class of these critical points for $L_2=0,$ with $q_2 \equiv 0$ on the square domain. In particular, $q_2$ cannot be identically zero for this LdG critical point, for $L_2 \neq 0$. This symmetric critical point is the unique LdG energy minimizer for $\lambda$ small enough, as follows from a uniqueness proof. There are different classes of these symmetric critical points for large $\lambda$. We perform asymptotic studies in the small $\lambda$ and small $L_2$ limit, and large $L_2$ limits, and provide good asymptotic approximations for the novel $Ring^+$ and $Constant$ solutions, both of which are stable for small $\lambda$ and moderate $L_2$, and large $\lambda$ and relatively large values of $L_2$, when these solutions exist. We also provide asymptotic expansions for the novel unstable $pWORS$ solution branches, featured by alternating zeroes of $q_2$ on the square diagonals. The $WORS$, $Ring^{\pm}$, $Constant$ and $pWORS$ belong to the class of symmetric critical points constructed in Proposition~\ref{prop:forever_critical}.
The large $\lambda$-picture for $L_2 \neq 0$ is qualitatively similar to the $L_2 = 0$ case, with the stable diagonal, $D$ and rotated, $R$ solutions. The notable difference is the emergence of the competing stable $Constant$ solution for large $L_2$, which is energetically preferable to the $D$ and $R$-solutions, for large $L_2$ and large $\lambda$. This suggests that for highly anisotropic materials with large $L_2$, the experimentally observable state is the $Constant$ solution with $q_1^2 + q_2^2 \approx 0$ in the square interior. In other words, the $Constant$ state is almost perfectly uniaxial, with uniaxial symmetry along the $z$-direction, and will offer highly contrasting optical properties compared to the $D$ and $R$ solutions. This offers novel prospects for multistability for highly anisotropic materials.

Another noteworthy feature is the stabilising effect of $L_2$, as discussed in Section~\ref{sec:bifurcations}. The $Ring^+$ solution has a central point defect in the square interior and is unstable for $L_2= 0$. However, it gains stability for moderate values of $\lambda$, as $L_2$ increases, and ceases to exist for very large positive values of $L_2$. We note some similarity with recent work on ferronematics \cite{hanwaltonharrismajumdar2021}, where the coupling between the nematic director and an induced spontaneous magnetisation stabilises interior nematic point defects, with $L_2= 0$. It remains an open question as to whether elastic anisotropy or coupling energies (perhaps with certain symmetry and invariance properties) can stabilise interior nematic defects, and help us tune the locations, dimensionality and multiplicity of defects for tailor-made applications.

\section*{Acknowledgments}
AM ackowledges support from the University of Strathclyde New Professors Fund and a University of Strathclyde Global Engagement Grant. AM is also supported by a Leverhulme International Academic Fellowship. AM and LZ acknowledge the support from Royal Society Newton Advanced Fellowship. LZ acknowledges support from the National Natural Science Foundation of China No. 12050002. YH acknowledges support from a Royal Society Newton International Fellowship.

\bibliographystyle{unsrt}
\bibliography{bibliography1-2}

\end{document}